\newcommand{\C}{C_{-\infty}^\ast}
\newcommand{\IN}{\mathbb{N}}
\newcommand{\IZ}{\mathbb{Z}}
\newcommand{\IR}{\mathbb{R}}
\newcommand{\IC}{\mathbb{C}}
\newcommand{\injrad}{\operatorname{inj-rad}}
\newcommand{\Rm}{\operatorname{Rm}}
\newcommand{\IB}{\mathfrak{B}}
\newcommand{\Tr}{\mathfrak{Tr}}
\newcommand{\supp}{\operatorname{supp}}
\newcommand{\id}{\operatorname{id}}
\newcommand{\card}{\#}
\newcommand{\trace}{\operatorname{tr}}
\newcommand{\ch}{\operatorname{ch}}
\newcommand{\vol}{\operatorname{vol}}
\newcommand{\ind}{\operatorname{ind}}
\newcommand{\op}{\mathrm{op}}
\newcommand{\Cpoln}{C^\ast_{\mathrm{pol},n}}
\newcommand{\Cpol}{C^\ast_{\mathrm{pol}}}
\newcommand{\HCocont}{H \! C^\mathrm{cont}}
\newcommand{\HPucont}{H \! P_\mathrm{cont}}
\newcommand{\PHCocont}{P \! H \! C^\mathrm{cont}}
\newcommand{\PHC}{P \! H \! C}
\newcommand{\HC}{H \! C}
\newcommand{\HP}{H \! P}
\newcommand{\Cl}{C^{\lambda}}
\newcommand{\Huf}{H^\mathrm{uf}}
\newcommand{\Hulf}{H^\mathrm{ulf}}
\newcommand{\Hufpol}{H^{\mathrm{pol}}}
\newcommand{\Cuf}{C^\mathrm{uf}}
\newcommand{\Culf}{C^\mathrm{ulf}}
\newcommand{\Cufpol}{C^{\mathrm{pol}}}
\newcommand{\uf}{\mathrm{uf}}
\newcommand{\HbdR}{H_{b, \mathrm{dR}}}
\newcommand{\HcdR}{H_{c, \mathrm{dR}}}
\newcommand{\length}{\mathrm{length}}
\newcommand{\HR}{H \! R}
\newcommand{\CR}{C \! R}
\newcommand{\HX}{H \! X}
\newcommand{\CX}{C \! X}
\newcommand{\uRoe}{\IC_{-\infty}^\ast}
\newcommand{\Frechet}{Fr\'{e}chet }
\newcommand{\Poincare}{Poincar\'{e} }
\newcommand{\Folner}{F{\o}lner }
\newcommand{\Spakula}{\v{S}pakula }
\newcommand{\alg}{\mathrm{alg}}
\newcommand{\interior}{\mathrm{int}}
\newcommand{\WDpol}{W^{\infty,1}_{\Delta,\mathrm{pol}}}
\newcommand{\dist}{\operatorname{dist}}
\newcommand*{\largecdot}{\raisebox{-0.25ex}{\scalebox{1.2}{$\cdot$}}}
\DeclareMathOperator{\hatotimes}{\hat{\otimes}}
\DeclareMathOperator{\barotimes}{\bar{\otimes}}
\newtheorem{thm}{Theorem}[section]
\newtheorem*{thm*}{Theorem}
\newtheorem*{thmA*}{Theorem A}
\newtheorem*{thmB*}{Theorem B}
\newtheorem*{applC*}{Application C}
\newtheorem*{applD*}{Application D}
\newtheorem*{applE*}{Application E}
\newtheorem*{thmF*}{Theorem F}
\newtheorem*{corG*}{Corollary G}
\newtheorem*{thmH*}{Theorem H}
\newtheorem*{roesthm*}{Roe's Index Theorem}
\newtheorem*{atiyahsingerthm*}{Atiyah--Singer Index Theorem}
\newtheorem{cor}[thm]{Corollary}
\newtheorem*{cor*}{Corollary}
\newtheorem{lem}[thm]{Lemma}
\newtheorem{prop}[thm]{Proposition}
\newtheorem{question}[thm]{Question}
\theoremstyle{definition}
\newtheorem{rem}[thm]{Remark}
\newtheorem*{rem*}{Remark}
\newtheorem{example}[thm]{Example}
\newtheorem{examples}[thm]{Examples}
\newtheorem*{example*}{Example}
\newtheorem*{examples*}{Examples}
\newtheorem{defn}[thm]{Definition}
\numberwithin{equation}{section}
\def\blfootnote{\gdef\@thefnmark{}\@footnotetext}
\begin{document}

\title{Rough index theory on spaces of\\$\mathclap{\mbox{polynomial growth and contractibility}}$}
\author{Alexander Engel}
\date{}
\maketitle

\vspace*{-3.5\baselineskip}
\begin{center}
\footnotesize{
\textit{
Fakult{\"a}t f{\"u}r Mathematik\\
Universit{\"a}t Regensburg\\
93040 Regensburg, GERMANY\\
\href{mailto:alexander.engel@mathematik.uni-regensburg.de}{alexander.engel@mathematik.uni-regensburg.de}
}}
\end{center}

\vspace*{0.5\baselineskip}
\begin{abstract}
We will show that for a polynomially contractible manifold of bounded geometry and of polynomial volume growth every coarse and rough cohomology class pairs continuously with the $K$-theory of the uniform Roe algebra.

As an application we will discuss non-vanishing of rough index classes of Dirac operators over such manifolds, and we will furthermore get higher-codimensional index obstructions to metrics of positive scalar curvature on closed manifolds with virtually nilpotent fundamental groups.

We will give a computation of the homology of (a dense, smooth subalgebra of) the uniform Roe algebra of manifolds of polynomial volume growth.

%\blfootnote{\textit{Date:} \today.}
\blfootnote{\textit{$2010$ Mathematics Subject Classification.} Primary:\ 58J22; Secondary:\ 46L80,\ 19K56.}
\blfootnote{\textit{Keywords and phrases.} Novikov conjecture, uniform Roe algebra, uniformly finite homology.}
\end{abstract}

\tableofcontents

\section{Introduction}

J.~Roe \cite{roe_coarse_cohomology} invented coarse cohomology which pairs with the $K$-theory of the algebraic Roe algebra and showed a corresponding index theorem. The main Novikov-type question is then whether this pairing is continuous so that it extends to a pairing with the $K$-theory of the completed Roe algebra. Degree zero classes always pair continuously and J.~Roe also characterized the degree one classes that do this. In the degree zero case this leads (in the uniform setting) to his index theorem for amenable spaces \cite{roe_index_1} and in the degree one case to his partitioned manifolds index theorem \cite{roe_partitioning}.

The first goal of this paper is to prove the following theorem. Note that we work in the rough category and not in the coarse category as J.~Roe does, i.e., we pair not with the usual Roe algebra but with the uniform one. This enables us to treat not only coarse cohomology, but also rough cohomology, which leads to the fact that we may also consider, e.g., pairings incorporating so-called \Folner exhaustions.

\begin{thmA*}
Let $M$ be a polynomially $k$-connected manifold of bounded geometry having polynomial volume growth.

Then for all $q \le k$ every element in the coarse cohomology $\HX^q(M)$ and in the rough cohomology $\HR^q(M)$ pairs continuously with $K_\ast(C_u^\ast(M))$.
\end{thmA*}

In the above theorem $C_u^\ast(M)$ denotes the uniform Roe algebra of $M$ and polynomial $k$-connectedness means the following: there is a polynomial $P$ such that for all $i \le k$ every $L$-Lipschitz map $S^i \to M$ can be contracted by a $P(L)$-Lipschitz contraction. Examples are universal covers (equipped with the pull-back metric) of closed manifolds $K$ with virtually nilpotent fundamental group and $\pi_i(K) = 0$ for $2 \le i \le k$.

The top-dimensional case of the above theorem was treated by G.~Yu \cite[Section 4]{yu_cyclic} (for manifolds with subexponential growth and a subexponential rate of contractibility). So the above theorem can be regarded as a generalization of G.~Yu's result to encompass coarse, resp.~rough classes not in the top-dimension.

Combining results of J.~Roe (\cite[Sections 5.4 \& 6.6]{roe_coarse_cohomology} and \cite{roe_hyperbolic}) we conclude that he proved a somewhat orthogonal result to the above theorem: on hyperbolic manifolds coarse cohomology classes always pair against the $K$-theory of the Roe algebra.

The proof of Theorem A is based on the following diagram:
\begin{equation}
\label{eq:main_diag}
\xymatrix{K_\ast^u(M) \ar[rrr]^{\mu^u_\ast} \ar[ddd] & & & K_\ast(C^\ast_u(M)) \ar@{=}[d]\\
& & K^{\mathrm{alg}}_\ast(\IC_{-\infty}^\ast(M)) \ar[d]_{\ch_\ast} \ar[r] & K_\ast(\Cpol(M)) \ar[d]\\
& & \PHC_\ast(\IC_{-\infty}^\ast(M)) \ar[d]_{\chi_\ast} \ar[r] & \PHCocont_\ast(\Cpol(M)) \ar[d]\\
H^\infty_\ast(M) \ar[rr] & & \Huf_\ast(Y) \ar[r] & \Hufpol_\ast(Y)}
\end{equation}

The top horizontal map $K_\ast^u(M) \to K_\ast(C_u^\ast(M))$ is the rough Baum--Connes assembly map defined by \Spakula \cite[Section 9]{spakula_uniform_k_homology}, where $K_\ast^u(M)$ is the uniform $K$-homology of~$M$ (also defined by \Spakula).

$\Cpol(M)$ is a certain dense and smooth subalgebra of $C_u^\ast(M)$, i.e., the $K$-theories of these two algebras coincide. Section \ref{sec:dense_smooth_subalgebras} is dedicated to the definition of $\Cpol(M)$ and to the proof that it is closed under holomorphic functional calculus.

The map $K_\ast^u(M) \to H_\ast^\infty(M)$ is the uniform homological Chern character from uniform $K$-homology into $L^\infty$-simplicial homology of $M$ with complex coefficients\footnote{We triangulate $M$ as a simplicial complex of bounded geometry, which is possible since $M$ has bounded geometry; see \cite[Theorem 2.1]{attie}.} which the author defined \cite{engel_indices_UPDO}, the map $H_\ast^\infty(M) \to \Huf_\ast(Y)$ is the natural map of $L^\infty$-homology into uniformly finite homology of Block and Weinberger, where $Y \subset M$ is a discretization of $M$, and $\Hufpol_\ast(Y)$ will be defined in Section \ref{sec:defn_ufpol_hom} by completing the chain complex of uniformly finite chains in a certain \Frechet topology.

The map $\ch_\ast$ is the usual Chern character from $K$-theory to periodic cyclic homology, and the character map $\chi_\ast$ will be constructed in Section \ref{sec:rough_character}. The two vertical lower right maps are continuous extensions of $\ch_\ast$, resp.~of $\chi_\ast$.

Note that G.~Yu's arguments in \cite{yu_cyclic} and also \cite{yu_zero} are similar to ours in the sense that he constructs there also certain Chern characters.

From \cite{engel_indices_UPDO} it follows that $K_\ast^u(M) \to H^\infty_\ast(M)$ is rationally injective, and it is known that if $M$ is contractible in a uniform sense then $H^\infty_\ast(M) \to \Huf_\ast(Y)$ is an isomorphism (Lemmas~\ref{lemh23wefwe} \& \ref{lemui2323}). So the rough Novikov conjecture (i.e., rational injectivity of the rough assembly map $\mu^u_\ast$) follows from injectivity of $\Huf_\ast(Y) \to \Hufpol_\ast(Y)$.

We will show in Theorem \ref{thm:bounded_below} that under the assumptions of our main theorem the lower horizontal composition $H^\infty_q(M) \to \Hufpol_q(Y)$ will be bounded from below for all $q \le k$, where we use on the space $H^\infty_q(M)$ the $L^\infty$-seminorm and on the space $\Hufpol_q(Y)$ the seminorms from Definition \ref{defn_pol_chains}. In Corollary~\ref{coriso} this will be improved to the fact that the composition $H^\infty_q(M) \to \Hufpol_q(Y)$ is even a topological isomorphism. Let us state this as one of our main theorems:

\begin{thmB*}
Let $M$ be a polynomially $k$-connected manifold of bounded geometry having polynomial volume growth.

Then for every $q \le k$ both of the maps $H_q^\infty(M) \to \Huf_q(Y)$ and $\Huf_q(Y) \to \Hufpol_q(Y)$ are topological isomorphisms.
\end{thmB*}

Index theory enters with \cite{engel_indices_UPDO}: if $P$ is a symmetric and elliptic uniform pseudodifferential operator, then the image of $[P] \in K_\ast^u(M)$ in $H_\ast^\infty(M)$ under the uniform homological Chern character is the \Poincare dual of the cohomological index class $\ind(P) \in \HbdR^\ast(M)$ of $P$, where the latter denotes bounded de Rham cohomology. Note that the special case of Dirac operators is essentially due to J.~Roe \cite{roe_coarse_cohomology} and originally this kind of index theorems go back to Connes--Moscovici \cite{connes_moscovici}.

Theorem A only becomes interesting if we can find interesting coarse cohomology classes to pair with. So let us quickly discuss two examples.

\begin{applC*}
Let $M$ be a Riemannian manifold of bounded geometry.

If $M$ is polynomially contractible (i.e., polynomially $k$-connected for every $k \in \IN$) and has polynomial growth and if $D$ is a Dirac operator over $M$, then $\mu^u_\ast [D] \not= 0 \in K_\ast(C_u^\ast M)$.
\end{applC*}

Since we assume in the above application polynomial contractibility for all $k \in \IN$, this particular conclusion of non-vanishing of $\mu^u_\ast [D]$ already follows from the above cited work of G.~Yu \cite{yu_cyclic}. The conclusion also follows from Gong--Yu \cite[Theorem 4.1]{gong_yu}, since they showed the coarse Baum--Connes conjecture for spaces with subexponential growth.

Application C implies, e.g., a non-existence theorem for metrics of uniform positive scalar curvature:

\begin{applD*}
Let $M$ be a polynomially contractible manifold of bounded geometry and of polynomial volume growth.

Then $M$ does not admit a metric of uniformly positive scalar curvature in its strict quasi-isometry class.
\end{applD*}

Hanke--Pape--Schick \cite{hanke_pape_schick} proved the following theorem: if $M$ is a closed, connected spin manifold with $\pi_2(M) = 0$, and $N \subset M$ is a codimension two submanifold with trivial normal bundle and such that the induced map $\pi_1(N) \to \pi_1(M)$ is injective, then $\alpha(N) \in K_\ast(C^\ast \pi_1(N))$ is an obstruction against the existence of a psc-metric on $M$.

Using Theorem A we will generalize this to higher codimensions, but we will only have the weaker obstruction $\hat{A}(N)$ instead of $\alpha(N)$. We will also have the restriction that $\pi_1(M)$ has to be virtually nilpotent due to the restrictions of our technique, i.e., we need that the universal cover of $M$ satisfies the assumptions of Theorem A.

Note that the case that the codimension of $N$ equals the dimension of $M$ is allowed in our next theorem and is nothing else but the well-known conjecture that no closed aspherical manifold admits a psc-metric.

Refined versions of the following result were obtained by the author some time after this paper was first posted on the arXiv \cite{engel_wrongway}. But the technique used to prove the refined results is different from the one used in this paper.

\begin{applE*}
Let $M$ be a closed, connected manifold with $\pi_1(M)$ virtually nilpotent and $\pi_i(M) = 0$ for $2 \le i \le q$. Assume moreover that $N \subset M$ is a connected submanifold of codimension $q$ and with trivial normal bundle.

If $\hat A(N) \not= 0$ then $M$ does not admit a metric of positive scalar curvature.
\end{applE*}

Our study of Diagram~\eqref{eq:main_diag} goes on by proving in Section~\ref{seciu234e} the following result:

\begin{thmF*}
Let $M$ be a manifold of bounded geometry with polynomial volume growth, and let $Y \subset M$ be a discretization.

Then the character map $\chi$ induces an isomorphism
\[\chi_\ast \colon \PHCocont_\ast(\Cpol(M)) \xrightarrow{\cong} \Hufpol_\ast(Y),\]
where $\ast$ is either $\ast = 0$ on $\PHC$-theory and $\ast = \text{even}$ on homology, or it is $\ast = 1$ on $\PHC$-theory and $\ast = \text{odd}$ on homology.
\end{thmF*}

The above theorem has a lot of theoretical value. Since the coarse index pairings factor through the character map $\chi_\ast$, the injectivity statement in Theorem F means that (at least in the case of polynomial growth) no information is lost by $\chi_\ast$ (since an element in the kernel of $\chi_\ast$ would be non-detectable by J.~Roe's coarse index theory).

Surjectivity of $\chi_\ast$ corresponds to the idea that there exist no superfluous elements in $\Hufpol_\ast(Y)$ which we might detect by pairing with coarse cohomology classes but which do not have index theoretic value since they do not come from operators.

Let us mention a corollary of the combination of Theorem B and how Theorem F is proved (namely by constructing an inverse map which preserves propagation of cycles).

\begin{corG*}
Let $M$ be a polynomially contractible manifold of bounded geometry and with polynomial volume growth.

Then there is a constant $C < \infty$ such that every class in $\PHCocont_\ast(\Cpol(M))$ can be represented by a cycle consisting of operators of propagation at most $C$.
\end{corG*}

Gong--Yu \cite[Theorem 4.1]{gong_yu} proved the coarse Baum--Connes conjecture for manifolds of bounded geometry and with subexponential volume growth by showing that such manifolds are coarsely embeddable. It is reasonable to expect that in this case one can also prove the rough Baum--Connes conjecture, though there is currently no account of this in the literature. It follows that the Chern character $\ch_\ast\colon K_\ast(\Cpol(M)) \to \PHCocont_\ast(\Cpol(M))$ induces an isomorphism after taking the completed tensor product with $\IC$,\footnote{The reference for the completed topological tensor product $\largecdot \barotimes \IC$ and the reason why we must use it here may be found in \cite[Section 5.3]{engel_indices_UPDO}.} since all other outer arrows in Diagram~\eqref{eq:main_diag} are isomorphisms (in the case of the uniform homological Chern character $K_\ast^u(M) \to H_\ast^\infty(M)$ after taking the completed tensor product with $\IC$).

\begin{thmH*}
Let $M$ be a polynomially contractible manifold of bounded geometry and of polynomial volume growth. Assume that the rough Baum--Connes conjecture holds for $M$.

Then we have an isomorphism
\[\ch_\ast\colon K_\ast(C_u^\ast(M)) \barotimes \IC \xrightarrow{\cong} \PHCocont_\ast(\Cpol(M)).\]
\end{thmH*}

The rough Baum--Connes conjecture already provides a computation of the $K$-theory of the uniform Roe algebra. But in the above result the isomorphism is provided by mapping out of the $K$-theory of the uniform Roe algebra, whereas in the rough Baum--Connes conjecture the assembly map maps into the $K$-theory of the uniform Roe algebra and it is usually hard to invert the assembly map. So the above result is better suited to study particular elements of $K_\ast(C_u^\ast(M))$.

\paragraph{Acknowledgements}

I would like to thank Bernhard Hanke, Nigel Higson and Thomas Schick for valuable discussions with respect to this paper, and I would also like to thank Bernd Ammann, Ulrich Bunke, Clara Löh and Matthias Nagel for helpful discussions surrounding the ideas of this article and the techniques used in it. Furthermore, I would like to thank John Roe who provided to me a copy of the Ph.D.\ thesis of his student Boris Mavra. I thank Markus Land, Malte Pieper and Timm von Puttkamer from Bonn with whom I had helpful discussions during the PIMS Symposium on Geometry and Topology of Manifolds 2015, and Guillermo Corti{\~{n}}as, Joachim Cuntz and Tim Riley for answering questions related to this paper. Finally, I thank the anonymous referee for his or her comments.

I acknowledge the financial support from SFB 1085 ``Higher Invariants'' at the University of Regensburg, and from the Research Fellowship EN 1163/1-1 ``Mapping Analysis to Homology'', which are both financed by the DFG (Deutsche Forschungsgemeinschaft).

\section{Smooth subalgebras of the uniform Roe algebra}

\subsection{Manifolds of bounded geometry}

\begin{defn}
We will say that a Riemannian manifold $M$ has \emph{bounded geometry}, if
\begin{itemize}
\item the curvature tensor and all its derivatives are bounded, i.e., $\| \nabla^k \Rm (x) \| < C_k$ for all $x \in M$ and $k \in \IN_0$, and
\item the injectivity radius is uniformly positive, i.e., $\injrad_M(x) > \varepsilon > 0$ for all $x \in M$.
\end{itemize}
If $E \to M$ is a vector bundle with a metric and compatible connection, we say that $E$ has \emph{bounded geometry}, if the curvature tensor of $E$ and all its derivatives are bounded.
\qed
\end{defn}

We will now give the definition of uniform $C^\infty$-spaces together with a local characterization on manifolds of bounded geometry. The interested reader is refered to, e.g., \cite[Section 2]{roe_index_1} or \cite[Appendix A1.1]{shubin} for more information.

\begin{defn}[$C^r$-boundedness]
Let $e \in C^\infty(E)$. We will say that $e$ is \emph{$C^r$-bounded}, if $\| \nabla^i e \|_\infty < C_i$ for all $0 \le i \le r$.
\qed
\end{defn}

If $E$ and $M$ both have bounded geometry, being $C^r$-bounded is equivalent to the statement that in every normal coordinate chart and with respect to synchronous framings we have $|\partial^\alpha e_i(y)| < C_\alpha$ for every multiindex $\alpha$ with $|\alpha| \le r$ (where the constants $C_\alpha$ are independent of the chart and framing).

\begin{defn}[Uniform $C^\infty$-spaces]
\label{defn:uniform_frechet_spaces}
Let $E$ be a vector bundle of bounded geometry over $M$. We will denote the \emph{uniform $C^r$-space} of all $C^r$-bounded sections of $E$ by $C_b^r(E)$.

Furthermore, we define the \emph{uniform $C^\infty$-space $C_b^\infty(E)$}
\[C_b^\infty(E) := \bigcap_r C_b^r(E),\]
which is a \Frechet space.
\qed
\end{defn}

Now we get to Sobolev spaces on manifolds of bounded geometry. Much of the following material is again from \cite[Section 2]{roe_index_1} and \cite[Appendix A1.1]{shubin}.

Let $s \in C^\infty_c(E)$ for some vector bundle $E \to M$ with metric and connection $\nabla$. For $k \in \IN_0$ we define the global $H^k$-Sobolev norm of $s$ by
\begin{equation}\label{eq:sobolev_norm}
\|s\|_{H^k}^2 := \sum_{i=0}^k \int_M \|\nabla^i s(x)\|^2 dx.
\end{equation}

\begin{defn}[Sobolev spaces $H^k(E)$]\label{defn:sobolev_spaces}
Let $E$ be a vector bundle which is equipped with a metric and a connection. The \emph{$H^k$-Sobolev space of $E$} is the completion of $C^\infty_c(E)$ in the norm $\|\largecdot\|_{H^k}$ and will be denoted by $H^k(E)$.
\qed
\end{defn}

If $E$ and $M^m$ both have bounded geometry than the Sobolev norm \eqref{eq:sobolev_norm} is equivalent to the local one given by
\begin{equation}\label{eq:sobolev_norm_local}
\|s\|_{H^k}^2 \stackrel{\text{equiv}}= \sum_{i=1}^\infty \|\varphi_i s\|^2_{H^k(B_{2\varepsilon}(x_i))},
\end{equation}
where the balls $B_{2\varepsilon}(x_i)$ are domains of normal coordinate charts, the subordinate partition of unity $\{\varphi_i\}$ is such that the derivatives are uniformly bounded (i.e., independent of $i$), we have chosen synchronous framings and $\|\largecdot\|_{H^k(B_{2\varepsilon}(x_i))}$ denotes the usual Sobolev norm on $B_{2\varepsilon}(x_i) \subset \IR^m$. This equivalence enables us to define the Sobolev norms for all $k \in \IR$, see Triebel \cite{triebel_2} and Gro{\ss}e--Schneider \cite{grosse_sobolev}.

\begin{thm}[{\cite[Theorem 2.21]{aubin_nonlinear_problems}}]\label{thm:sobolev_embedding}
Let $E$ be a vector bundle of bounded geometry over a manifold $M^m$ of bounded geometry and without boundary.

Then we have for all values $k-r > m/2$ continuous embeddings
\[H^k(E) \subset C^r_b(E).\]
\end{thm}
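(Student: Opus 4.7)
The plan is to reduce to the classical Euclidean Sobolev embedding by exploiting the local characterization \eqref{eq:sobolev_norm_local} of the global $H^k$-norm. The role of bounded geometry is purely to guarantee that every constant appearing in the reduction is uniform in the chart index.

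First, using $\injrad_M > \varepsilon$, I would fix a countable cover of $M$ by normal coordinate balls $B_\varepsilon(x_i)$ such that the doubled balls $B_{2\varepsilon}(x_i)$ still sit inside normal coordinate charts and have a uniformly bounded multiplicity $N$; this packing is standard and uses only a lower bound on the injectivity radius and an upper bound on the Ricci curvature. Then I would take a subordinate partition of unity $\{\varphi_i\}$ and a fattened system $\{\psi_i\}$ with $\psi_i \equiv 1$ on $\supp \varphi_i$, $\supp \psi_i \subset B_{2\varepsilon}(x_i)$, and all derivatives uniformly bounded; bounded geometry guarantees that such a partition of unity exists.

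Second, on each doubled ball I would read off $\psi_i s$ in normal coordinates and a synchronous framing of $E$. Then $\psi_i s$ is compactly supported in $B_{2\varepsilon}(0) \subset \IR^m$, and the standard Euclidean Sobolev embedding gives, for $k-r > m/2$,
\[\|\psi_i s\|_{C^r(B_{2\varepsilon})} \le C_{k,r,m}\,\|\psi_i s\|_{H^k(B_{2\varepsilon})},\]
where $C_{k,r,m}$ is the \emph{same} Euclidean constant for every $i$. Bounded geometry of $M$ and $E$ implies that the metric, its inverse, the Christoffel symbols, and the connection coefficients of $E$ are uniformly $C^\infty$-bounded in these charts and framings, so the intrinsic $C^r$- and $H^k$-norms on $B_{2\varepsilon}(x_i)$ are equivalent to their Euclidean counterparts with constants independent of~$i$.

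Finally, I would assemble the local estimates: at each point $x\in M$ and for each $0 \le j \le r$,
\[\|\nabla^j s(x)\| \le \sum_{i\colon x \in \supp\varphi_i}\|\nabla^j(\varphi_i s)(x)\| \le N \cdot \sup_i \|\varphi_i s\|_{C^r(B_{2\varepsilon}(x_i))},\]
and combining this with the local Sobolev estimate and $\sup_i \|\psi_i s\|_{H^k(B_{2\varepsilon}(x_i))}^2 \le \|s\|_{H^k}^2$ (a consequence of \eqref{eq:sobolev_norm_local} together with the uniform overlap bound) yields $\sup_x \|\nabla^j s(x)\| \le C\|s\|_{H^k}$ for a constant $C$ depending only on $k$, $r$, $m$ and the bounded geometry constants. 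Summing over $0 \le j \le r$ gives the claimed embedding $H^k(E) \subset C^r_b(E)$. The main obstacle is bookkeeping rather than any deep estimate: one has to verify that the constants in the partition of unity, in the equivalence between local Riemannian and Euclidean Sobolev norms on $B_{2\varepsilon}(x_i)$, in the Euclidean Sobolev embedding itself, and in the finite-overlap estimate, can all be chosen independently of the chart index. This is precisely what the bounded geometry hypothesis on $M$ \emph{and} $E$ provides, and once uniformity is secured the argument is a direct pullback of the classical Euclidean result.
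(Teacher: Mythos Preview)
The paper does not prove this theorem at all; it is quoted verbatim from Aubin's book and no argument is supplied. Your outline is the standard proof and is correct: localize via normal coordinates and synchronous framings, apply the Euclidean Sobolev inequality with a chart-independent constant, and reassemble using a partition of unity with uniformly bounded derivatives and finite overlap. The only blemishes are bookkeeping: the switch between $\varphi_i s$ and $\psi_i s$ in the final step is slightly tangled (you apply the local Sobolev estimate to $\psi_i s$ but bound $\|\varphi_i s\|_{C^r}$; this is fine since $\psi_i \equiv 1$ on $\supp\varphi_i$ and $\varphi_i$ has uniformly bounded derivatives, but it should be said), and the inequality $\sup_i \|\psi_i s\|_{H^k(B_{2\varepsilon}(x_i))}^2 \le \|s\|_{H^k}^2$ holds only up to a uniform constant coming from the norm equivalence \eqref{eq:sobolev_norm_local} and the uniform $C^k$-bounds on $\psi_i$. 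Neither point affects the validity of the argument.
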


\subsection{Finite propagation smoothing operators}

We define the space
\[H^\infty(E) := \bigcap_{k \in \IN_0} H^k(E)\]
and equip it with the obvious \Frechet topology. The Sobolev Embedding Theorem tells us now that we have a continuous embedding
\[H^\infty(E) \hookrightarrow C^\infty_b(E).\]

\begin{lem}
The topological dual of $H^\infty(E)$ is given by
\[H^{-\infty}(E) := \bigcup_{k \in \IN_0} H^{-k}(E).\]
\end{lem}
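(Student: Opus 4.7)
The plan is to use the standard duality between a projective limit of Hilbert spaces and the inductive limit of their duals, carried out at the level of the Sobolev scale. I would proceed in three short steps.

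First, I would establish density: since $C_c^\infty(E) \subset H^\infty(E)$ and $C_c^\infty(E)$ is by definition dense in every $H^k(E)$, it follows that $H^\infty(E)$ sits densely and continuously in each $H^k(E)$. This ensures that restriction of functionals is injective and extension is unique, both of which are needed to identify the dual.

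Second, I would exploit the Fréchet structure. The topology on $H^\infty(E) = \bigcap_k H^k(E)$ is defined by the seminorms $\|\cdot\|_{H^k}$, $k \in \IN_0$, which form an increasing family because $H^{k+1}(E) \hookrightarrow H^k(E)$ continuously implies $\|s\|_{H^k} \le \|s\|_{H^{k+1}}$. Consequently, any $\varphi \in (H^\infty(E))^\ast$ satisfies an estimate $|\varphi(s)| \le C \|s\|_{H^k}$ for some single $k \in \IN_0$ and some $C > 0$. By the density from the first step, $\varphi$ extends uniquely to a continuous linear functional on $H^k(E)$, i.e., to an element of $H^{-k}(E) = (H^k(E))^\ast$. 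This gives a well-defined map $(H^\infty(E))^\ast \to \bigcup_k H^{-k}(E)$.

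Third, I would check that this map is a bijection. For the inverse direction, any $\psi \in H^{-k}(E)$ restricts along the continuous inclusion $H^\infty(E) \hookrightarrow H^k(E)$ to a continuous functional on $H^\infty(E)$. The compatibility of these restrictions under the dual inclusions $H^{-k}(E) \hookrightarrow H^{-(k+1)}(E)$ (induced by dualizing $H^{k+1}(E) \hookrightarrow H^k(E)$) shows that the union $\bigcup_k H^{-k}(E)$ is genuinely a directed union and that extension followed by restriction is the identity. Injectivity is immediate from the density in step one.

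There is no real obstacle here; the only subtle point is verifying that a continuous functional on a countable Fréchet space bounded by the projective family of seminorms can be controlled by a single seminorm, which works precisely because the seminorms are monotone in $k$. Once that is in hand, the identification is formal.
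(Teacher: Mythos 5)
Your argument is correct, and in fact the paper states this lemma without any proof at all (it is treated as a standard fact about the Sobolev scale), so there is nothing to deviate from: yours is exactly the standard projective-limit/inductive-limit duality argument one would write down. The three ingredients you isolate are the right ones: density of $C_c^\infty(E)$, hence of $H^\infty(E)$, in each $H^k(E)$; the monotonicity $\|s\|_{H^k} \le \|s\|_{H^{k+1}}$, which lets you dominate any continuous functional on the \Frechet space by a \emph{single} seminorm; and the compatibility of extensions and restrictions along the dual inclusions $H^{-k}(E) \hookrightarrow H^{-(k+1)}(E)$. The only point worth flagging is that you use $H^{-k}(E) = (H^k(E))^\ast$ as if it were the definition, whereas in the paper $H^{-k}(E)$ is defined via the locally finite family of negative-order Sobolev norms in normal coordinate charts (the paper extends the Sobolev norms to all real orders through the local description \eqref{eq:sobolev_norm_local}); identifying that space with the dual of $H^k(E)$ via the $L^2$-pairing is itself a standard fact on manifolds and bundles of bounded geometry, but strictly speaking it is an extra (routine) step your write-up silently absorbs.
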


Equip the space $H^{-\infty}(E)$ with the inductive limit topology $\iota(H^{-\infty}(E), H^\infty(E))$:
\[H^{-\infty}_\iota(E) := \operatorname{\underrightarrow{\lim}} H^{-k}(E).\]
It enjoys the following universal property: a linear map $A \colon H^{-\infty}_\iota(E) \to F$ to a locally convex topological vector space $F$ is continuous if and only if $A|_{H^{-k}(E)}\colon H^{-k}(E) \to F$ is continuous for all $k \in \IN_0$. Note that this topology differs from the weak topology that we might also have put on $H^{-\infty}(E)$.

\begin{defn}[Smoothing operators]
Let $M$ be a manifold of bounded geometry and $E$ and $F$ two vector bundles of bounded geometry over $M$. We will call a continuous linear operator $A \colon H^{-\infty}_\iota(E) \to H^\infty(F)$ a \emph{smoothing operator}.

Note that this is equivalent to $A \colon H^{-k}(E) \to H^l(F)$ being bounded for all $k, l \in \IN_0$
\qed
\end{defn}

Denote by $\IB(H^{-\infty}_\iota(E), H^\infty(E))$ the algebra of all smoothing operators on $E$ and equip it with the countable family of norms $(\| \largecdot \|_{-k,l})_{k,l \in \IN_0}$, where $\|A\|_{-k,l}$ denotes the operator norm of $A \colon H^{-k}(E) \to H^l(F)$. So $\IB(H^{-\infty}_\iota(E), H^\infty(E))$ becomes a \Frechet space\footnote{That is to say, a topological vector space whose topology is Hausdorff and induced by a countable family of semi-norms such that it is complete with respect to this family of semi-norms.}.

The Schwartz Kernel Theorem for regularizing operators together with the bounded geometry of $M$ and the bundles $E$ and $F$ gives the following proposition:

\begin{prop}\label{prop:smoothing_op_kernel}
Let $A\colon H^{-\infty}_\iota(E) \to H^\infty(F)$ be a smoothing operator. Then $A$ is an integral operator with a kernel $k_A \in C_b^\infty(F \boxtimes E^\ast)$. Furthermore, the map
\begin{equation}
\label{eqklf34}
\IB(H^{-\infty}_\iota(E), H^\infty(F)) \to C_b^\infty(F \boxtimes E^\ast)
\end{equation}
associating a smoothing operator its kernel is continuous.
\end{prop}

\begin{defn}[Finite propagation]\label{defn:finite_prop_speed}
Let $A\colon L^2(E) \to L^2(F)$ be an operator. We will say that $A$ has \emph{finite propagation}, if there exists an $R \in \IR_{\ge 0}$ such that we have $\supp As \subset \overline{B_R(\supp s)}$ for all sections $s$ into $E$.

In this case we call the smallest possible value of $R$ the \emph{propagation} of $A$.
\qed
\end{defn}

If $A$ is a smoothing operator, then it has by Proposition \ref{prop:smoothing_op_kernel} a smooth integral kernel $k_A \in C_b^\infty(F \boxtimes E^\ast)$. It is clear that $A$ has propagation at most $R$ if and only if the kernel satisfies $k_A(x,y) = 0$ for all $x,y \in M$ with $d(x,y) > R$. On the other hand, if we have some section $k \in C_b^\infty(F \boxtimes E^\ast)$ with $k(x,y) = 0$ for all $x,y \in M$ with $d(x,y) > R$ for some $R > 0$, then the integral operator $A_k$ defined by it is a smoothing operator with propagation at most $R$. Note that in this case the adjoint operator $A_k^\ast = A_{k^\ast}$ will also be a smoothing operator with propagation at most $R$.

\begin{defn}[Algebraic smooth uniform Roe algebra]\label{defn:alg_smooth_uniform_roe_algebra}
Let $M$ be a manifold of bounded geometry and $E \to M$ a vector bundle of bounded geometry over $M$.

We denote the $^\ast$-algebra of all finite propagation smoothing operators on $E$ by $\IC_{-\infty}^\ast(E)$ and call it the \emph{algebraic smooth uniform Roe algebra of $E$}.
\qed
\end{defn}

The reason why we call $\IC_{-\infty}^\ast(E)$ the algebraic smooth uniform Roe algebra is because it is dense in the algebraic uniform Roe algebra $\IC_u^\ast(E)$.\footnote{The algebraic uniform Roe algebra $\IC_u^\ast(E)$ is defined as the $^\ast$-algebra of all finite propagation, uniformly locally compact operators in $\IB(L^2(E))$. The uniform Roe algebra $C_u^\ast(E)$ is defined as its completion.} This was proved by the author in his Ph.D.~thesis \cite[Lemma 2.57]{engel_phd}.

\subsection{Polynomially decaying quasilocal smoothing operators}
\label{sec:dense_smooth_subalgebras}

The main goal of this section is to define the smooth subalgebra $\Cpol(E)$ of the uniform Roe algebra $C_u^\ast(E)$.

\begin{defn}[Smooth uniform Roe algebra]\label{defnj23wef}
The closure of the algebraic smooth uniform Roe algebra $\uRoe(E)$ under the family of semi-norms $(\|\largecdot\|_{-k,l}, \|\largecdot^\ast\|_{-k,l})_{k,l \in\IN_0}$ is the \emph{smooth uniform Roe algebra} $\C(E)$.
\qed
\end{defn}

\begin{prop}[cf.~{\cite[Section 2.3]{engel_phd}}]\label{prop:smooth_Roe_holomorphically_closed}
The smooth uniform Roe algebra $\C(E)$ is a \Frechet $^\ast$-algebra\footnote{This is an algebra with a topology turning it into a \Frechet space with jointly continuous multiplication and such that the $^\ast$-operation is continuous. Note that we do not require that the semi-norms are sub-multiplicative.}

Furthermore, it is densely included in the uniform Roe algebra $C_u^\ast(E)$ and it is smooth, i.e., it and all matrix algebras over it are closed under holomorphic functional calculus.
\end{prop}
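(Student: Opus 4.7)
The plan is to verify the three assertions in turn: the Fréchet $^\ast$-algebra structure, the dense inclusion, and smoothness.

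For the first, $\C(E)$ is by construction the closure of $\uRoe(E)$ under a countable family of seminorms which is symmetric under $A \leftrightarrow A^\ast$, so it is automatically Hausdorff and complete, i.e., a Fréchet space, and the $^\ast$-operation is continuous. Joint continuity of multiplication reduces to the submultiplicative-type estimate $\|AB\|_{-k,l} \le \|A\|_{0,l}\,\|B\|_{-k,0}$ (and conjugate variants in $\|\cdot^\ast\|_{-k,l}$), and the closure is stable under products because $\uRoe(E)$ is itself closed under composition (propagations add). Density of $\C(E) \subset C_u^\ast(E)$ is then immediate from the continuous inclusion induced by $\|\cdot\|_{\mathrm{op}} = \|\cdot\|_{0,0}$, combined with the already-cited density of $\uRoe(E)$ in $C_u^\ast(E)$ in operator norm.

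Smoothness is the heart of the matter. I would reduce it via a standard Fréchet-subalgebra criterion (Schweitzer, equivalently Gramsch/Bost): for a dense Fréchet $^\ast$-subalgebra of a $C^\ast$-algebra, spectral invariance implies closure under holomorphic functional calculus; applied to the bundle $E^{\oplus N}$ in place of $E$, the same criterion uniformly handles all matrix algebras, since the bundles $E^{\oplus N}$ again have bounded geometry. Thus the proposition reduces to spectral invariance: if $1+A$ with $A \in \C(E)$ is invertible in the unitization of $C_u^\ast(E)$, then $B := (1+A)^{-1} - 1$ lies in $\C(E)$.

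The spectral-invariance proof is a Sobolev bootstrap based on the identities $B = -A - AB = -A - BA$ coming from $(1+A)(1+B) = (1+B)(1+A) = 1$. By hypothesis $B$ is bounded on $L^2 = H^0$. Since $A$ is smoothing, $AB : H^1 \to H^1$ is bounded via the factorization $H^1 \hookrightarrow L^2 \xrightarrow{B} L^2 \xrightarrow{A} H^1$, so the identity yields $B : H^1 \to H^1$ bounded; iterating gives $B : H^s \to H^s$ for all $s \ge 0$. Duality applied to $B^\ast = (1+A^\ast)^{-1} - 1$ (noting $A^\ast \in \C(E)$) then extends this to all $s \in \IZ$. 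Substituting back into $B = -A(1+B)$ and using the smoothing nature of $A$ yields $\|B\|_{-k,l} < \infty$ and $\|B^\ast\|_{-k,l} < \infty$ for all $k,l \in \IN_0$.

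The remaining and most delicate point is to promote these finite seminorms to actual membership in the Fréchet closure of $\uRoe(E)$. Here I would invoke Proposition \ref{prop:smoothing_op_kernel} to obtain the smooth Schwartz kernel $k_B \in C_b^\infty(E \boxtimes E^\ast)$ of $B$, approximate $B$ by operators with cut-off kernels $\chi_R(d(x,y))\,k_B(x,y)$ where $\chi_R$ is a bounded-geometry cutoff equal to $1$ for $d(x,y) \le R$, and use the local form \eqref{eq:sobolev_norm_local} of the Sobolev norms together with bounded-geometry commutator estimates to show convergence in every $\|\cdot\|_{-k,l}$ and $\|\cdot^\ast\|_{-k,l}$ as $R \to \infty$. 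I expect this final approximation step — tying the kernel-level cutoff to the operator seminorms — to be the main technical obstacle of the proof.
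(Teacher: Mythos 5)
Your first two parts are fine (your factorization of $AB$ through $L^2$ gives $\|AB\|_{-k,l}\le\|A\|_{0,l}\|B\|_{-k,0}$, which is as good as the paper's factorization through $H^1\hookrightarrow H^{-1}$, and density is the same observation as in the paper). The problem is the smoothness argument, and it sits exactly where you yourself flag it. Membership in $\C(E)$ means membership in the \emph{closure of the finite propagation smoothing operators} $\uRoe(E)$ in the seminorms $(\|\largecdot\|_{-k,l},\|\largecdot^\ast\|_{-k,l})$; it is not the same as being a smoothing operator with smoothing adjoint. Your bootstrap only delivers the latter for $B=(1+A)^{-1}-1$, and your proposed repair --- cutting off the Schwartz kernel $k_B$ at distance $R$ and letting $R\to\infty$ --- cannot be justified: the $\C(E)$ seminorms give no off-diagonal decay of $k_B$ whatsoever (decay is precisely what the later algebras $\Cpoln(E)$ are introduced to encode), and all one can extract for $B$ is quasilocality with an unspecified dominating function. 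Whether a quasilocal (smoothing) operator is approximable by finite propagation ones is exactly the open problem the paper records (cf.\ the discussion after Definition \ref{defn:Cpol} and Question \ref{question:quasilocal_approximable}), so this step is a genuine gap, not a routine technicality.

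Two ways to close it. The paper avoids spectral invariance altogether: by Schweitzer it suffices to treat $\C(E)$ itself, and by Schmitt's criterion it suffices to show that $f(A)\in\C(E)$ for every power series $f$ with $f(0)=0$ and radius of convergence exceeding $\|A\|_{0,0}$. The estimate $\|A^{n+2}\|_{-k,l}\le\|A\|_{0,l}\|A\|_{0,0}^{n}\|A\|_{-k,0}$ (plus the adjoint version) shows the series $\sum_i a_iA^i$ converges in every seminorm; since its partial sums are polynomials in $A$ and hence already lie in $\C(E)$, completeness of $\C(E)$ gives $f(A)\in\C(E)$ with no approximation-by-finite-propagation issue ever arising. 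Alternatively, your spectral-invariance route can be rescued by an algebraic trick rather than a kernel cutoff: from $(1+A)(1+B)=(1+B)(1+A)=1$ one gets $B=-A+A(1+B)A$; now approximate the outer factors by finite propagation smoothing operators $A_j\to A$ in the $\C(E)$ topology and the middle factor $1+B$ in operator norm by finite propagation operators (possible since $B\in C_u^\ast(E)$), and the products $-A_j+A_j(1+B_j)A_j$ are finite propagation smoothing operators converging to $B$ in every $\|\largecdot\|_{-k,l}$ and $\|\largecdot^\ast\|_{-k,l}$ by the same factorization estimates you already use. Either of these replaces your cutoff step; as written, the proposal does not establish $B\in\C(E)$.
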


\begin{proof}
The only non-trivial part in showing that $\C(E)$ is a \Frechet $^\ast$-algebra is the joint continuity of the multiplication. So if given smoothing operators $A$ and $B$, we have for all $k,l \in \IN_0$ the diagram
\[\xymatrix{H^{-k}(E) \ar[r]^{AB} \ar[d]_B & H^l(E) \\ H^1(E) \ar@{^{(}->}[r] & H^{-1}(E) \ar[u]^A}\]
from which we get $\|AB\|_{-k,l} \le \|B\|_{-k,1} \cdot C \cdot \|A\|_{-1,l}$, where $C$ is the norm of the inclusion $H^1(E) \hookrightarrow H^{-1}(E)$. From this it follows that multiplication is jointly continuous.

Since the family of semi-norms on $\C(E)$ contains the usual operator norm $\|\largecdot \|_{0,0}$, we have a continuous inclusion with dense image $\C(E) \to C_u^\ast(E)$, because the algebraic smooth uniform Roe algebra $\IC_{-\infty}^\ast(E)$ is dense in $C_u^\ast(E)$; cf. \cite[Lemma 2.57]{engel_phd}.

It remains to show that $\C(E)$ is closed under holomorphic functional calculus, since by \cite[Corollary 2.3]{schweitzer} it will follow from this that all matrix algebras over $\C(E)$ are also closed under it. To do this we will use \cite[Theorem 2.1]{schmitt} which we will recall for the convenience of the reader after this proof. Let $A \in \C(E)$ and let $f(x) = \sum_{i \ge 1} a_i x^i$ be a power series around $0 \in \IC$ with radius of convergence bigger than $\|A\|_{op} = \|A\|_{0,0}$. We have to show that $f(A) \in \C(E)$. For $k,l \in \IN_0$ we have the estimate
\[\|A^{n+2}\|_{-k,l} \le \|A\|_{0,l} \cdot \|A\|_{0,0}^n \cdot \|A\|_{-k,0}\]
and so
\begin{align*}
\|f(A)\|_{-k,l} & \le \sum_{i \ge 1} |a_i| \cdot \|A^i\|_{-k,l}\\
& \le |a_1| \|A\|_{-k,l} + \sum_{i \ge 2} |a_i| \cdot \|A\|_{0,l} \cdot \|A\|_{0,0}^{i-2} \cdot \|A\|_{-k,0}\\
& < \infty.
\end{align*}
Since this holds for every $k,l \in \IN_0$, we conclude that $f(A)$ is a smoothing operator. Applying the same argument to the adjoint $A^\ast$ of $A$, we conclude $f(A) \in \C(E)$.
\end{proof}

\begin{lem}[cf.~{\cite[Theorem 2.1]{schmitt}}]
Let $A$ be a normed algebra with the property: for all $a \in A$ and every power series $f$ around $0 \in \IC$ with radius of convergence bigger than $\|a\|$ (and with $f(0) = 0$ if $A$ is non-unital), we have $f(a) \in A$.

Then $A$ is closed under holomorphic functional calculus.
\end{lem}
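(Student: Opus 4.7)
The plan is to realize $f(a)$ via the Cauchy contour integral formula and to check that every resolvent appearing in the integrand lies in $A$. Throughout I view $A$ as sitting in its Banach completion $\bar A$, and interpret ``closed under holomorphic functional calculus'' to mean that $f(a)\in A$ whenever $a\in A$ and $f$ is holomorphic on a neighbourhood of the spectrum $\sigma(a)=\sigma_{\bar A}(a)$.

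The first move is to apply the hypothesis to the geometric series $g_\lambda(z)=\sum_{n\ge 0}\lambda^{-n-1}z^n$, whose radius of convergence $|\lambda|$ exceeds $\|a\|$ whenever $|\lambda|>\|a\|$: this gives $(\lambda-a)^{-1}=g_\lambda(a)\in A$ for every such $\lambda$, and in particular confirms that $\sigma(a)\subset\overline{B_{\|a\|}(0)}$.

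The key propagation step comes next. If $b:=(\lambda-a)^{-1}\in A$ and $\mu\in\IC$ satisfies $|\mu-\lambda|<1/\|b\|$, then the power series $h(z)=\sum_{n\ge 0}(\lambda-\mu)^n z^n$ has radius of convergence $1/|\mu-\lambda|>\|b\|$, so by the hypothesis \emph{applied to the element} $b$ we have $h(b)=(1-(\lambda-\mu)b)^{-1}\in A$; from the algebraic identity $\mu-a=(\lambda-a)(1-(\lambda-\mu)(\lambda-a)^{-1})$ it follows that
\[(\mu-a)^{-1}=h(b)\cdot b\in A.\]
Iterating this step along a path in the resolvent set, starting from some $\lambda$ with $|\lambda|>\|a\|$, spreads resolvent-closedness throughout the connected component of the resolvent set containing $\{|\mu|>\|a\|\}$, that is, the unbounded component.

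Finally, given $f$ holomorphic on a neighbourhood $U$ of $\sigma(a)$, I would choose a contour $\gamma\subset U$ surrounding $\sigma(a)$ and lying in the unbounded component of the resolvent set, and compute $f(a)=\tfrac{1}{2\pi i}\oint_\gamma f(z)(z-a)^{-1}\,dz$. The integrand is a continuous $A$-valued map on the compact contour $\gamma$, so the integral is an $A$-valued Riemann integral, and completeness of $A$ (Fr\'echet completeness, as for $\C(E)$) places $f(a)\in A$. The main obstacle is that the Neumann-series propagation only covers the unbounded component of the resolvent set, so this last step breaks if $\sigma(a)$ has bounded ``holes'' on which $f$ is not defined. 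This is bypassed either by shrinking $U$ to a tubular neighbourhood of $\sigma(a)$ contained in the unbounded component, or by preliminarily approximating $f$ uniformly on a compact neighbourhood of $\sigma(a)$ by polynomials via Runge's theorem and applying the hypothesis termwise.
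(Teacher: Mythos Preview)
The paper does not give its own proof of this lemma; it merely records Schmitt's result for later use. So there is no in-paper argument to compare against, and I assess your proposal on its own.

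Your Neumann-series propagation is correct as far as it goes, but---as you yourself flag---it only yields $(\lambda-a)^{-1}\in A$ for $\lambda$ in the \emph{unbounded} component of $\rho_{\bar A}(a)$. Neither of your proposed patches closes this gap. The first (``shrink $U$ to a tubular neighbourhood of $\sigma(a)$ contained in the unbounded component'') is incoherent: any neighbourhood of $\sigma(a)$ contains $\sigma(a)$ and hence cannot lie in the resolvent set; if instead you mean to place the \emph{contour} in the unbounded component, then its winding number about every bounded hole of $\rho(a)$ is $1$, which is wrong whenever $f$ is not defined on that hole. The second patch (polynomial approximation via Runge) only works when $\IC\setminus K$ is connected; in general Runge produces rational functions with poles in the bounded components of the complement---precisely the resolvents you have not established lie in $A$.

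The clean repair is to use density of $A$ in $\bar A$ to obtain inverse closedness in one stroke. Given any $\lambda_0\in\rho_{\bar A}(a)$ (bounded component or not), choose $c\in A$ with $\|c-(\lambda_0-a)^{-1}\|<\|\lambda_0-a\|^{-1}$; then $d:=(\lambda_0-a)\,c\in A$ satisfies $\|1-d\|<1$, so the hypothesis applied to $1-d$ and the geometric series gives $d^{-1}\in A$, whence $(\lambda_0-a)^{-1}=c\,d^{-1}\in A$. This reaches every component of the resolvent set at once, after which your Cauchy-integral argument (or Runge with rational functions) goes through. One residual subtlety remains: the contour integral is a norm-limit of Riemann sums lying in $A$, and concluding that the limit lies in $A$ requires some completeness. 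Your appeal to ``Fr\'echet completeness, as for $\C(E)$'' is fine for the paper's intended application but is not part of the lemma's stated hypotheses; the precise formulation in Schmitt's paper presumably accounts for this.
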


Let $L \subset M$ be any subset. We will denote by $\|\largecdot\|_{H^r, L}$ the seminorm on the Sobolev space $H^r(E)$ given by
\[\|u\|_{H^r, L} := \inf \{ \|u^\prime\|_{H^r} \colon u^\prime \in H^r(E), u^\prime = u \text{ on a neighbourhood of }L\}.\]

\begin{defn}[Quasilocal operators, {\cite[Section 5]{roe_index_1}}]\label{defn:quasiloc_ops}
We will call a continuous operator $A \colon H^r(E) \to H^s(F)$ \emph{quasilocal}, if there is a function $\mu\colon \IR_{> 0} \to \IR_{\ge 0}$ with $\mu(R) \to 0$ for $R \to \infty$ and such that for all $L \subset M$ and $u \in H^r(E)$ with $\supp u \subset L$
\begin{equation}
\label{eqdom94}
\|A u\|_{H^s, M - B_R(L)} \le \mu(R) \cdot \|u\|_{H^r}.
\end{equation}
Such a function $\mu$ will be called a \emph{dominating function for $A$}.

We say that $A\colon C_c^\infty(E) \to C^\infty(F)$ is a \emph{quasilocal operator of order $k$}\footnote{Roe calls such operators ``\emph{uniform} operators of order $k$'' in \cite[Definition 5.3]{roe_index_1}.} for some $k \in \IZ$, if $A$ has a continuous extension to a quasilocal operator $H^s(E) \to H^{s-k}(F)$ for all $s \in \IZ$.

A smoothing operator $A\colon H^{-\infty}_\iota(E) \to H^\infty(F)$ will be called \emph{quasilocal}, if $A$ is quasilocal as an operator $H^{-k}(E) \to H^l(F)$ for all $k,l \in \IN_0$.
\qed
\end{defn}

Note that if $A$ is a finite propagation operator, then $A$ is quasilocal.

\begin{lem}[{\cite[Lemma 2.26]{engel_phd}}]\label{lem:limit_quasilocal}
If $(A_i)_i$ is a sequence of quasilocal operators $H^{-k}(E) \to H^l(E)$ converging in the norm $\|\largecdot \|_{-k,l}$ to an operator $A$, then $A$ will also be a quasilocal operator.
\end{lem}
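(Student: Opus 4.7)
The plan is to construct a dominating function $\mu$ for $A$ by combining the dominating functions $\mu_i$ of the $A_i$ with the fact that the $A_i$ approximate $A$ in operator norm. The key observation is that the seminorm $\|\cdot\|_{H^l, M - B_R(L)}$ is dominated by the full norm $\|\cdot\|_{H^l}$, so for any operator $T \colon H^{-k}(E) \to H^l(E)$ and any $u$ supported in $L$, we have
\[\|Tu\|_{H^l, M - B_R(L)} \le \|Tu\|_{H^l} \le \|T\|_{-k,l} \cdot \|u\|_{H^{-k}}.\]
Applied to $T = A - A_i$, this lets us swap $A$ for $A_i$ inside the seminorm at the cost of $\|A - A_i\|_{-k,l} \cdot \|u\|_{H^{-k}}$.

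Given this, I would first pass to a subsequence (still called $A_i$) with $\|A - A_i\|_{-k,l} < 1/i$. By quasilocality of $A_i$ there is a dominating function $\mu_i$ with $\mu_i(R) \to 0$ as $R \to \infty$, so I can choose an increasing sequence $R_1 < R_2 < \cdots$ with $R_i \to \infty$ and $\mu_i(R) < 1/i$ for all $R \ge R_i$. Define a function $\mu$ by $\mu(R) := 2/i$ for $R \in [R_i, R_{i+1})$ and, say, $\mu(R) := \mu_1(R) + 1$ for $R < R_1$; then $\mu(R) \to 0$ as $R \to \infty$.

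It remains to verify that $\mu$ is a dominating function for $A$. For $R \ge R_1$, choose the index $i$ with $R \in [R_i, R_{i+1})$. Then for any $L \subset M$ and $u \in H^{-k}(E)$ with $\supp u \subset L$, a triangle inequality gives
\[\|Au\|_{H^l, M - B_R(L)} \le \|A_i u\|_{H^l, M - B_R(L)} + \|(A - A_i) u\|_{H^l, M - B_R(L)} \le \mu_i(R) \|u\|_{H^{-k}} + \|A - A_i\|_{-k,l} \|u\|_{H^{-k}},\]
and both terms are at most $(1/i)\|u\|_{H^{-k}}$, so the sum is $\le \mu(R) \|u\|_{H^{-k}}$. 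This completes the argument.

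There is no real obstacle: the proof is essentially a $3\varepsilon$-style approximation in which the only subtlety is that one cannot merely set $\mu(R) := \inf_i(\|A - A_i\|_{-k,l} + \mu_i(R))$ and be done, because one must ensure $\mu(R) \to 0$; the bookkeeping via the sequence $R_i$ just records the right dependence of $i$ on $R$.
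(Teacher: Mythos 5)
Your proof is correct and follows essentially the same route as the paper's: both build the dominating function for $A$ piecewise, using a sequence of approximants whose error tends to zero together with thresholds $R_i$ beyond which the corresponding $\mu_i$ is small (the paper uses errors $\varepsilon/2^j$, you use $1/i$), combined with the same triangle-inequality swap of $A$ for $A_i$. Only your closing aside is slightly off: the simpler choice $\mu(R) := \inf_i \big( \|A - A_i\|_{-k,l} + \mu_i(R) \big)$ would in fact also work, since for any $\varepsilon > 0$ one can fix $i$ with $\|A - A_i\|_{-k,l} < \varepsilon/2$ and then this infimum is below $\varepsilon$ for all large $R$, so it does tend to $0$.
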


Roe gave in \cite[Proposition 5.2]{roe_index_1} the following estimate: if $\mu_A$ is a dominating function for the operator $A \in \IB(L^2(E))$ and $\mu_B$ one for $B \in \IB(L^2(E))$, then a dominating function for $AB \in \IB(L^2(E))$ is given by
\begin{equation}
\label{eq:mu_AB}
\mu_{AB}(R) = \|A\|_{\mathrm{op}} \cdot 2\mu_B(R/2) + \mu_A(R/2)\big( \|B\|_{\mathrm{op}} + 2 \mu_B(R/2) \big),
\end{equation}
where $\|\largecdot\|_{\mathrm{op}}$ is the operator norm on $\IB(L^2(E))$ and by $\mu_{\largecdot}(R)$ we denote dominating functions for operators $L^2(E) \to L^2(E)$. Since we can always assume $\mu_A(R) \le \|A\|_{\mathrm{op}}$, we can estimate the dominating function for $A^2$ (i.e., in the case $A=B$) from above by
\begin{equation}
\label{eq:mu_A_squared}
\mu_{A^2}(R) \le 5 \|A\|_{\mathrm{op}} \mu_A(R/2).
\end{equation}

\begin{lem}\label{lem:dominating_function_powers}
For every $n \in \IN$ we have for all $R > 0$
\[\mu_{A^{n+1}}(R) \le \sum_{k=1}^n 5^k \|A\|_{\mathrm{op}}^n \mu_A(R/2^k).\]
\end{lem}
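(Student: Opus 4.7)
The plan is to prove the lemma by induction on $n$, using Roe's composition estimate \eqref{eq:mu_AB} as the induction step and the square-case estimate \eqref{eq:mu_A_squared} as the base case.

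For the base case $n=1$, the claimed inequality reads $\mu_{A^2}(R) \le 5 \|A\|_{\mathrm{op}} \mu_A(R/2)$, which is exactly \eqref{eq:mu_A_squared}.

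For the induction step, assume the bound holds for some $n \ge 1$. Write $A^{n+2} = A \cdot A^{n+1}$ and apply \eqref{eq:mu_AB} with ``$A$'' and ``$B$'' there playing the roles of $A$ and $A^{n+1}$. This gives
\[
\mu_{A^{n+2}}(R) \le \|A\|_{\mathrm{op}} \cdot 2\mu_{A^{n+1}}(R/2) + \mu_A(R/2)\bigl(\|A^{n+1}\|_{\mathrm{op}} + 2\mu_{A^{n+1}}(R/2)\bigr).
\]
Use the standing bounds $\|A^{n+1}\|_{\mathrm{op}} \le \|A\|_{\mathrm{op}}^{n+1}$ and $\mu_{A^{n+1}}(R/2) \le \|A^{n+1}\|_{\mathrm{op}} \le \|A\|_{\mathrm{op}}^{n+1}$ to absorb the second big parenthesis into $3\|A\|_{\mathrm{op}}^{n+1}$, and then feed the inductive hypothesis into the remaining term $\mu_{A^{n+1}}(R/2)$, yielding
\[
\mu_{A^{n+2}}(R) \le 3\|A\|_{\mathrm{op}}^{n+1}\mu_A(R/2) + 2\|A\|_{\mathrm{op}}^{n+1} \sum_{k=1}^{n} 5^{k}\,\mu_A\!\bigl(R/2^{k+1}\bigr).
\]

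Finally, reindex the sum with $j=k+1$ and compare to the target expression $\sum_{j=1}^{n+1} 5^j \|A\|_{\mathrm{op}}^{n+1} \mu_A(R/2^j)$. The coefficient in front of $\mu_A(R/2)$ is $3 \le 5$, and the coefficients in front of $\mu_A(R/2^j)$ for $j \ge 2$ are $2 \cdot 5^{j-1} \le 5^j$, so every term is dominated by the corresponding term on the right-hand side. This closes the induction.

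I don't expect any real obstacle; the calculation is entirely formal once one commits to the splitting $A^{n+2} = A \cdot A^{n+1}$ and to systematically bounding $\mu_{A^{n+1}}$ by $\|A\|_{\mathrm{op}}^{n+1}$ in the ``cross-term'' while invoking the inductive hypothesis in the ``leading term.'' The only bookkeeping subtlety is the reindexing of the sum, and the slack factors $3 \le 5$ and $2 \le 5$ make the inequality work out cleanly with the factor $5^k$ that appears in the statement.
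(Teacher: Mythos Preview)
Your proof is correct and follows the same inductive strategy as the paper. The only cosmetic difference is the order of factors: the paper applies \eqref{eq:mu_AB} to the splitting $A^{n+1} = A^n \cdot A$ (so the inductive hypothesis enters through the $\mu_{A^n}$ term), whereas you use $A^{n+2} = A \cdot A^{n+1}$; both orderings produce the required slack against the factor $5^k$.
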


\begin{proof}
The case $n = 1$ is Equation \eqref{eq:mu_A_squared} above. By applying Equation \eqref{eq:mu_AB} to $\mu_{A^n A}(R)$ we get inductively the claimed estimate for all $n \in \IN$.
\end{proof}

\begin{defn}\label{defn:Cn}
We define for every $n \in \IN$ a norm on $\uRoe(E)$ by
\[\|A\|_{\mu,n} := \inf \{D > 0\colon \mu_A(R) \le D/R^n \ \forall R > 1\},\]
where $\mu_A(R)$ denotes now the smallest possible choice of dominating function for the operator $A \colon L^2(E) \to L^2(E)$, i.e.,
\[\mu_A(R) = \inf \{C > 0 \colon \|Au\|_{L^2, M-B_R(\supp u)} \le C \|u\|_{L^2} \text{ for all } u \in L^2(E)\}.\]
(Note that we have $\mu_A(R) \le \|A\|_{\op} \le \|A\|_{\op} / R^n$ for all $R \le 1$.)

We let $\Cpoln(E)$ be the closure of the algebraic smooth uniform Roe algebra $\IC_{-\infty}^\ast(E)$ under the family of norms $(\|\largecdot\|_{-k,l}, \|\largecdot^\ast\|_{-k,l}, \|\largecdot\|_{\mu,n}, \|\largecdot^\ast\|_{\mu,n})_{k,l \in\IN_0}$.
\qed
\end{defn}

\begin{lem}\label{lem:Cpoln_Frechet_smooth}
$\Cpoln(E)$ is a \Frechet $^\ast$-algebra which is dense in $\C(E)$ and smooth.
\end{lem}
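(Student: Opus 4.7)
My plan is to establish the three assertions---Fréchet $^\ast$-algebra structure, density in $\C(E)$, and smoothness---by extending the proof of Proposition \ref{prop:smooth_Roe_holomorphically_closed}, with all new work being bookkeeping for the $\|\cdot\|_{\mu,n}$ and $\|\cdot^\ast\|_{\mu,n}$ seminorms. Density is immediate, since $\IC_{-\infty}^\ast(E)$ lies in $\Cpoln(E)$ by construction and is already dense in $\C(E)$ by definition of the latter. Completeness of $\Cpoln(E)$ I would obtain by adapting the argument used in the proof of Lemma \ref{lem:limit_quasilocal}: for a sequence $(A_i)$ Cauchy in every seminorm, $A_i \to A$ in operator norm and the uniform bound $\mu_{A_i}(R) \le D/R^n$ passes to the limit. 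Continuity of $^\ast$ is immediate because the seminorm family is symmetric under $A \leftrightarrow A^\ast$.

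For joint continuity of multiplication, the $\|\cdot\|_{-k,l}$-argument is identical to that of Proposition \ref{prop:smooth_Roe_holomorphically_closed}. For the polynomial seminorms I would substitute $\mu_A(R/2) \le 2^n \|A\|_{\mu,n}/R^n$ (valid for all $R>0$ because one may assume $\|A\|_{\mu,n} \ge \|A\|_{\op}$) into Roe's product estimate \eqref{eq:mu_AB} and treat $R \le 1$ (where simply $\mu_{AB} \le \|A\|_{\op}\|B\|_{\op}$) and $R \ge 1$ (where the cross term contributes an extra $1/R^{2n} \le 1/R^n$) separately, arriving at an inequality of the form
\[\|AB\|_{\mu,n} \le C_n\bigl(\|A\|_{\op}\|B\|_{\mu,n} + \|A\|_{\mu,n}\|B\|_{\op} + \|A\|_{\mu,n}\|B\|_{\mu,n}\bigr),\]
and analogously for $\|(AB)^\ast\|_{\mu,n}$ via $(AB)^\ast = B^\ast A^\ast$.

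For smoothness, Schweitzer's corollary \cite[Corollary 2.3]{schweitzer} reduces the matrix-algebra statement to showing $\Cpoln(E)$ is itself closed under holomorphic functional calculus, which I would establish via the Schmitt criterion applied to the operator norm $\|\cdot\|_{\op} = \|\cdot\|_{0,0}$. Given $A\in\Cpoln(E)$ and $f(x)=\sum_{i\ge1} a_i x^i$ with radius of convergence strictly greater than $\|A\|_{\op}$, I must verify that every seminorm of $\sum_i a_i A^i$ is finite; the $\|\cdot\|_{-k,l}$-bookkeeping is copied verbatim from Proposition \ref{prop:smooth_Roe_holomorphically_closed}.

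The main technical obstacle is the $\|\cdot\|_{\mu,n}$-estimate on $A^i$. The naive substitution of Lemma \ref{lem:dominating_function_powers} into the definition of $\|\cdot\|_{\mu,n}$ only gives $\|A^{p+1}\|_{\mu,n} \le 2(5\cdot 2^n)^p\|A\|_{\op}^p\|A\|_{\mu,n}$, whose exponential growth in $p$ with base $5\cdot 2^n\|A\|_{\op}$ would force the radius of convergence of $f$ to exceed $5\cdot 2^n\|A\|_{\op}$ rather than just $\|A\|_{\op}$, and is therefore too weak. To get the correct estimate I would redo the $i$-fold product bound with a symmetric split: writing $A^iu = A\cdots Au$ and decomposing $u_j := Au_{j-1}$ at each of the $i$ stages by an $L^2$-cutoff at scale $R/i$, an inductive tracking of the residuals produces
\[\mu_{A^i}(R) \le i\,\|A\|_{\op}^{i-1}\,\mu_A(R/i)\qquad\text{for all } R>0,\]
and inserting $\mu_A(R/i) \le \|A\|_{\mu,n}\,i^n/R^n$ yields $\|A^i\|_{\mu,n} \le i^{n+1}\|A\|_{\op}^{i-1}\|A\|_{\mu,n}$. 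The merely polynomial prefactor $i^{n+1}$ does not alter the radius of convergence, so $\sum_i|a_i|\|A^i\|_{\mu,n}$ converges; running the same argument on $A^\ast$ in place of $A$ controls $\|f(A)^\ast\|_{\mu,n}$ and completes the verification $f(A) \in \Cpoln(E)$.
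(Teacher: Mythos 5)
Your proof is correct, but for the smoothness part it takes a genuinely different route from the paper. The paper does \emph{not} verify the Schmitt power-series criterion for the $\mu$-seminorms; instead it reduces, via \cite[Lemma 1.2]{schweitzer} and \cite[Lemma 3.38]{GBVF}, to showing inverse closedness for $B$ with $\|B\|_{\op}<1/(2^{n+1}\cdot 5)$, and then controls the Neumann series $\sum_k B^k$ using exactly the repeated-halving bound of Lemma \ref{lem:dominating_function_powers}: the exponential loss $5^l 2^{nl}$ that you identify is absorbed there by the smallness of $\|B\|_{\op}$, and the dominating function of the limit is assembled by the splicing argument of Lemma \ref{lem:limit_quasilocal}. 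You avoid this entirely by proving the sharper power estimate $\mu_{A^i}(R)\le i\,\|A\|_{\op}^{i-1}\mu_A(R/i)$ via cutoffs by characteristic functions of $B_{jR/i}(\supp u)$, peeling off one factor of $A$ at a time; this works because the seminorms of Definition \ref{defn:Cn} are $L^2\to L^2$ dominating functions, for which $\|\largecdot\|_{L^2,S}$ is essentially a restriction norm and the cutoffs are contractive, so no cross terms appear. The resulting bound $\|A^i\|_{\mu,n}\le i^{n+1}\|A\|_{\op}^{i-1}\|A\|_{\mu,n}$ has only polynomial growth in $i$, so the Schmitt criterion applies with respect to $\|\largecdot\|_{\op}$ exactly as in Proposition \ref{prop:smooth_Roe_holomorphically_closed}; your argument is arguably cleaner (no smallness assumption, no splicing) and in effect strengthens Lemma \ref{lem:dominating_function_powers}. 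The remaining parts (density, completeness, continuity of $^\ast$, joint continuity of multiplication) match the paper's proof up to constants. One caveat: your cutoff argument is tied to the $L^2$ level, so for the variant seminorms $\|\largecdot\|_{\mu,n}^{-k,l}$ of Remark \ref{rem:defn_seminorms_uRoe} (where characteristic functions are not bounded on $H^l$) you would still need the factorization through $L^2$ used there, just as the paper does.
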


\begin{proof}
The only non-trivial part of showing that it is a \Frechet $^\ast$-algebra is to show that multiplication is jointly continuous. But using Equation \eqref{eq:mu_AB} we get
\[\|AB\|_{\mu,n} \le \|A\|_{\mathrm{op}}\cdot 2\cdot 2^n \|B\|_{\mu,n} + 2^n \|A\|_{\mu,n} \big( \|B\|_{\mathrm{op}} + 2 \cdot 2^n \|B\|_{\mu,n} \big).\]
Together with the fact that we already know that multiplication is jointly continuous on $\C(E)$, the joint continuity of multiplication on $\Cpoln(E)$ follows.

That $\Cpoln(E)$ is dense in $\C(E)$ follows from the fact that $\IC_{-\infty}^\ast(E) \subset \Cpoln(E)$ is dense in $\C(E)$.

It remains to show that $\Cpoln(E)$ is closed under holomorphic functional calculus; by \cite[Corollary 2.3]{schweitzer} it will follow that the same holds for all matrix algebras over it. By \cite[Lemma 1.2]{schweitzer} it suffices to show that $\Cpoln(E)$ is inverse closed, and for this it suffices by \cite[Lemma 3.38]{GBVF} to show that $(\id - B)^{-1} \in \Cpoln(E)$ if $B \in \Cpoln(E)^+$ with $\|B\|_{\mathrm{op}} < 1/(2^{n+1}\cdot 5)$. Here $\largecdot^+$ denotes unitization. We know from Proposition \ref{prop:smooth_Roe_holomorphically_closed} that $\C(E)$ is inverse closed, so it suffices to give an estimate for $\|(\id - B)^{-1}\|_{\mu,n}$.

So let $B \in \Cpoln(E)^+$ with $\|B\|_{\mathrm{op}} < 1/(2^{n+1} \cdot 5)$ be given. Then $(\id - B)^{-1} = \sum_{k \ge 0} B^k$ with convergence in operator norm and we have $\|(\id - B)^{-1} - \sum_{k = 0}^{N} B^k\|_{\mathrm{op}} < (2^{n+1} \cdot 5)^{-N}.$

To get an estimate on the dominating function of $(\id - B)^{-1}$ we will use the argument from the proof of Lemma \ref{lem:limit_quasilocal}: The sequence $\left(\sum_{k = 0}^{N} B^k\right)_{\! N \in \IN_0}$ approximates $(\id - B)^{-1}$ and with every step (i.e., increasing $N$ by one) the error of the approximation is multiplied by $1/(2^{n+1} \cdot 5)$. So if we denote by $\mu_{N}(R)$ the dominating function of $\sum_{k = 0}^{N} B^k$, we get
\[\mu_{(\id - B)^{-1}}(R) \le (2^{n+1} \cdot 5)^{-N} + \mu_N(R)\]
for all $R \in [2^{N-1}, 2^{N}]$ and all $N \ge 1$. Note that for $R \le 1$ we can always use the estimate $\mu_A(R) \le \|A\|_{\op} / R^n$, i.e., we have to find an estimate for $\mu_{(\id - B)^{-1}}(R)$ only for $R \ge 1$.

We will find a function $\nu_N(R)$ with $\mu_N(R) \le \nu_N(R)$ and $\nu_{N+1}(2^{N+1}) \le \nu_N(2^N) \cdot 1/2^n$, because then we will get $\mu_{(\id - B)^{-1}}(2^N) \le (2^{n+1} \cdot 5)^{-N} + \nu_1(1)\cdot (2^n)^{-N}.$ From this we get $\mu_{(\id - B)^{-1}}(R) \le R^{-(n+1)} \cdot 5^{-\log_2 R} + \nu_1(1) \cdot R^{-n}$ which gives the final estimate
\[\|(\id - B)^{-1}\|_{\mu,n} \le \max \{ \|(\id - B)^{-1}\|_{\op}, 1 + \nu_1(1)\}.\]

We have $\mu_{N+1}(R) \le \sum_{k=0}^{N+1} \mu_{B^k}(R)$. Applying Lemma \ref{lem:dominating_function_powers} we get (note $\mu_{B^0}(R) \equiv 0$)
\begin{align*}
\mu_{N+1}(R) & \le \mu_B(R) + \sum_{k=1}^N \sum_{l=1}^{k} 5^l \|B\|_{\mathrm{op}}^{k} \mu_B(R/2^l)\\
& = \mu_B(R) + \sum_{l=1}^{N} 5^l \mu_B(R/2^l) \cdot \underbrace{\sum_{k=l}^N \|B\|_{\mathrm{op}}^{k}}_{= \frac{\|B\|_{\mathrm{op}}^l - \|B\|_{\mathrm{op}}^{N+1}}{1-\|B\|_{\mathrm{op}}}}\\
& \le \mu_B(R) + \sum_{l=1}^{N} 5^l \mu_B(R/2^l) \|B\|^l_{\mathrm{op}} / (1-\|B\|_{\mathrm{op}}).\\
& < \mu_B(R) + \sum_{l=1}^{N} 2^{-(n+1)l} \mu_B(R/2^l) / (1-\|B\|_{\mathrm{op}}),
\end{align*}
where the last estimate follows from $\|B\|^l_{\mathrm{op}} < 1/(2^{n+1}\cdot 5)$. From $\mu_B(R) \le \|B\|_{\mu,n} \cdot R^{-n}$ for all $R \ge 1$ we get for $R > 2^N$
\begin{align*}
\mu_{N+1}(R) & \le \|B\|_{\mu,n} \cdot R^{-n} + \sum_{l=1}^N \underbrace{2^{-(n+1)l} (R/2^l)^{-n}}_{= 2^{-l} \cdot R^{-n}} \frac{\|B\|_{\mu,n}}{1-\|B\|_{\mathrm{op}}}\\
& \le \left( \|B\|_{\mu,n} + \frac{\|B\|_{\mu,n}}{1-\|B\|_{\mathrm{op}}} \right) \cdot R^{-n}\\
& =: \nu_{N+1}(R)
\end{align*}
and the functions $\nu_{N+1}(R)$ have the needed property $\nu_{N+1}(2^{N+1}) \le \nu_N(2^N) \cdot 1/2^n$. Note that these functions do not even depend on $N$. So finally we get the sought estimate
\begin{equation}
\label{eqj4re}
\|(\id - B)^{-1}\|_{\mu,n} \le \max \Big\{ \|(\id - B)^{-1}\|_{\op}, 1 + \|B\|_{\mu,n} + \frac{\|B\|_{\mu,n}}{1-\|B\|_{\mathrm{op}}} \Big\}.
\end{equation}

Note that we actually not only have to show that $\|(\id - B)^{-1}\|_{\mu,n}$ is finite, but that this operator lies in $\Cpoln(E)$. This means that we also have to show that $(\id - B)^{-1}$ is approximable by operators of finite propagation in the norm $\|\largecdot\|_{\mu,n}$. But this follows from \eqref{eqj4re} since this estimate shows that inversion is continuous and therefore, since $B$ is approximable by finite propagation operators, the same will be also true for $(\id - B)^{-1}$.
\end{proof}

\begin{defn}\label{defn:Cpol}
\[\Cpol(E) := \bigcap_{n \in \IN} \Cpoln(E)\]
equipped with the obvious \Frechet topology.
\qed
\end{defn}

Equivalently we may describe $\Cpol(E)$ as being the closure of $\IC_{-\infty}^\ast(E)$ under the family of norms $(\|\largecdot\|_{-k,l}, \|\largecdot^\ast\|_{-k,l}, \|\largecdot\|_{\mu,n}, \|\largecdot^\ast\|_{\mu,n})_{k,l \in\IN_0, n \in \IN}$.

Elements of $\Cpol(E)$ are smoothing operators such that they and their adjoint admit dominating functions which decay faster than any polynomial. It is an open question if we can \emph{define} $\Cpol(E)$ by this condition. The problem lies in showing that quasilocal operators are approximable by finite propagation operators; cf.~\cite[Section~6.2]{engel_indices_UPDO}. In the case of $M$ having polynomial volume growth we will give a solution to this problem in Corollary~\ref{corjknd9023}.

\begin{prop}\label{prop:Cpol_Frechet_smooth}
$\Cpol(E)$ is a \Frechet $^\ast$-algebra, dense in every $\Cpoln(E)$ and smooth.
\end{prop}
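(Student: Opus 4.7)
The plan is to view $\Cpol(E)$ as the projective limit $\varprojlim_n \Cpoln(E)$ of the Fréchet $^\ast$-algebras produced by Lemma \ref{lem:Cpoln_Frechet_smooth}, and to reduce every claim to the corresponding statement for the individual $\Cpoln(E)$. First I would check that passing from $n$ to $n+1$ gives a continuous inclusion $C^\ast_{\mathrm{pol},n+1}(E) \hookrightarrow \Cpoln(E)$: for $R \ge 1$ a bound $\mu_A(R) \le D/R^{n+1}$ immediately forces $\mu_A(R) \le D/R^n$, while for $R \le 1$ the estimate $\mu_A(R) \le \|A\|_{\op}$ built into the definition of $\|\largecdot\|_{\mu,n}$ absorbs everything into the operator-norm seminorm. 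Consequently the topology on $\Cpol(E)$ defined by the full countable family of seminorms coincides with the initial topology with respect to the maps $\Cpol(E) \hookrightarrow \Cpoln(E)$, and $\Cpol(E)$ becomes a Fréchet space: a Cauchy sequence is Cauchy in every $\Cpoln(E)$, the limits obtained there necessarily coincide because they agree inside the Hausdorff space $\C(E)$, and the common limit lies in $\bigcap_n \Cpoln(E) = \Cpol(E)$. Joint continuity of multiplication, continuity of the involution, and Hausdorffness are then inherited seminorm-by-seminorm from Lemma \ref{lem:Cpoln_Frechet_smooth}.

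Density of $\Cpol(E)$ in each $\Cpoln(E)$ is immediate from the inclusions $\IC_{-\infty}^\ast(E) \subset \Cpol(E) \subset \Cpoln(E)$ together with the fact that, by its very definition, $\Cpoln(E)$ is the closure of $\IC_{-\infty}^\ast(E)$ under its own seminorms.

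For smoothness the strategy is to transport holomorphic functional calculus through the projective limit. Given $A \in \Cpol(E)$ and a function $f$ holomorphic on a neighbourhood of the spectrum of $A$ in $\Cpol(E)^+$, the continuous unital inclusions $\Cpol(E)^+ \hookrightarrow \Cpoln(E)^+ \hookrightarrow C_u^\ast(E)^+$ can only enlarge spectra, so $f$ is in particular holomorphic on a neighbourhood of $\sigma_{\Cpoln(E)^+}(A)$. The smoothness of $\Cpoln(E)$ from Lemma \ref{lem:Cpoln_Frechet_smooth} then yields $f(A) \in \Cpoln(E)$ for every $n$, whence $f(A) \in \Cpol(E)$. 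The same argument applied to $M_k(\Cpol(E)) = \bigcap_n M_k(\Cpoln(E))$ — using that each $M_k(\Cpoln(E))$ is already known to be closed under holomorphic functional calculus — shows that all matrix algebras over $\Cpol(E)$ are closed under it as well.

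The only delicate point, though a routine one, is establishing the nesting of the $\mu$-seminorms and the resulting completeness of the intersection; once these are in place, every remaining assertion of the proposition is an essentially automatic propagation of Lemma \ref{lem:Cpoln_Frechet_smooth} through the projective limit.
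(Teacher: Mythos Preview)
Your proposal is correct and follows the same approach as the paper: everything is reduced to Lemma~\ref{lem:Cpoln_Frechet_smooth} together with the density of $\IC_{-\infty}^\ast(E) \subset \Cpol(E)$ in each $\Cpoln(E)$. The paper's own proof is a one-liner asserting exactly this reduction, whereas you have spelled out the projective-limit details (nesting of the $\mu$-seminorms, completeness of the intersection, and passage of holomorphic functional calculus through the intersection) that the paper leaves implicit.
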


\begin{proof}
Use Lemma \ref{lem:Cpoln_Frechet_smooth} and that $\IC_{-\infty}^\ast(E) \subset \Cpol(E)$ is dense in $\Cpoln(E)$.
\end{proof}

\begin{cor}\label{cor:smooth_subalgebras}
Every map in the chain of dense, continuous inclusions
\[\Cpol(E) \to \cdots \to C^\ast_{\mathrm{pol},n+1}(E) \to \Cpoln(E) \to \cdots \to \C(E) \to C_u^\ast(E)\]
induces isomorphisms on $K$-theory.
\end{cor}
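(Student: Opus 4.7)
The strategy is to apply the classical density isomorphism theorem in $K$-theory to each of the algebras in the chain, viewed as a smooth dense Fréchet $^\ast$-subalgebra of the $C^\ast$-algebra $C_u^\ast(E)$, and then to extract the claim about the intermediate maps by two-out-of-three.

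First I would observe that the property of being smooth (closed under holomorphic functional calculus, together with the same for all matrix algebras) is transitive along a chain of continuous inclusions. Indeed, if $A \hookrightarrow B \hookrightarrow C$ are continuous $^\ast$-inclusions with $A$ smooth in $B$ and $B$ smooth in $C$, then for $a \in A$ the spectrum in $C$ equals the spectrum in $B$ (by smoothness of $B \subset C$), which equals the spectrum in $A$ (by smoothness of $A \subset B$); moreover, for $f$ holomorphic on a neighbourhood of the spectrum, $f(a)$ computed in $C$ coincides with $f(a)$ computed in $B$ and then with $f(a)$ computed in $A$, so $A$ is smooth in $C$. The same reasoning applies to matrix algebras by using Schweitzer's trick. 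Combining this with Propositions~\ref{prop:smooth_Roe_holomorphically_closed} and~\ref{prop:Cpol_Frechet_smooth} and Lemma~\ref{lem:Cpoln_Frechet_smooth}, each of the algebras $\Cpol(E)$, $\Cpoln(E)$ (for every $n$) and $\C(E)$ is a dense, smooth Fréchet $^\ast$-subalgebra of the $C^\ast$-algebra $C_u^\ast(E)$.

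Next I would invoke the density isomorphism theorem for $K$-theory (Bost's Oka principle, cf.\ also Schweitzer's work cited in the paper): a smooth dense Fréchet subalgebra of a Banach algebra has the same topological $K$-theory as the ambient algebra. Applied to each of $\Cpol(E) \hookrightarrow C_u^\ast(E)$, $\Cpoln(E) \hookrightarrow C_u^\ast(E)$ and $\C(E) \hookrightarrow C_u^\ast(E)$, this yields that each of these inclusions induces an isomorphism on $K$-theory. Since every arrow in the displayed chain is a factor of one such total inclusion (with the remaining factor again being an inclusion of the same type), the two-out-of-three property for isomorphisms forces every individual map in the chain to induce an isomorphism on $K$-theory as well.

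There is no real obstacle to overcome beyond the bookkeeping just outlined: all the analytic content (smoothness, density, Fréchet structure) has been established in the preceding results, and the conclusion is a formal consequence of the density isomorphism theorem together with transitivity of holomorphic functional calculus. The only point that requires a moment of care is the transitivity step for matrix algebras, which ensures that a single appeal to the density theorem (applied to the composite inclusion into the $C^\ast$-algebra $C_u^\ast(E)$) legitimately delivers the statement for each algebra in the chain simultaneously.
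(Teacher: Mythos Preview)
Your proposal is correct and follows the same route as the paper, which records the corollary as an immediate consequence of the preceding smoothness and density results (Proposition~\ref{prop:smooth_Roe_holomorphically_closed}, Lemma~\ref{lem:Cpoln_Frechet_smooth}, Proposition~\ref{prop:Cpol_Frechet_smooth}) via the standard density theorem for $K$-theory of holomorphically closed subalgebras. Your transitivity and two-out-of-three discussion spells out explicitly what the paper leaves implicit in the phrase ``follows immediately,'' but it is not a different argument.
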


\begin{rem}\label{rem:defn_seminorms_uRoe}
In the above discussion we have only treated the dominating function for operators $L^2(E) \to L^2(E)$, but for Lemma \ref{lem:decay_integral_kernel} in the next subsection we need to treat also the case $H^{-k}(E) \to H^l(E)$. So let us define on $\IC_{-\infty}^\ast(E)$ the norms
\begin{equation}
\label{eqkjnd90324}
\|A\|_{\mu,n}^{-k,l} := \inf \{D > 0\colon \mu_A^{-k,l}(R) \le D/R^n \ \forall R > 1\},
\end{equation}
where $\mu_A^{-k,l}(R)$ denotes the smallest possible choice of dominating function for the operator $A \colon H^{-k}(E) \to H^l(E)$. We redefine now $\Cpoln(E)$ to be the closure of $\IC_{-\infty}^\ast(E)$ under the family of norms $(\|\largecdot\|_{-k,l}, \|\largecdot^\ast\|_{-k,l}, \|\largecdot\|_{\mu,n}^{-k,l}, \|\largecdot^\ast\|_{\mu,n}^{-k,l})_{k,l \in\IN_0}$ and set $\Cpol(E) := \bigcap_{n \in \IN} \Cpoln(E)$. We have to argue now why Lemma \ref{lem:Cpoln_Frechet_smooth} still holds using this new definition.

The crucial point in the proof of Lemma \ref{lem:Cpoln_Frechet_smooth} is the inverse closedness, i.e., to give an estimate for $\|(\id - B)^{-1}\|_{\mu,n}^{-k,l}$. For this we will use the same argument as in the proof of Proposition \ref{prop:smooth_Roe_holomorphically_closed}: we factor the operator $B^{r+2}\colon H^{-k}(E) \to H^l(E)$ as
\begin{equation}
\label{eq:factoring_powers_B}
H^{-k}(E) \stackrel{B}\longrightarrow L^2(E) \stackrel{B^r}\longrightarrow L^2(E) \stackrel{B}\longrightarrow H^l(E).
\end{equation}
We get that $(\id - B)^{-1} = \sum_{r\ge 0} B^r$ holds also with convergence in every $\|\largecdot\|_{-k,l}$-norm and the error of the approximation of $(\id - B)^{-1}$ by this sum is multiplied by $1/(2^{n+1} \cdot 5)$ on every step (i.e., increasing $r$ by one). We use now the same notation as in the proof of Lemma \ref{lem:Cpoln_Frechet_smooth} and recapitulate the argument for inverse closedness from there. We have to give an estimate for the dominating functions $\mu_N^{-k,l}(R)$ of $\sum_{r=0}^N B^r$. At this point in the proof of Lemma \ref{lem:Cpoln_Frechet_smooth} we applied Lemma \ref{lem:dominating_function_powers} to get a suitable estimate. We do the same here, but incorporate the Factoring \eqref{eq:factoring_powers_B} of $B^{r+2}$: by \cite[Proposition 5.2]{roe_index_1} we get
\[\mu_{B^{r+1}}^{-k,l}(R) \le \|B\|^{r-1}_{0,0} \|B\|_{0,l} \cdot 2\mu_B^{-k,0}(R/2) + \mu_{B^r}^{0,l}(R/2) \cdot 3\|B\|_{-k,0}\]
and into this estimate we plug in the estimate
\[\mu_{B^r}^{0,l}(R/2) \le \|B\|_{0,l} \cdot 2\mu^{0,0}_{B^{r-1}}(R/4) + \mu_B^{0,l}(R/4) \cdot 3\|B\|_{0,0}^{r-1}.\]
For $\mu^{0,0}_{B^{r-1}}(R/4)$ the same estimates that we used in the proof of Lemma \ref{lem:Cpoln_Frechet_smooth}.
\qed
\end{rem}

\subsection{General kernel estimates}
\label{subsec0987}

In this section the manifold $M$ and the vector bundle $E \to M$ are always assumed to have bounded geometry.

\begin{lem}[{cf.~\cite[Proposition 5.4]{roe_index_1}}]\label{lem:decay_integral_kernel}
Let $A\colon H^{-\infty}_\iota(E) \to H^\infty(E)$ be a quasi-local smoothing operator. Then we have for its integral kernel $k_A(x,y) \in C_b^\infty(E \boxtimes E^\ast)$ the following estimates:
\begin{align}
\int_{M - B_R(y)} \|\nabla^r_x \nabla^l_y k_A(x,y)\|^2 dx & < C \cdot \mu_A^{-k,r}(R)^2 \text{ and }\label{eqkj200}\\
\int_{M-B_R(x)} \|\nabla^l_x \nabla^r_y k_A(x,y)\|^2 dy & < D \cdot \mu_{A^\ast}^{-k,r}(R)^2.\notag
\end{align}
The estimates hold for every $r,l \in \IN_0$ and every $k > l + \dim(M)/2$, and the constants $C$ and $D$ do not depend on the operator $A$, scale $R$ or points $x$, $y$ (but they do depend on the choice of $k$, $r$, $l$).
\end{lem}

\begin{proof}
Let $v \in E_y$ for $y \in M$ be a vector of unit norm and let $\delta_v$ be the functional on $C_b^0(E)$ given by $s \mapsto \langle s, v\rangle_y$. Due to the Sobolev embedding theorem (cf.~Theorem~\ref{thm:sobolev_embedding}), the functional $\delta_v$ belongs to $H^{-k}(E)$ for every $k > \dim(M)/2$. Since $A$ maps $H^{-k}(E)$ quasi-locally to $H^0(E)$, we have $\|A \delta_v\|_{H^0,M-B_r(y)} \le \mu_A^{-k,0}(R) \cdot \|\delta_v\|_{H^{-k}}$. By definition we have the chain of equalities
\[\|A\delta_v\|_{H^0,M-B_r(y)}^2 = \int_{M-B_R(y)} \|(A\delta_v)(x)\|^2 \, dx = \int_{M-B_R(y)} \|\langle k_A(x,y),v\rangle_y\|^2 \, dx.\]
Going now through an orthonormal basis of $E_y$ and summing up, we conclude
\[\int_{M-B_R(y)} \| k_A(x,y) \|^2 \, dx \le \mu_A^{-k,0}(R)^2 \cdot \max_{v \in E_y, \|v\|=1} \|\delta_v\|^2_{H^{-k}} \cdot \dim(E).\]

The get the corresponding estimates with the derivatives, we choose at the point $y \in M$ unit vector fields $X_1, \ldots, X_l$ on $M$ and a unit vector $v \in E_y$, and look at the functional $\delta_v^{X_1, \ldots, X_l}$ on $C_b^l(E)$ given by $s \mapsto \langle \nabla_{X_l} \cdots \nabla_{X_1} s, v\rangle_y$. It belongs to $H^{-k}(E)$ for every $k > l+\dim(M)/2$ and we can carry out the above arguments for the case of no derivatives, where we exploit that $A$ maps $H^{-k}(E)$ quasi-locally to $H^r(E)$.

To get the estimates with the roles of $x$ and $y$ interchanged, we just have to pass to the adjoint operator since $k_A(x,y)^\ast = k_{A^\ast}(y,x)$. Note that Roe gives a separate argument for this case, relying on the fact that $A$ maps quasi-locally to $H^\infty(E)$.
\end{proof}

Chen--Wei defined a metric version of Property (RD) by demanding that a certain \Frechet space, which they call $BS_2(X)$, is contained in the uniform Roe algebra \cite{chen_wei}. The following proposition can be regarded as proving that $\Cpol(M)$ which we defined in the previous section (Definition~\ref{defn:Cpol} and Remark~\ref{rem:defn_seminorms_uRoe}) satisfies in a smooth way (i.e., including derivatives, and using integrals instead of summation) the estimates in order to be a subspace of $BS_2(X)$.\footnote{Yu \cite[Page~470]{yu_cyclic} also considered a similar space as $BS_2(X)$, but he used sub-exponential instead of polynomial decay.} But note that $\Cpol(M)$ behaves very differently from $BS_2(X)$, e.g., $BS_2(X)$ is an algebra (i.e., closed under composition) only if $X$ has polynomial growth \cite[Theorem~3.1]{chen_wei}, but $\Cpol(M)$ is always an algebra.

\begin{prop}
\label{lem_decay_kernel_weights}
Let $A\colon H^{-\infty}_\iota(E) \to H^\infty(E)$ be a quasi-local smoothing operator.

Then we have for its integral kernel $k_A(x,y) \in C_b^\infty(E \boxtimes E^\ast)$ the following estimates:
\begin{align}
\int_M \|\nabla^r_x \nabla^l_y k_A(x,y)\|^2 \cdot d(x,y)^N dx & < C^\prime \cdot \big(\|A\|^{-k,r}_{\mu,N/2+1}\big)^2 \text{ and }\label{eqk876j200}\\
\int_M \|\nabla^l_x \nabla^r_y k_A(x,y)\|^2 \cdot d(x,y)^N dy & < D^\prime \cdot \big(\|A^\ast\|^{-k,r}_{\mu,N/2+1}\big)^2.\notag
\end{align}
The estimates hold for every $r,l \in \IN_0$, every $k > l + \dim(M)/2$ and every $N \in \IN_0$, and the constants $C^\prime$ and $D^\prime$ do not depend on the operator $A$ or points $x$, $y$ (but they do depend on the choice of $k$, $r$, $l$, and $N$).
\end{prop}

\begin{proof}
Let us first treat the case $r,l = 0$. Let $N \in \IN_0$ be given. In the following estimate we have used \eqref{eqkj200}, and $k > \dim(M)/2$ is fixed:
\begin{align*}
\int_2^\infty \int_{M - B_R(y)} \|k_A(x,y)\|^2 dx \cdot R^N dR & < \int_2^\infty C \cdot \mu_A^{-k, 0}(R)^2 \cdot R^N dR\\
& \le \int_2^\infty C \cdot \big(\|A\|^{-k,0}_{\mu,N/2+1}\big)^2 / R^{N+2} \cdot R^N dR\\
& = C/2 \cdot \big(\|A\|^{-k,0}_{\mu,N/2+1}\big)^2.
\end{align*}
We can now rearrange the left hand side of the above estimate to get the left hand side of the following estimate, and then do an elementary estimate to arrive at
\[\int_M \int_2^{d(x,y)} \|k_A(x,y)\|^2 \cdot R^N dR\, dx \ge \int_M \|k_A(x,y)\|^2 \cdot \frac{d(x,y)^N}{N+1} dx\]
and so we get the first claimed estimate
\[\int_M \|k_A(x,y)\|^2 \cdot d(x,y)^N dx \le C^\prime \cdot \big(\|A\|^{-k,0}_{\mu,N/2+1}\big)^2.\]
The estimate for the role of $x$ and $y$ interchanged follows from applying the above estimate to the adjoint operator, and the estimates including the higher derivatives of the integral kernel follow analogously from \eqref{eqkj200}.
\end{proof}

Let us strengthen the estimates from the previous proposition by estimating additionally the rate of decay.

\begin{cor}
Let $A\colon H^{-\infty}_\iota(E) \to H^\infty(E)$ be a quasi-local smoothing operator.

Then we have for its integral kernel $k_A(x,y) \in C_b^\infty(E \boxtimes E^\ast)$ the following estimates:
\begin{align}
\int_{M-B_R(y)} \|\nabla^r_x \nabla^l_y k_A(x,y)\|^2 \cdot d(x,y)^N dx & \le C_0 \cdot \mu_{A^\ast_{r,l}}^{0,s}(R) \cdot \|A\|^{-k,r}_{\mu,N+1} \text{ and }\label{eqoiuer6j200}\\
\int_{M-B_R(x)} \|\nabla^l_x \nabla^r_y k_A(x,y)\|^2 \cdot d(x,y)^N dy & \le D_0 \cdot \mu_{A_{l,r}}^{0,s}(R) \cdot \|A^\ast\|^{-k,r}_{\mu,N+1}.\notag
\end{align}
The estimates hold for every $r,l \in \IN_0$, every $k > l + \dim(M)/2$, $s > \dim(M)/2$ and every $N \in \IN_0$, and the constants $C_0$ and $D_0$ do not depend on the operator $A$ or points $x$, $y$ (but they do depend on the choice of $k$, $r$, $l$, $s$ and $N$).

The operator $A_{l,r}$ is the quasi-local smoothing operator given by the kernel $\nabla^l_x \nabla^r_y k_A(x,y)$.
\end{cor}

\begin{proof}
The idea for the following proof comes from the second part of John Roe's proof of \cite[Proposition 5.4]{roe_index_1}.

The above Proposition~\ref{lem_decay_kernel_weights} states that the function $\phi_{y,N}(\largecdot) := k_{A^\ast}(\largecdot, y) \cdot d(\largecdot, y)^N$ is square-integrable. Since $A$ maps $H^0(E)$ quasi-locally to $H^s(E)$ for all $s > \dim(M) / 2$, and the latter is continuously embedded in $C_b^0(E)$ by the Sobolev Embedding Theorem~\ref{thm:sobolev_embedding}, we conclude $|(Au)(y)| \le E \cdot \mu_A^{0,s}(R) \cdot \|u\|_{H^0}$ for all square-integrable functions $u$ on $M$ supported outside of $B_R(y)$, and $E$ is the constant coming from the Sobolev embedding theorem. Setting
\[\phi^\prime_{y,N}(x) := \begin{cases}k_{A^\ast}(x, y) \cdot d(x, y)^N & \text{for } d(x,y) > R\\
0 & \text{otherwise}\end{cases}\]
we therefore conclude
\[\int_{M-B_R(y)} \|k_A(y,x)\|^2 \cdot d(y,x)^N dx \le E \cdot \mu_A^{0,s}(R) \cdot \sqrt{D^\prime} \cdot \|A^\ast\|^{-k,0}_{\mu,N+1}\]
for every $k > \dim(M) / 2$.

As usual, passing to the adjoint operator proves the estimate with the roles of $x$ and $y$ changed, and the estimates incorporating derivatives are derived analogously (but here one first has to prove that the operator given by the kernel $\nabla^l_x \nabla^r_y k_A(x,y)$ is indeed a quasi-local smoothing operator).
\end{proof}

\subsection{Space of integral kernels on manifolds of polynomial growth}

This section provides the main analytical meat for the proof of Theorem F. Our main task in this section is to relate the norms on $\Cpol(E)$, which are certain operator norms, to norm on the integral kernels of such operators (which are so-called unconditional norms). The main result in this section, Theorem~\ref{thmjnksd9023}, is exactly of this kind.

\begin{defn}\label{defnrtzu}
Let $k \in C_b^\infty(E \boxtimes E^\ast)$ be supported in a uniform neighbourhood of the diagonal. We define for it the semi-norms
\begin{equation}
\label{eqjnk8743}
\|k\|_{1,x}^{r,l,N} := \sup_{y \in M} \int_M \|\nabla_x^r \nabla_y^l k(x,y)\| \cdot d(x,y)^N dx,
\end{equation}
and analogously we define the semi-norms $\|k\|_{1,y}^{r,l,N}$ where we take the supremum over all $x \in M$ and integrate with respect to $y \in M$.

We define $\WDpol(E \boxtimes E^\ast)$ as the completion of the space of all functions $k \in C_b^\infty(E \boxtimes E^\ast)$ which are supported in a uniform neighbourhood of the diagonal under the above defined family of semi-norms (i.e., all $r,l,N \in \IN_0$ and both $x$,$y$ as subscripts).
\qed
\end{defn}

The Sobolev embedding theorem (see, e.g., Aubin \cite[Theorem 2.21]{aubin_nonlinear_problems}) can be used to show that we have a continuous inclusion
\begin{equation}
\label{eqo24ew}
\WDpol(E \boxtimes E^\ast) \subset C_b^\infty(E \boxtimes E^\ast).
\end{equation}

\begin{lem}
$\WDpol(E \boxtimes E^\ast)$ is a Fr\'{e}chet-$^\ast$-algebra with jointly continuous multiplication (which is given by convolution).
\end{lem}

\begin{proof}
The semi-norms $\|\largecdot\|_{1,x}^{0,0,0}$ and $\|\largecdot\|_{1,y}^{0,0,0}$ are norms and therefore $\WDpol(E \boxtimes E^\ast)$ is a Hausdorff space. It remains to treat the multiplication. For the sake of simplicity, let us only do the case $r,l = 0$ and the subscript $x$. Then we have the estimate
\begin{align*}
\|k \ast l\|^{0,0,N}_{1,x} & = \int_M \Big\| \int_M k(x,z) l(z,y) dz \Big\| \cdot d(x,y)^N dx\\
& \le \int_M \int_M \Big\| k(x,z) \Big\| \cdot \Big\| l(z,y) \Big\| \cdot d(x,y)^N dz dx.
\end{align*}
Using the triangle inequality $d(x,y) \le d(x,z) + d(z,y)$ and then treating the summands of $(d(x,z)+d(z,y))^N = \sum_{N_1 + N_2 = N} C(N_1,N_2) d(x,z)^{N_1}d(z,y)^{N_2}$ separately, we have
\begin{align*}
\int_M \int_M \Big\| k(x,z) \Big\| & \cdot \Big\| l(z,y) \Big\| \cdot C(N_1,N_2) d(x,z)^{N_1}d(z,y)^{N_2} dz dx\\
& \le \int_M \|k\|_{1,x}^{0,0,N_1} \cdot \Big\|l(z,y)\Big\| \cdot C(N_1,N_2) d(z,y)^{N_2} dz\\
& \le \|k\|_{1,x}^{0,0,N_1} \cdot \|l\|_{1,x}^{0,0,N_2} \cdot C(N_1,N_2),
\end{align*}
where we have changed the order of integration for the first inequality.

This shows that $\|k \ast l\|^{0,0,N}_{1,x}$ is finite, and furthermore, it shows that multiplication is jointly continuous. This finishes this proof.
\end{proof}

\begin{lem}\label{lem8923c}
We have a continuous inclusion $\WDpol(E \boxtimes E^\ast) \to \C(E)$.\footnote{See Definition~\ref{defnj23wef} for the definition of $\C(E)$.}
\end{lem}

\begin{proof}
Let $A_k$ be the operator corresponding to a kernel $k$. Then the operator norm of $A_k$ can be estimated by
\begin{equation}
\label{eqnj9xcv2}
\|A_k\|_\op \le \sup_{x \in M} \Big( \int_M \|k(x,y)\| dy \Big)^{1/2} \cdot \sup_{y \in M} \Big( \int_M \|k(x,y)\| dx \Big)^{1/2}.
\end{equation}
Correspondingly, the norm of $A_k\colon H^{-r}(E) \to H^l(E)$ can be similarly estimated by using the appropriate derivatives of the kernel. So we see that we can estimate the norms of $A_k \in \C(E)$ by the norms \eqref{eqjnk8743} of $k \in \WDpol(E \boxtimes E^\ast)$.\footnote{In fact, we only use here the norms with $N=0$.}
\end{proof}

\begin{defn}
A Riemannian manifold $M$ is said to have polynomial volume growth, if there exist $V,T \in \IR_{> 0}$ such that $\vol B_R(y) \le V (R+1)^T$ for all $y \in M$.
\qed
\end{defn}

\begin{prop}\label{propsdj2309099}
Let $M$ have bounded geometry and polynomial volume growth.

Let $A$ be a quasi-local smoothing operator with $\|A\|_{\mu,n}^{-k,l} < \infty$ for every $k,l,n \in \IN_0$.\footnote{See~\eqref{eqkjnd90324} for the definition of the norms $\|\largecdot\|_{\mu,n}^{-k,l}$.} Then for every $y \in Y$ we have for its integral kernel the estimates
\[\int_M \|\nabla^r_x \nabla^l_y k(x,y)\| \cdot d(x,y)^N dx < C_A(r,l,N) < \infty\]
for all $r$, $l$ and $N$. The constants $C_A(r,l,N)$ are independent of the point $y$.

If $\|A^\ast\|_{\mu,n}^{-k,l} < \infty$, then we have the corresponding estimates with $x$ and $y$ interchanged.
\end{prop}

\begin{proof}
We rewrite the integral over the manifold $M$ as the sum of the integrals over the annuli $R \le d(x,y) \le R+1$ for $R \in \IN_0$. On the annulus $R \le d(x,y) \le R+1$ we use the Cauchy--Schwarz inequality, the estimate $d(x,y) \le R+1$ and the estimate from above of its volume by $V (R+2)^T$, to get the estimate
\[\Big( \int_R^{R+1} \|\nabla^r_x \nabla^l_y k(x,y)\| \cdot d(x,y)^N dx \Big)^2 \le \int_R^{R+1} \|\nabla^r_x \nabla^l_y k(x,y)\|^2 \cdot (R+1)^{2N} dx \cdot V (R+2)^T.\]
Now we use \eqref{eqkj200} to estimate $\int_R^{R+1} \|\nabla^r_x \nabla^l_y k(x,y)\|^2 dx$ from above by $C \cdot \mu_A^{-k,r}(R)^2$. Since $\|A\|_{\mu,n}^{-k,l} < \infty$ by assumption, we have $\mu_A^{-k,r}(R) \le E/R^n$ for some constant $E > 0$ and a choice of $n$ which we will fix later.\footnote{Actually, by definition of $\|\largecdot\|_{\mu,n}^{-k,l}$ we can choose $E = \|A\|_{\mu,n}^{-k,l}$.} So we get
\[\Big( \int_R^{R+1} \|\nabla^r_x \nabla^l_y k(x,y)\| \cdot d(x,y)^N dx \Big)^2 \le C E^2 / R^{2n} \cdot (R+1)^N \cdot V (R+2)^T\]
and from this our final estimate
\begin{align*}
\int_M \|\nabla^r_x \nabla^l_y k(x,y)\| \cdot d(x,y)^N dx & = \sum_{R=0}^\infty \int_R^{R+1} \|\nabla^r_x \nabla^l_y k(x,y)\| \cdot d(x,y)^N dx\\
& \le \sum_{R=0}^\infty \sqrt{C} E / R^n \cdot (R+1)^{N/2} \cdot \sqrt{V} (R+2)^{T/2}\\
& < \infty,
\end{align*}
where we know now that we have to choose $n > N/2 + T/2 + 1$.

Under the assumption $\|A^\ast\|_{\mu,n}^{-k,l} < \infty$ we get the estimate for $x$ and $y$ interchanged by the same argument, but using from Lemma~\ref{lem:decay_integral_kernel} the second estimate.
\end{proof}

\begin{cor}\label{cor9023932}
Let $M$ have polynomial volume growth. Then we have a continuous inclusion $\Cpol(E) \to \WDpol(E \boxtimes E^\ast)$.
\end{cor}

\begin{proof}
From Proposition~\ref{propsdj2309099} it follows that we have a map $\uRoe(E) \to \WDpol(E \boxtimes E^\ast)$. It remains to look more closely into the proof of Proposition~\ref{propsdj2309099} to see that this map is continuous and therefore extends to the completion $\C(E)$: the constant $C_A(r,l,N)$ derived in the proof is given by
\[\sum_{R=0}^\infty \sqrt{C} E / R^n \cdot (R+1)^{N/2} \cdot \sqrt{V} (R+2)^{T/2}\]
and we can choose $E = \|A\|_{\mu,n}^{-k,l}$, where we fix an $n > N/2 + T/2 + 1$. This observation finishes this proof.
\end{proof}

\begin{thm}\label{thmjnksd9023}
Let $M$ be a manifold of bounded geometry and of polynomial volume growth and let $E \to M$ be a vector bundle over $M$ of bounded geometry.

Then we have an isomorphism
\[\Cpol(E) \xrightarrow{\cong} \WDpol(E \boxtimes E^\ast).\]
\end{thm}

\begin{proof}
We know from Corollary~\ref{cor9023932} that the map $\Cpol(E) \to \WDpol(E \boxtimes E^\ast)$ is continuous. So we have to investigate the inverse map, i.e., the map given by assigning a kernel its integral operator. From Lemma~\ref{lem8923c} we know that this inverse map is continuous if regarded as mapping into $\C(E)$. So it remains to show that this inverse map is also continuous for the norms $\|\largecdot\|_{\mu,n}^{-k,l}$ and $\|\largecdot\|_{\mu,n}^{-k,l}$ on $\Cpol(E)$. We will only do the argument for $\|\largecdot\|_{\mu,n}^{-k,l}$ since the argument for the corresponding norm of the adjoint is completely analoguous. Furthermore, for the sake of simplicity, we will only treat the case $k,l = 0$ since the other cases are similar.

So given $k \in \WDpol(E \boxtimes E^\ast)$ we have to bound the dominating function $\mu_{A_k}^{0,0}(R)$ of the corresponding integral operator. For $R \in \IN$ let $A_R$ be an operator of propagation~$\le R$. From Definition~\ref{eqdom94} of the dominating function, we conclude that we have the estimate $\mu_{A_k}^{0,0}(R) \le \|A_k - A_R\|_\op$ for every $R \in \IN$, i.e., to give a good bound on the dominating function of an operator means to give a good approximation of it by operators of small propagation. To get $A_R$ we will just chop the integral kernel $k$ down to the neighbourhood $d(x,y) \le R$ of $M \times M$. This is not a smooth kernel anymore, but using suitable partitions of unity this can be rectified. So let us therefore just work with this chopped kernel, and let us denote the resulting kernel of $A_k - A_R$ by $k^R$, i.e, $k^R(x,y) = k(x,y)$ for all $d(x,y) > R$ and $k^R(x,y) = 0$ otherwise. Let us do the following estimates:
\begin{align*}
R^N \cdot \int_M \|k^R(x,y)\| dy & \le \int_M \|k^R(x,y)\|\cdot d(x,y)^N dy\\
& \le \int_M \|k(x,y)\|\cdot d(x,y)^N dy\\
& \le \|k\|_{1,y}^{0,0,N}.
\end{align*}
Combining this with \eqref{eqnj9xcv2} we arrive at the estimate
\[\|A_k - A_R\|_\op = \|A_{k^R}\|_\op \le \sqrt{\|k\|_{1,y}^{0,0,N} / R^N} \cdot \sqrt{\|k\|_{1,x}^{0,0,N} / R^N}\]
which finishes the proof since it means $\|A_k\|_{\mu,N}^{0,0} \le \sqrt{\|k\|_{1,y}^{0,0,N} \cdot \|k\|_{1,x}^{0,0,N}}$.
\end{proof}

As a by-product of the above analysis we are now able to solve, at least partially, the question whether quasi-local operators are approximable by operators of finite propagation. The following corollary solves this question in the case $M$ has polynomial volume growth and the dominating function of the quasi-local operator decays sufficiently fast. See also the discussion in \cite[Section~6.2]{engel_indices_UPDO}.

\begin{cor}\label{corjknd9023}
Let $M$ have bounded geometry and polynomial volume growth.

Then every quasi-local smoothing operator $A$ with $\|A\|_{\mu,n}^{-k,l} < \infty$ for every $k,l,n \in \IN_0$ is contained in $\Cpol(E)$, i.e., is approximable by finite propagation operators.
\end{cor}

\begin{proof}
In the above proof of Theorem~\ref{thmjnksd9023} we have derived the estimate
\begin{equation}
\label{eqnjkd02323}
\|A - A_R\|_\op \le \sqrt{\|k\|_{1,y}^{0,0,N} / R^N} \cdot \sqrt{\|k\|_{1,x}^{0,0,N} / R^N}
\end{equation}
for every $N \in \IN$, where $k$ is the integral kernel of $A$. Although in that theorem we assume $A \in \Cpol(E)$, its proof relies on Proposition~\ref{propsdj2309099} which only assumes that $\|A\|_{\mu,n}^{-k,l} < \infty$, i.e., for the above estimate we do not need $A$ to be alrady approximable by finite propagation operators. Now \eqref{eqnjkd02323} shows that the operator $A_R$ approximate $A$ in operator norm and each $A_R$ has propagation $\le R$. Together with the corresponding estimates for the derivatives, we conclude $A \in \C(E)$.

It remains to treat the norms $\|\largecdot\|_{\mu,n}^{-k,l}$ and we will again only discuss the case $k,l = 0$. We will show that $\|A - A_R\|_{\mu,n}^{0,0}$ goes to $0$ as $R \to \infty$. From the discussion in the proof of Theorem~\ref{thmjnksd9023} we get that a dominating function $\mu_{A-A_R}^{0,0}(R^\prime)$ for $A-A_R$ is given by $\|(A-A_R)-A_{R,R^\prime}\|_\op$, where $A_{R,R^\prime}$ is an operator with propagation $\le R^\prime$. Again just chopping down the integral kernel of $A - A_R$, we can use for $A_{R,R^\prime}$ the integral kernel given by $k^{R^\prime}(x,y) = k(x,y)$ for all $R \le d(x,y) \le R^\prime$ and $k^{R^\prime}(x,y) = 0$ otherwise, if $R^\prime \ge R$. If $R^\prime \le R$ we use $A_{R,R^\prime} = 0$. We get
\[\mu_{A-A_R}^{0,0}(R^\prime) \le \begin{cases} \sqrt{\|k\|_{1,y}^{0,0,N} / R^N} \cdot \sqrt{\|k\|_{1,x}^{0,0,N} / R^N} & \text{for } R^\prime \le R,\\ \sqrt{\|k\|_{1,y}^{0,0,N} / {R^\prime}^N} \cdot \sqrt{\|k\|_{1,x}^{0,0,N} / {R^\prime}^N}& \text{for } R^\prime \ge R.\end{cases}\]
Choosing $N > n$, we finally get
\[\|A - A_R\|_{\mu,n}^{0,0} \le \sqrt{\|k\|_{1,y}^{0,0,N} \cdot \|k\|_{1,x}^{0,0,N}} \cdot R^{n-N}\]
finishing the proof.
\end{proof}

\section{Mapping the rough assembly map to homology}

In this section we will discuss uniformly finite homology, define the semi-norms we will use on them and prove Thoerem B. Afterwards, in Section~\ref{sec:rough_character}, we will define the character map $\chi_\ast\colon \PHCocont_\ast(\Cpol(E)) \to \Hufpol_\ast(Y)$.

\subsection{Uniformly (locally) finite homology}
\label{sec:defn_ufpol_hom}

In this section we will recall the definition of uniformly finite homology and introduce the notion of polynomial connectedness.

\begin{defn}[Bounded geometry]
\label{defn:coarsely_bounded_geometry}
Let $X$ be a metric space. It is said to have \emph{bounded geometry} if it admits a subset $Y \subset X$ with the properties
\begin{itemize}
\item there is a $c > 0$ such that $B_c(Y) = X$ and
\item for all $r > 0$ there is a $K_r > 0$ such that $\card(Y \cap B_r(x)) \le K_r$ for all $x \in X$.
\end{itemize}
Such a subset $Y$ is called a \emph{discretization of $X$}.
\qed
\end{defn}

\begin{examples}\label{ex:coarsely_bounded_geometry}
Every Riemannian manifold $M$ of bounded geometry\footnote{That is to say, the injectivity radius of $M$ is uniformly positive and the curvature tensor and all its derivatives are bounded in sup-norm.} is a metric space of bounded geometry: any maximal set $Y \subset M$ of points which are at least a fixed distance apart (i.e., there is an $\varepsilon > 0$ such that $d(x, y) \ge \varepsilon$ for all $x \not= y \in Y$) will do the job. We can get such a maximal set by invoking Zorn's lemma.

If $(X,d)$ is an arbitrary metric space that is bounded, i.e., $d(x,x^\prime) < D$ for all $x, x^\prime \in X$ and some $D$, then any finite subset of $X$ will constitute a discretization.

Let $K$ be a simplicial complex of bounded geometry\footnote{That is, the number of simplices in the link of each vertex is uniformly bounded.}. If we equip $K$ with the length metric derived from barycentric coordinates, then the set of all vertices in $K$ becomes a discretization in $K$.
\qed
\end{examples}

Let us now discuss uniformly finite homology. We will first give its definition and then discuss its functoriality under rough maps (which will be defined further below).

\begin{defn}[{\cite[Section 2]{block_weinberger_1}}]\label{defn98342d}
Let $X$ be a metric space.

$C_i^\uf(X)$ denotes the vector space of all infinite formal sums $c = \sum a_{\bar x} \bar x$ with $\bar x \in X^{i+1}$ and $a_{\bar x} \in \IC$ satisfying the following three conditions (constants depending on $c$):
\begin{enumerate}
\item\label{iu23wed} There exists $K > 0$ such that $| a_{\bar x} | \le K$ for all $\bar x \in X^{i+1}$.
\item\label{ub234ewr} For all $r > 0$ exists $K_r > 0$ with $\card \{ \bar x \in B_r(\bar y) \ | \ a_{\bar x} \not= 0 \} \le K_r$ for all $\bar y \in X^{i+1}$.
\item\label{23wefd} There is $R > 0$ such that $a_{\bar x} = 0$ if $d(\bar x, \Delta) > R$; $\Delta$ is the multidiagonal in $X^{i+1}$.
\end{enumerate}

The boundary map $\partial \colon C_i^\uf(X) \to C_{i-1}^\uf(X)$ is defined by
\[\partial (x_0, \ldots, x_i) = \sum_{j=0}^i (-1)^j (x_0, \ldots, \hat x_j, \ldots, x_i)\]
and extended by linearity to all of $C_i^\uf(X)$.

The resulting homology is the \emph{uniformly finite homology} $\Huf_\ast(X)$.
\qed
\end{defn}

Block and Weinberger proved that this homology is functorial for rough maps and invariant under rough equivalences. Let us recall these notions now:

\begin{defn}[Rough maps]\label{defn:coarse_rough_maps}
Let $f\colon X \to Y$ be a (not necessarily continuous) map. We call $f$ a \emph{rough} map, if:
\begin{itemize}
\item For all $R > 0$ there is an $S > 0$ such that we have for all $x_1, x_2 \in X$
\[d(x_1, x_2) < R \Rightarrow d(f(x_1), f(x_2)) < S.\]
\item for all $R > 0$ there is an $S > 0$ such that we have for all $x_1, x_2 \in X$
\[d(f(x_1), f(x_2)) < R \Rightarrow d(x_1, x_2) < S.\]
\end{itemize}

Two (not necessarily continuous) maps $f, g \colon X \to Y$ are called \emph{close}, if there is an $R > 0$ such that $d(f(x), g(x)) < R$ for all $x \in X$.

Two metric spaces $X$, $Y$ are \emph{roughly equivalent}, if there are rough maps $f \colon X \to Y$ and $g \colon Y \to X$ such that their composites are close to the corresponding identity maps.
\qed
\end{defn}

It is clear that if $Y \subset X$ is a discretization, then $Y$ and $X$ are roughly equivalent. So especially $\Huf_\ast(X) \cong \Huf_\ast(Y)$.

%If $X$ is a simplicial complex of bounded geometry, then we have a canonical map $H^\infty_\ast(X) \to \Huf_\ast(Y)$ from simplicial $L^\infty$-homology\footnote{$L^\infty$-homology is defined by using chains which are infinite formal sums of simplices such that the coefficients are bounded in $L^\infty$-norm.} of $X$ to uniformly finite homology of the vertex set $Y \subset X$, induced by the obvious inclusion $C^\infty_\ast(X) \hookrightarrow \Cuf_\ast(Y)$. Here we equip $X$ with the metric derived from barycentric coordinates and $Y$ with the induced subspace metric. Then we have the following proposition:

%\begin{prop}[{\cite[Proposition 1]{attie_block_1}}]\label{prop:iso_Huf}
%If $X$ is a uniformly contractible\footnote{Recall that being uniformly contractible means that for every $R > 0$ there is an $S > 0$ such that every ball $B_R(x) \subset X$ is contractible inside the ball $B_S(x)$.} simplicial complex of bounded geometry, then the map induced from the inclusion $C^\infty_\ast(X) \hookrightarrow \Cuf_\ast(Y)$ is an isomorphism
%\[H^\infty_\ast(X) \stackrel{\cong}\longrightarrow \Huf_\ast(Y).\]
%\end{prop}

If the space has certain contractibility properties, then one can relate uniformly finite homology to a certain topological homology theory. Let us introduce now this topological homology theory and then relate it to uniformly finite homology.

\begin{defn}[Uniformly locally finite homology {\cite{engel_wrongway}}]
Let $X$ be a metric space.

A uniformly locally finite $n$-chain on $X$ is a (possibly infinite) formal sum $\sum_{\alpha \in I} a_\alpha \sigma_\alpha$ with $a_\alpha \in \IC$ and $\sigma_\alpha \colon \Delta^n \to X$ continuous for all $\alpha \in I$, where $I$ is some index set, satisfying the following conditions:
\begin{itemize}
\item $\sup_{\alpha \in I} |a_\alpha| < \infty$,
\item for every $r > 0$ exists $K_r < \infty$ such that the ball $B_r(x)$ of radius $r$ around any point $x \in X$ meets at most $K_r$ simplices $\sigma_\alpha$, and
\item the family of maps $\{\sigma_\alpha\}_{\alpha \in I}$ is equicontinuous.
\end{itemize}
We equip the chain groups with the usual boundary operator and denote the resulting homology by $\Hulf_\ast(X)$.
\qed
\end{defn}

\begin{defn}[$L^\infty$-homology]
Let $X$ be a simplicial complex of bounded geometry and equip it with the length metric derived from barycentric coordinates. We can define $L^\infty$-homology $H_\ast^\infty(X)$ by using as chains possibly infinite, formal sums of simplices of $X$ such that the coefficients of a chain have a common upper bound.
\qed
\end{defn}

We have a natural map $H_\ast^\infty(X) \to \Hulf_\ast(X)$ given by the ``identity'', i.e., an $n$-simplex of $X$ is considered as a function $\Delta^n \to X$.

\begin{lem}\label{lemh23wefwe}
We have $H_\ast^\infty(X) \xrightarrow{\cong} \Hulf_\ast(X)$.
\end{lem}

\begin{proof}
To construct a map $\Culf_n(X) \to C^\infty_n(X)$ we hit a ulf-chain as long with barycentric subdivision as the maximal diameter of a simplex in the chain is bigger or equal than a constant $C_n$. Here $C_n$ denotes the distance of the midpoint of the standard $n$-simplex $\Delta_n \subset \IR^n$ to the boundary $\partial \Delta_n$ of it (note that this distance goes to $0$ as $n \to \infty$ and that this is the metric that is used on each of the simplices of $X$).

After we made by the above procedure the simplices of a ulf-chain small enough, we can move the vertices of these simplices to the vertices of $X$. Since the simplices in our ulf-chain are small, this will provide us a ulf-chain whose simplices coincide with the simplices of $X$, i.e., it will provide us with a simplicial $L^\infty$-chain.
\end{proof}

\begin{defn}\label{defnjk2323}
Let $X$ be a metric space.

$X$ is said to be \emph{equicontinuously $k$-connected} if for every $q \le k$ it has the following property: any equicontinuous collection of maps $\{S^q \to X\}$ from the standard $q$-dimensional sphere to $X$ is equicontinuously contractible.

$X$ is said to be \emph{polynomially $k$-connected} if for every $q \le k$ it enjoys the following property: there is a polynomial $P$ such that if $S^q \to X$ is $L$-Lipschitz, then there exists a contraction of it which is $P(L)$-Lipschitz.
\qed
\end{defn}

\begin{rem}
An earlier arXiv version of the present paper used slightly weaker definitions than the ones given now in Definition~\ref{defnjk2323}. The reason for changing them was that the author noticed that the proof of Theorem~\ref{thm:bounded_below} does not work with the weaker definitions. Fortunately, all known examples of spaces satisfying the weaker definitions from the previous version on the arXiv also satisfy the stronger ones from above (see Example~\ref{exjk2323} and Lemma~\ref{lem:spaces_polynomial}).
\qed
\end{rem}

\begin{example}\label{exjk2323}
Let $M$ be a closed Riemannian manifold and equip its universal cover $X$ with the pull-back Riemannian metric.

If $\pi_i(M) = 0$ for $2 \le i \le k$, then $X$ is equicontinuously $k$-connected.
\qed
\end{example}

\begin{lem}\label{lem:spaces_polynomial}
Let $X$ be the universal cover of a connected finite simplicial complex $K$ with $\pi_1(K)$ virtually nilpotent and $\pi_i(K) = 0$ for $2 \le i \le k$.\footnote{We can drop the condition on the higher homotopy groups of $K$. In this case the statement is that $X$ will be polynomially $1$-connected.} Equip $K$ with the metric derived from barycentric coordinates and $X$ with the pull-back metric.

Then $X$ is a polynomially $k$-connected simplicial complex of bounded geometry and of polynomial volume growth (we can take $Y \subset X$ to be its set of vertices).
\end{lem}

\begin{proof}
$X$ has bounded geometry since it is a covering space of a finite complex.

Wolf proved in \cite{wolf_growth} that a finitely generated, virtually nilpotent group necessarily has polynomial growth, and the converse statement, i.e., that a finitely generated group of polynomial growth is virtually nilpotent, was proven by Gromov in \cite{gromov_polynomial}.

It remains to show that $X$ is polynomially $k$-connected. This follows from Riley's result \cite[Theorems D \& E]{riley} that virtually nilpotent groups have polynomially bounded higher-dimensional Dehn functions. Note that Tim Riley uses in the definition of Dehn functions the distance in the domain whereas we need to use the distance in the image (see the two paragraphs after the second display on Page 1337 of the cited article of Riley for the explanation of what we mean here).

Higher-order Dehn functions are also discussed in \cite[Section 5]{bridson_polynomial_dehn} and \cite{alonso_wang_pride}.
\end{proof}

A similar notion as equicontinuous $k$-connectedness, namely the notion of uniform $k$-connectedness, already appeared in John Roe's article \cite[Remark on Page 33]{roe_coarse_cohomology} and the idea used in the proof of the next lemma is from \cite[Proof of Theorem 5.28]{roe_lectures_coarse_geometry}.

We have a natural map $\Hulf_\ast(X) \to \Huf_\ast(X)$ which is given by mapping a simplex in $X$ to its tuple of vertices.

\begin{lem}\label{lemui2323}
Let $X$ be equicontinuously $k$-connected.

Then the map $\Hulf_q(X) \to \Huf_q(X)$ is an isomorphism for all $q \le k$.
\end{lem}

\begin{proof}
Let us construct a chain map $\Cuf_\ast(X) \to \Culf_\ast(X)$ for $\ast \le k+1$: given a collection of $k+2$ points in $X$ we can inductively exploit the $k$-connectedness of $X$ to get a continuous map $\Delta^{k+1} \to X$ such that the vertices of this simplex will be the $k+2$ points. Since the $k$-connectedness of $X$ is equicontinuous, we can conclude that applying this procedure to a uniformly finite chain results in a uniformly locally finite chain. This provides the promised map $\Cuf_\ast(X) \to \Culf_\ast(X)$ for $\ast \le k+1$. That it is a chain map, i.e., compatible with the boundary operator, follows from the fact that we constructed it inductively from lower to higher dimensional simplices.

Since it is a chain map it will map cycles to cycles, but it decends to homology classes only for $\ast \le k$, i.e., we get induced maps $\Huf_\ast(X) \to \Hulf_\ast(X)$ for all $\ast \le k$.

Let $\Culf_\ast(X) \to \Cuf_\ast(X)$ be the natural map. It is clear from the above construction that the composition $\Cuf_\ast(X) \to \Culf_\ast(X) \to \Cuf_\ast(X)$ is the identity (note that this composition only exists for $\ast \le k+1$).

The other composition $\Culf_\ast(X) \to \Cuf_\ast(X) \to \Culf_\ast(X)$ is in general not the identity, but it is chain homotopic to it. Since $X$ is equicontinuously $k$-connected, i.e., we can construct maps on the chain level up to degree $k+1$, the composition of the induced maps $\Hulf_\ast(X) \to \Huf_\ast(X) \to \Hulf_\ast(X)$ will be the identity for all $\ast \le k$.
\end{proof}

Note that being polynomially $k$-connected does not immediately imply that the space is also equicontinuously $k$-connected. The problem is that polynomial $k$-connectedness only controls Lipschitz maps, but not continuous one. But this is a problem which does not occur in nice spaces:

\begin{lem}\label{lemjn23cwd}
Let $X$ be a simplicial complex of bounded geometry.

If $X$ is polynomially $k$-connected, then it is equicontinuously $k$-connected.
\end{lem}

\begin{proof}
Let an equicontinuous collection of maps $\{f_j \colon S^q \to X\}_{j \in J}$ be given for $q \le k$. Fix a $\delta < C_n$, where $C_n$ is the constant appearing in the proof of Lemma~\ref{lemh23wefwe} and $n = \dim(X)$. We can approximate the whole family of maps $\{f_j \colon S^q \to X\}_{j \in J}$ up to an error of $\delta$ in each map by a family $\{f_j^\prime \colon S^q \to X\}_{j \in J}$ such that every $f_j^\prime$ is $L$-Lipschitz for a common constant $L$. Now we use polynomial $k$-connectedness to get contractions of these spheres. So it remains to show that we can homotop the family $\{f_j \colon S^q \to X\}_{j \in J}$ into the family $\{f_j^\prime \colon S^q \to X\}_{j \in J}$ equicontinuously. But this can be done via piece-wise linear homotopies of length at most $\delta$. The choice $\delta < C_n$ guarantees us that we have unique geodesics between points which are at most a distance $\delta$ apart (this is the local property that we actually need, i.e., the proof works not only for simplicial complexes but for any metric space with this kind of uniformly local uniqueness of geodesics).
\end{proof}

\begin{rem}
Let us define a space $X$ to be Lipschitz $k$-connected, if for every $q \le k$ it has the property that there is a function $F\colon \IR_{> 0} \to \IR_{> 0}$ such that if $S^q \to X$ is an $L$-Lipschitz map, then there exists a contraction of it which is $F(L)$-Lipschitz, i.e., the definition is the same as of polynomial $k$-connectedness but without the polynomial control. The above proof of Lemma~\ref{lemjn23cwd} shows that being Lipschitz $k$-connected implies being equicontinuously $k$-connected.

But the idea of the proof of Lemma~\ref{lemjn23cwd} also shows the other direction: we look at the family of all $L$-Lipschitz maps $S^q \to X$ for $q \le k$. The space, being equicontinuously $k$-connected, provides us with an equicontinuous collection of contractions. These can be now approximated up to $\delta$ by Lipschitz contractions with a common bound on their Lipschitz number. And finally, we use the local piece-wise linear homotopies to get from the original family of $L$-Lipschitz maps to the one on which the Lipschitz homotopies start. Since piece-wise linear maps have Lipschitz constant $1$, we are done.
\qed
\end{rem}

\subsection{Semi-norms on uniformly finite homology}
\label{secdefnonrms}

Let us now come to semi-norms on uniformly finite homology, which will play a crucial role in the present paper, and then prove the for this paper crucial Theorem~\ref{thm:bounded_below}, and Corollary~\ref{coriso} then proves Theorem B from the introduction.

\begin{defn}\label{defn_pol_chains}
For every $n \in \IN_0$ we define the following norm of a uniformly finite chain $c = \sum a_{\bar y} {\bar y} \in \Cuf_q(Y)$ of a uniformly discrete space $Y$:
\[\|c\|_{\infty,n} := \sup_{\bar y \in Y^{q+1}} |a_{\bar y}| \cdot \length(\bar y)^n,\]
where $\length(\bar y) = \max_{0 \le k,l \le q} d(y_k,y_l)$ for $\bar y = (y_0, \ldots, y_q)$.

We equip $\Cuf_q(Y)$ with the family of norms $(\|\largecdot\|_{\infty,n} + \|\partial \largecdot\|_{\infty,n})_{n \in \IN_0}$, denote its completion to a \Frechet space by $\Cufpol_q(Y)$ and the resulting homology by $\Hufpol_\ast(Y)$.
\qed
\end{defn}

The author used in his paper \cite{engel_BSNC} also a version of polynomially weighted norms, but for homology groups of groups. But note that there they are of a different kind: here we use an $\ell^\infty$-version, whereas in \cite{engel_BSNC} an $\ell^1$-version is used. This remark is to warn the reader not to confuse these two version, because the notion used is very similar.

\begin{thm}\label{thm:bounded_below}
Let $X$ be a polynomially $k$-connected simplicial complex of bounded geometry, let $Y \subset X$ be its set of vertices and let $Y$ have polynomial growth.

Then the chain map\footnote{This map is the composition of the map constructed in the proof of Lemma~\ref{lemui2323} with the map constructed in the proof of Lemma~\ref{lemh23wefwe}. Note that this map is actually not unique, i.e., some choices are involved (which disappear after passing to homology groups).} $\Cuf_q(Y) \to C^\infty_q(X)$ is continuous, where we equip $C^\infty_q(X)$ with the $L^\infty$-norm and $\Cuf_q(Y)$ with the above defined $\|\largecdot\|_{\infty,n}$-norm (for a concrete $n \in \IN$ which will be determined in the proof).
\end{thm}

\begin{proof}
Let $c = \sum a_{\bar y} \bar y \in \Cuf_q(Y)$ and denote by $\Delta_c \in C_q ^\infty(X)$ the simplicial $L^\infty$-chain that we get by composing the map $\Cuf_q(Y) \to \Culf_q(X)$ from the proof of Lemma~\ref{lemui2323} and the map $\Culf_q(X) \to C^\infty_q(X)$ from the proof of Lemma~\ref{lemh23wefwe}. Let $P$ be the polynomial controlling the $k$-connectedness of $X$, and let the growth of $Y$ be estimated by $\vol B_R(y) \le D \cdot R^M$ for all $y \in Y$ (we will usually just write $\vol B_R$ without mentioning $y$). We define
\[\|c\|_{R-[1]} := \sup_{\bar y \in B_{R-[1]}(\Delta_Y^{q+1})} |a_{\bar y}|,\]
where $B_{R-[1]}(\Delta_Y^{q+1}) = B_{R}(\Delta_Y^{q+1}) - \interior\big(B_{R-1}(\Delta_Y^{q+1})\big)$ for $\Delta_Y^{q+1} \subset Y^{q+1}$ the diagonal, where $B_R(\largecdot)$ denotes the closed ball of radius $R$ and $\interior(\largecdot)$ denotes the interior (i.e., $\interior(B_{R-1}(\largecdot))$ is the open ball of radius $R-1$), and then we have the estimate
\begin{equation}
\label{eqndsj3223}
|\Delta_{\bar y_0}| \le \sum_{R \in \IN} \|c\|_{R-[1]} \cdot \vol B_{P^{\circ q}(R)} \cdot \vol B_{R-[1]} \cdot (\vol B_R)^{q-1} \cdot S(R) \cdot N(R)
\end{equation}
on the coefficient of the simplex $\Delta_{\bar y_0}$ of $\Delta_c$. Here $S(R) := \lceil \log_{\sfrac{n}{n+1}} (\sfrac{C_q}{P^{\circ q}(R)}) \rceil$, where $C_q$ is the constant appearing in the proof of Lemma~\ref{lemh23wefwe}, and $N(R) := {(q+1)!}^{S(R)}$.

We arrive at this estimate in the following way: firstly, we have to estimate which of the $\bar y$ may contribute to $\Delta_{\bar y_0}$. Our construction of $\Delta_c$ is such that $\Delta_{\bar y_0}$ must lie in the $P^{\circ q}(R)$-ball of $\bar y$ for this to happen. Now we go through all $\bar y$ with maximal edge length between $R-1$ and $R$ which may constribute to $\Delta_{\bar y_0}$ by picking a point in the $P^{\circ q}(R)$-ball of $\bar y_0$ (which will be one of the vertices of a potentially contributing $\bar y$), picking a second point in the $(R-[1])$-ball of the first point (which will be our second vertex of $\bar y$ and together with the first one they will produce the maximal edge length of $\bar y$), and then picking the missing $q-1$ vertices of $\bar y$ (which must lie in the $R$-ball around the first picked point since $\bar y$ must have edge lengths smaller than $R$). This gives the first four factors in the above estimate.

Secondly, we have to know how often a single coarse simplex $\bar y$ from $c$ can contribute to $\Delta_{\bar y_0}$ after making $\bar y$ a topological simplex. This gives the last two factors in \eqref{eqndsj3223}: $S(R)$ is the number of times we have to hit the resulting topological simplex with barycentric subdivision in order to make its diameter less than $C_q$, and $N(R)$ is the number of small simplices we get by hitting it that many times with the barycentric subdivision.

By assumption $P(R)$ is polynomial and therefore $P^{\circ q}(R)$ is polynomial, and $\vol B_R$ is also polynomially bounded by assumption. Furthermore, $S(R)$ is clearly polynomially bounded, and $N(R)$ is polynomially bounded, too. So all factors but the first one of~\eqref{eqndsj3223} are polynomially bounded. Hence we get the estimate
\[\|\Delta_c\|_\infty \le \sum_{R \in \IN} \|c\|_{R-[1]} \cdot L(R),\]
where $L(R)$ is a polynomial.

We have for all $n \in \IN_0$
\begin{align*}
\|c\|_{\infty,n} & = \sup_{\bar y \in Y^{q+1}} |a_{\bar y}| \cdot \length(\bar y)^n\\
& \ge \sup_{R \in \IN} \sup_{\bar y \in B_{R-[1]}(\Delta_Y^{q+1})} |a_{\bar y}| \cdot (R-1)^n\\
& = \sup_{R \in \IN} \|c\|_{R-[1]} \cdot (R-1)^n
\end{align*}
which gives the estimate $\|c\|_{R-[1]} \le 2^n \frac{\|c\|_{\infty,n}}{R^n}$ for all $R \ge 2$ (this restriction is because we used $R-1 \ge R/2$ for $R \ge 2$). Setting $n_0 := \deg(L) + 2$ and combining everything we get
\begin{align}
\|\Delta_c\|_\infty & \le 2^{n_0} \|c\|_{\infty, n_0} \sum_{R \in \IN_{\ge 2}} R^{-2} + \|c\|_{1-[1]} L(1)\notag \\
& \le (2^{n_0} \pi^2/6 + L(1)) \|c\|_{\infty,n_0}.\label{eq:crucial_estimate}
\end{align}

Equipping $\Cuf_q(Y)$ with the norm $\|\largecdot\|_{\infty,n} + \|\partial\largecdot\|_{\infty,n}$ for $n := \deg(L) + 2$ and $C_q^\infty(X)$ with the norm $\|\largecdot\|_\infty + \|\partial\largecdot\|_\infty$, we get from the Estimate~\ref{eq:crucial_estimate} and the fact that our map $\Cuf_q(Y) \to C_q^\infty(X)$ is a chain map, that it is bounded.
\end{proof}

\begin{cor}\label{coriso}
Let $X$ be a polynomially $k$-connected simplicial complex of bounded geometry, let $Y \subset X$ be its set of vertices and let $Y$ have polynomial growth.

Then for all $q \le k$ the map $H^\infty_q(X) \to \Hufpol_q(Y)$ is a continuous isomorphism with continuous inverse, if we equip $H^\infty_q(X)$ with the $L^\infty$-seminorm and $\Hufpol_q(Y)$ with the above defined $\|\largecdot\|_{\infty,n}$-seminorm (for a concrete $n \in \IN$ which was determined in the proof of Theorem~\ref{thm:bounded_below}).
\end{cor}

\begin{proof}
The natural map $C^\infty_q(X) \to \Cufpol_q(Y)$ is continuous by definition of the norm on the latter space. The map $\Cuf_q(Y) \to C_q^\infty(X)$ is continuous by the above Theorem~\ref{thm:bounded_below} and therefore it extends to the completion $\Cufpol_q(Y)$ of its domain (note that $C^\infty_q(X)$ is a complete space, i.e., a Banach space).

We have to look at the compositions. The composition $C^\infty_q(X) \to \Cuf_q(Y) \to C^\infty_q(X)$ is not necessarily the identity: a simplex from a chain in $C^\infty_q(X)$ will be first homotoped to a possibly larger continuous simplex, then we apply several times barycentric subdivision to crush it again into smaller simplices, and lastly we move the vertices of the resulting topological simplices to the original vertices $Y$. Now note that the size of the topological simplex we get is uniformly bounded (since we start always with simplices of edge length $1$), and therefore the number of times we have to apply barycentric subdivision is uniformly bounded from above (it depends only on the dimension of the simplex in this procedure). From this we can derive that the composition  we are currently looking at, i.e., $C^\infty_q(X) \to \Cuf_q(Y) \to C^\infty_q(X)$, is bounded from above in norm. But this also shows that the chain homotopy from this composition to the identity chain map is bounded from above. These arguments also apply to the composition $C^\infty_q(X) \to \Cufpol_q(Y) \to C^\infty_q(X)$.

For the other composition $\Cuf_q(Y) \to C^\infty_q(X) \to \Cuf_q(Y)$ we can argue similarly, i.e., that the chain homotopy to the identity chain map is continuous, using the ideas from the proof of the above Theorem~\ref{thm:bounded_below}. So the chain homotopy extends to $\Cufpol_q(Y)$, which finishes this proof.
\end{proof}

\subsection{Rough character map}
\label{sec:rough_character}

Let us first quickly recall the definitions of Connes' cyclic homology theory and of the Chern--Connes character maps $K_\ast^\alg(A) \to \HC_\ast(A)$ for $A$ an algebra over $\IC$. Introductions to this topic are, e.g., Cuntz \cite{cuntz_encyc_math_sci} and Loday \cite{loday_cyclic_hom}.

\begin{defn}
The \emph{cyclic homology} $\HC_\ast(A)$ is defined as the homology of the complex
\[\ldots \longrightarrow \Cl_n(A) \stackrel{b}\longrightarrow \Cl_{n-1}(A) \stackrel{b}\longrightarrow \ldots \stackrel{b}\longrightarrow \Cl_0(A) \longrightarrow 0,\]
where $\Cl_n(A) = A^{\otimes (n+1)} / (1-\lambda) A^{\otimes (n+1)}$, $\lambda$ is the operator
\[\lambda(a_0 \otimes \cdots \otimes a_n) = (-1)^n a_n \otimes a_0 \otimes \cdots \otimes a_{n-1}\]
and $b$ is the Hochschild operator
\[b(a_0 \otimes \cdots \otimes a_n) = \sum_{j=0}^{n-1} (-1)^j a_0 \otimes \cdots \otimes a_j a_{j+1} \otimes \cdots \otimes a_n + (-1)^n a_n a_0 \otimes a_1 \cdots \otimes a_{n-1}.\]
Here we use the algebraic tensor product over $\IC$.
\qed
\end{defn}

\begin{defn}
The Chern--Connes characters $\ch_\ast \colon K^\alg_\ast(A) \to \HC_\ast (A)$ are given by the formulas
\[\ch_{2n}([e]) = \frac{(2n)!}{n!}(2\pi i)^n \trace e^{\otimes (2n+1)} \in \HC_{2n} (A)\]
for $[e] \in K^\alg_0(A)$, and for $[u] \in K^\alg_1(A)$ by
\[\ch_{1,2n+1}([u]) = \frac{(2n+1)!}{(n+1)!} (2 \pi i)^{n+1}\trace \big( (u^{-1}-1) \otimes (u-1) \big)^{\otimes (n+1)} \in \HC_{2n+1}(A).\]

Note that we have incorporated the constants appearing in the local index theorem of Connes and Moscovici into the defining formulas of the Chern--Connes characters so that at the end we do not get any constants in the statement of Theorem~\ref{thmoi2343rew}, resp., so that we get the commutativity of Diagram \eqref{eq:main_diag} not only up to constants. Compare this to J.~Roe's theorem \cite[Sections~4.5 and 4.6]{roe_coarse_cohomology}, where this constants do appear.
\qed
\end{defn}

\begin{rem}
If $A$ is a complete, locally convex algebra with jointly continuous multiplication, then we also have maps $K_\ast(A) \to \HCocont_{2n+\ast}(A)$, where for the definition of the continuous cyclic homology we have to use the completed projective tensor product in the definition of the chain groups $C_n^\lambda(A)$. See \cite{cuntz_thom} for the development of algebraic $K$-theory for such topological algebras.
\qed
\end{rem}

We are now going to define the \emph{algebraic rough character map $\HC_\ast(\uRoe(E)) \to \Huf_\ast(Y)$} for $E \to M$ a vector bundle of bounded geometry over the manifold $M$ of bounded geometry and $Y \subset M$ will be a discretization\footnote{see Definition \ref{defn:coarsely_bounded_geometry}}.

\begin{defn}\label{defnui3ed}
Let $A_0 \otimes \cdots \otimes A_n \in \uRoe(E)^{\otimes (n+1)}$. We define $\chi(A_0 \otimes \cdots \otimes A_n) \in C_n^\uf(Y)$ for $Y \subset M$ a discretization by
\[\chi(A_0 \otimes \cdots \otimes A_n)(y_0, \ldots, y_n) := \frac{1}{(n+1)!} \sum_{\sigma \in \mathfrak{S}_{n+1}} (-1)^\sigma \trace \big( A_0 y_{\sigma(0)} \cdots A_n y_{\sigma(n)} \big),\]
where $y_i$ are the projection operators on $L^2(E)$ given by characteristic functions of $V_{y_i} \subset M$, where $\{V_y\}_{y \in Y}$ is as follows:

$M$ is a manifold of bounded geometry, so it admits a compatible\footnote{\label{footnote23wesqw}Attie showed in \cite[Theorem 2.1]{attie}) that any manifold of bounded geometry $M$ may be triangulated as a simplicial complex $X$ of bounded geometry and such that the metric on $X$ derived from barycentric coordinates is bi-Lipschitz equivalent to the one on $M$.\label{footnotejkds32}} triangulation $X$ as a simplicial complex of bounded geometry. If $Y \subset X$ is the set of vertices, $Y \subset M$ is a discretization. Then we define
\begin{equation}
\label{eqjkb23wfe}
V_y := \{x \in M\colon d(x,y) \le d(x,y^\prime)\text{ for all }y^\prime \in Y\}.
\end{equation}
We can use either the distance in $M$ or the distance coming from barycentric coordinates in the above formula. Later, i.e., after passing to homology classes, this choice will not matter, since these metrics are bi-Lipschitz equivalent to each other.
\qed
\end{defn}

The operators $A_i$ in the above definition are smoothing operators and so have integral kernels $k_i \in C_b^\infty(E^\ast \boxtimes E)$ by Proposition \ref{prop:smoothing_op_kernel}. Using that for a trace class operator $T$ given by a continuous integral kernel $k(x,y)$ defined over a compact domain $N$ we have $\trace T = \int_N k(x,x) dx$, we get the formula
\begin{equation}
\label{eq:trace_integral_kernels}
\trace \big( A_0 y_{\sigma(0)} \cdots A_n y_{\sigma(n)} \big) = \int_{V_{y_{\sigma(n)}}} \hspace*{-1.9em} dx_n \cdots \int_{V_{y_{\sigma(0)}}} \hspace*{-1.9em} dx_0 \trace \big( k_0(x_n,x_0) k_1(x_0,x_1) \cdots k_n(x_{n-1},x_n) \big).
\end{equation}

\begin{lem}
The map $\chi$ is well-defined, i.e., it indeed maps into $C_\ast^\uf(Y)$.
\end{lem}

\begin{proof}
Recall Definition~\ref{defn98342d} of uniformly finite homology. Point~\ref{23wefd} is satisfied by the chain $\chi(A_0 \otimes \cdots \otimes A_n)$ because the operators $A_0, \ldots, A_n$ all have finite propagation. Point~\ref{ub234ewr} is satisfied because $Y \subset M$ is a discretization.

So it remains to show Point~\ref{iu23wed}, i.e., the uniform boundedness of the coefficients of the chain $\chi(A_0 \otimes \cdots \otimes A_n)$. Applying the Cauchy--Schwarz inequality to \eqref{eq:trace_integral_kernels} we get
\begin{align*}
& \big| \trace \big( A_0 y_{\sigma(0)} \cdots A_n y_{\sigma(n)} \big) \big|\\
& \le \dim(E) \cdot \int_{V_{y_{\sigma(n)}}} \hspace*{-1.9em} dx_n \int_{V_{y_{\sigma(0)}}} \hspace*{-1.9em} dx_0 \ \|k_0(x_n,x_0)\|^2 \cdot \underbrace{\Big\| \int_{V_{y_{\sigma(n-1)}}} \hspace*{-2.7em} dx_{n-1} \cdots \int_{V_{y_{\sigma(1)}}} \hspace*{-1.9em} dx_1 \ k_1(x_0,x_1) \cdots k_n(x_{n-1},x_n)\Big\|^2}_{\le \prod_{1 \le i \le n} \|k_i\|_\infty^2 \cdot \vol\left(V_{y_{\sigma(n-1)}} \times \cdots \times V_{y_{\sigma(1)}}\right)^2}.
\end{align*}
Using \eqref{eqkj200} we can get the following estimate for the first double integral, where $C_0 < \infty$ is the constant from \eqref{eqkj200} and $k > \dim(M) / 2$ is fixed:
\begin{align*}
\int_{V_{y_{\sigma(n)}}} \hspace*{-1.9em} dx_n \int_{V_{y_{\sigma(0)}}} \hspace*{-1.9em} dx_0 \ \|k_0(x_n,x_0)\|^2 & \le \int_{V_{y_{\sigma(0)}}} \hspace*{-1.9em} dx_0 \cdot C_0 \cdot \mu_{A_0}^{-k,0}\big(\dist(V_{y_{\sigma(n)}},V_{y_{\sigma(0)}})\big)^2\\
& \le \vol\big(V_{y_{\sigma(0)}}\big) \cdot C_0 \cdot \mu_{A_0}^{-k,0}\big(\dist(V_{y_{\sigma(n)}},V_{y_{\sigma(0)}})\big)^2.
\end{align*}
Since $M$ has bounded geometry, and due to our choice of discretization $Y \subset M$ and corresponding choice of subsets $V_y$ for $y \in Y$, we know that the volumes of the subsets $V_y$ are uniformly bounded from above. So putting all the estimates of the volumes together into a single constant $D < \infty$, we get
\begin{equation}
\label{eqerg453}
\big| \trace \big( A_0 y_{\sigma(0)} \cdots A_n y_{\sigma(n)} \big) \big| \le D \cdot C_0 \cdot \mu_{A_0}^{-k,0}\big(\dist(V_{y_{\sigma(n)}},V_{y_{\sigma(0)}})\big)^2 \cdot \prod_{1 \le i \le n} \|k_i\|_\infty^2.
\end{equation}
Note that our choice to single out the operator $A_0$ in this estimate was arbitrary. So in fact we can get the following estimate:
\[\big| \trace \big( A_0 y_{\sigma(0)} \cdots A_n y_{\sigma(n)} \big) \big| \le D \cdot \min_{0 \le j \le n}\Big\{C_j \cdot \mu_{A_j}^{-k,0}\big(\dist(V_{y_{\sigma(j-1)}},V_{y_{\sigma(j)}})\big)^2 \cdot \prod_{i \not= j} \|k_i\|_\infty^2\Big\}.\]
This finishes this proof since the dominating function $\mu_B(R)$ of an operator $B$ can be estimated from above by the operator norm of $B$ for all $R \ge 0$.
\end{proof}

\begin{lem}
The map $\chi$ descends to a chain map on $C_\bullet^\lambda(\uRoe(E))$.
\end{lem}

\begin{proof}
The total anti-symmetrization in the definition of $\chi$ ensures that it is well-defined on $C_n^\lambda(\IC_{-\infty}^\ast(E))$, i.e., that it satisfies
\[\chi(A_0 \otimes \cdots \otimes A_n) = \chi( (-1)^n A_n \otimes A_0 \otimes \cdots \otimes A_{n-1}).\]

To show that $\chi$ is compatible with the boundary operators, we first do the following computation using the substitution $(y_0, \ldots, y_{j-1}, x, y_{j}, \ldots, y_{n-1}) = (z_0, \ldots, z_n)$:
\begin{align*}
(n & + 1)! \cdot \sum_{x \in Y} \ \chi(A_0 \otimes \cdots \otimes A_n)(y_0, \ldots, y_{j-1}, x, y_{j}, \ldots, y_{n-1})\\
& = \sum_{z_j \in Y} \sum_{\sigma \in \mathfrak{S}_{n+1}} (-1)^\sigma \trace \big( A_0 z_{\sigma(0)} \cdots A_n z_{\sigma(n)} \big)\\
& = \sum_{0 \le i \le n} \sum_{\sigma \in \mathfrak{S}_{n+1}\atop \sigma(i)=j} (-1)^{\sigma} \sum_{z_j \in Y} \trace \big( A_{0} z_{\sigma(0)} \cdots A_{i} z_j A_{i+1} z_{\sigma(i+1)} \cdots A_{n} z_{\sigma(n)} \big)\\
& = \sum_{0 \le i \le n} \sum_{\sigma \in \mathfrak{S}_{n+1}\atop \sigma(i)=j} (-1)^{\sigma} \trace \big( A_{0} z_{\sigma(0)} \cdots A_{i} A_{i+1} z_{\sigma(i+1)} \cdots A_{n} z_{\sigma(n)} \big)\\
& = \sum_{0 \le i \le n} \sum_{\theta \in \mathfrak{S}_{n}} (-1)^{i+j} (-1)^{\theta} \trace \big( A_{0} y_{\theta(0)} \cdots A_{i} A_{i+1} y_{\theta(i)} \cdots A_{n} y_{\theta(n-1)} \big).
\end{align*}
So we have
\begin{align*}
\partial \big( \chi & (A_0 \otimes \cdots \otimes A_n) \big) (y_0, \ldots, y_{n-1})\\
& = \sum_{0 \le j \le n} \sum_{x \in Y} (-1)^j \chi(A_0 \otimes \cdots \otimes A_n)(y_0, \ldots, y_{j-1}, x, y_{j}, \ldots, y_{n-1})\\
& = \sum_{0 \le i \le n} \underbrace{\frac{1}{(n+1)!} \sum_{0 \le j \le n}}_{=\frac{1}{n!}} \sum_{\theta \in \mathfrak{S}_{n}} (-1)^{i} (-1)^{\theta} \trace \big( A_{0} y_{\theta(0)} \cdots A_{i} A_{i+1} y_{\theta(i)} \cdots A_{n} y_{\theta(n-1)} \big)\\
& = \sum_{0 \le i \le n} (-1)^i \chi(A_0 \otimes \cdots \otimes A_i A_{i+1} \otimes \cdots \otimes A_n)(y_0, \ldots, y_{n-1})\\
& = \chi \big( b (A_0 \otimes \cdots \otimes A_n) \big) (y_0, \ldots, y_{n-1})
\end{align*}
showing that $\chi$ is a chain map.
\end{proof}

\begin{lem}\label{lem982u3iwed}
The map $\chi\colon \uRoe(E)^{\otimes (n+1)} \to \Cuf_n(Y)$ is continuous if we equip $\uRoe(E)$ with the family of semi-norms from Remark \ref{rem:defn_seminorms_uRoe} and $\Cuf_n(Y)$ with the family of semi-norms from Definition \ref{defn_pol_chains}.
\end{lem}

\begin{proof}
This follows from \eqref{eqerg453}: the semi-norms on $\Cuf_n(Y)$ put polynomial weights onto the left-hand side of \eqref{eqerg453} depending on the distance between the points $y_0, \ldots, y_n$. But the semi-norms on $\uRoe(E)$ give us that the quantity $\mu_{A_0}^{-k,0}(\largecdot)$ on the right-hand side of the estimate \eqref{eqerg453} decays polynomially fast. Finally, in order to bound the quantities $\|\largecdot\|_\infty$ by certain operator norms occuring in the family of norms used on $\uRoe(E)$ we use Proposition~\ref{prop:smoothing_op_kernel}.
\end{proof}

By Lemma~\ref{lem982u3iwed} we get that $\chi$ continuously extends to a map $\Cpol(E)^{\hatotimes(n+1)} \to \Cufpol_n(Y)$ and therefore induces a map on homology $\chi_\ast \colon \HCocont_\ast(\Cpol(E)) \to \Hufpol_\ast(Y)$.

Let us combine the results of this subsection into the next lemma.

\begin{lem}\label{lem243tfds}
We have the following diagram:
\[\xymatrix{K^{\mathrm{alg}}_\ast(\IC_{-\infty}^\ast(E)) \ar[d]_{\ch_\ast} \ar[r] & K_\ast(\Cpol(E)) \ar[d]\\
\PHC_\ast(\IC_{-\infty}^\ast(E)) \ar[d]_{\chi_\ast} \ar[r] & \PHCocont_\ast(\Cpol(E)) \ar[d]\\
\Huf_\ast(Y) \ar[r] & \Hufpol_\ast(Y)}\]
where $Y \subset M$ is a discretization of the manifold $M$ of bounded geometry and the right vertical maps are the continuous extension of the left vertical maps.

Note that we have two cases for $\ast$: either it is $\ast = 0$ on $K$-theory and $\PHC$-theory and $\ast = \text{even}$ on homology, or $\ast = 1$ on $K$-theory and $\PHC$-theory and $\ast = \text{odd}$ on homology.
\end{lem}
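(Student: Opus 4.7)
The plan is to factor each vertical arrow through cyclic homology and then check commutativity by naturality. For the left vertical I take the composition $\Kalg_\ast(\uRoe(E)) \xrightarrow{\ch} \HC_{2n+\ast}(\uRoe(E)) \xrightarrow{\chi} \Huf_\ast(Y)$, where $\ch = \ch_{\ast,2n+\ast}$ is the algebraic Chern--Connes character recalled above and $\chi$ is the chain-level map constructed in the preceding lemma. For the right vertical I use that $\Cpol(E)$ is a \Frechet $^\ast$-algebra with jointly continuous multiplication (Proposition \ref{prop:Cpol_Frechet_smooth}), so Cuntz's formalism produces a continuous Chern--Connes character $K_\ast(\Cpol(E)) \to \HCocont_{2n+\ast}(\Cpol(E))$ defined by exactly the same universal formulas. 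I then compose it with the continuous extension $\chi \colon \HCocont_\ast(\Cpol(E)) \to \Hufpol_\ast(Y)$, whose existence and continuity were established in the paragraph just before the lemma via Equation \eqref{eq:trace_integral_kernels} and Lemma \ref{lem:decay_integral_kernel}.

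Commutativity of the required square is then obtained by pasting the two subsquares
\[\xymatrix{\Kalg_\ast(\uRoe(E)) \ar[r] \ar[d]_{\ch} & K_\ast(\Cpol(E)) \ar[d]^{\ch^{\mathrm{cont}}} \\ \HC_{2n+\ast}(\uRoe(E)) \ar[r] \ar[d]_\chi & \HCocont_{2n+\ast}(\Cpol(E)) \ar[d]^\chi \\ \Huf_\ast(Y) \ar[r] & \Hufpol_\ast(Y).}\]
The upper square commutes because an idempotent $e \in \uRoe(E)$ (or an invertible $u \in \uRoe(E)^+$) represents its own image in $\Cpol(E)$, and both Chern--Connes maps send it to the cyclic class of $\tfrac{(2n)!}{n!}(2\pi i)^n\,\trace\, e^{\otimes(2n+1)}$ (resp.\ the odd analogue), which maps to itself under the inclusion $\uRoe(E)^{\otimes(n+1)} \hookrightarrow \Cpol(E)^{\hatotimes(n+1)}$. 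The lower square commutes by the very construction of the continuous extension of $\chi$: on algebraic tensors it is literally $\chi$, and the bottom horizontal $\Huf_\ast(Y) \to \Hufpol_\ast(Y)$ is induced by the obvious inclusion of chain complexes.

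The only non-formal point I expect to need is the verification that the continuous Chern--Connes character for $\Cpol(E)$ is well defined at all, i.e.\ that the given formulas descend through the completed projective tensor powers to continuous cyclic classes and are compatible with the equivalence relation in topological $K$-theory. This is the standard construction for \Frechet algebras with jointly continuous multiplication as explained in the remark preceding the statement and in \cite{cuntz_thom}, and it requires no estimates beyond the joint continuity of multiplication that we already have from Proposition \ref{prop:Cpol_Frechet_smooth}. Once this reference is invoked, pasting the two squares above yields the lemma.
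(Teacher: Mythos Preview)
Your proposal is correct and follows exactly the approach the paper intends; in fact the paper gives no separate proof of this lemma at all, stating it as a summary of the preceding constructions (``Let us combine the results of this subsection into the next lemma''). Your write-up simply makes explicit the factorization through cyclic homology and the naturality check that the paper leaves implicit.
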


\section{Index theory}
\label{sec:continuous_pairing}

In this section we will revisit coarse and rough cohomology and then use the results of the previous section to deduce that these theories pair continuously with the $K$-theory of the uniform Roe algebra in the case the manifold is polynomially contractible and of polynomial volume growth. In Section~\ref{secui23ref} we will discuss some applications, where we will also construct some interesting coarse and rough cohomology classes to pair with.

\subsection{Coarse pairings}

Let us first recall coarse cohomology and how it pairs with uniformly finite homology.

\begin{defn}[{\cite[Section 2.2]{roe_coarse_cohomology}}]\label{defn:rough_cohomology}
Let $Y$ be a uniformly discrete metric space.

We define $\CX^q(Y)$ to be the space of all functions $\phi\colon Y^{q+1} \to \IC$ satisfying the condition that for all $R > 0$
\[\supp(\phi) \cap B_R(\Delta)\]
is bounded, where $\Delta \subset Y^{q+1}$ is the diagonal.

The Alexander--Spanier coboundary operator is defined by
\begin{equation}
\label{eqjkwe99}
(\partial \phi)(y_0, \ldots, y_{q+1}) := \sum_{i=0}^{q+1} (-1)^i \phi(y_0, \ldots, \hat{y_i}, \ldots, y_{q+1})
\end{equation}
and the resulting homology is called the \emph{coarse cohomology} $\HX^\ast(Y)$ of $Y$.
\qed
\end{defn}

\begin{defn}\label{defn_pairing_uf_coarse}
Given any $\phi \in \CX^q(Y)$ and $\sigma \in \Cuf_q(Y)$ we define
\[\langle \phi, \sigma\rangle := \sum_{\bar y \in Y^{q+1}} \phi(\bar y) \cdot \sigma(\bar y).\]
This decends to a bilinear pairing $\langle \cdot, \cdot \rangle \colon \HX^q(Y) \times \Huf_q(Y) \to \IC$.
\qed
\end{defn}

Let us recall the construction of the character map $c\colon \HX^\ast(Y) \to \HcdR^\ast(M)$ from \cite[Section 2.2]{roe_coarse_cohomology} for $Y \subset M$ a discretization\footnote{recall Definition \ref{defn:coarsely_bounded_geometry}} of $M$, where $\HcdR^\ast(M)$ is compactly supported de Rham cohomology. Choose a partition of unity $\{g_{y}\}_{y \in Y}$ subordinate to the open cover $\{B_1(V_y)\}_{y \in Y}$ of $M$, where $V_y$ is as in \eqref{eqjkb23wfe} and $B_1(\largecdot)$ denotes the open ball of radius $1$ around it, and let $\phi\colon Y^{q+1} \to \IC$ be a coarse $q$-cochain. Now define a compactly supported $q$-form $c(\phi)$ by
\begin{equation}
\label{eqjew999}
c(\phi)(x) := \sum_{y_0, \ldots, y_q \in Y} \phi(y_0, \ldots, y_q) \cdot g_{y_0}(x) \cdot dg_{y_1}(x) \wedge \ldots \wedge dg_{y_q}(x).
\end{equation}
The construction descends to classes and becomes independent of the partition of unity.

So we have a pairing with $\Huf_\ast(Y)$, which gives us a pairing with $K_\ast^\alg(\uRoe(M))$ by using the character map $\chi_\ast \circ \ch_\ast\colon K_\ast^\alg(\uRoe(M)) \to \Huf_\ast(Y)$. The question is now if this pairing is continuous so that it extends to a pairing with $K_\ast(\Cpol(M)) \cong K_\ast(C_u^\ast(M))$.

\begin{thm}\label{thm:always_cont_pairing}
Let $X$ be a polynomially $k$-connected simplicial complex of bounded geometry, let $Y \subset X$ be the set of vertices of $X$, and let $Y$ have polynomial growth.

Then the pairing of $\HX^q(Y)$ with $\Huf_q(Y)$ is continuous for all $q \le k$ and therefore extends to a pairing with $\Hufpol_q(Y)$.
\end{thm}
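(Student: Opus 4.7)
The plan is to reduce the Alexander--Spanier pairing on $Y^{q+1}$ to a pairing at the simplicial level of $X$, where the cochain becomes compactly supported and the chain is bounded by Theorem~\ref{thm:bounded_below}. Given $\phi \in \CX^q(Y)$, I form the simplicial $q$-cochain $\tilde\phi := \iota^\ast\phi$ on $X$ by evaluating $\phi$ on the (ordered) vertex tuples of simplices. Since $X$ has bounded geometry, every simplex vertex tuple lies in $B_{R_0}(\Delta) \subset Y^{q+1}$ for a fixed $R_0$, and together with the support condition on $\phi$ this forces $\tilde\phi$ to be supported on only finitely many simplices, so $\|\tilde\phi\|_{\ell^1} < \infty$.

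\textbf{Key identity via a chain homotopy.} For a cocycle $\phi$ and a finite-propagation cycle $\sigma$ I want to prove
\[\langle \phi, \sigma\rangle \;=\; \langle \tilde\phi, \Pi(\sigma)\rangle \;=\; \langle \phi, \iota\Pi(\sigma)\rangle,\]
where $\Pi\colon \Cuf_q(Y) \to C_q^\infty(X)$ is the retraction $\bar y \mapsto \Delta_{\bar y}$ from the proof of Theorem~\ref{thm:bounded_below}. Since $\phi$ is a cocycle this reduces to exhibiting $\sigma - \iota\Pi(\sigma)$ as a uniformly finite boundary. I would produce this boundary through an inductive chain homotopy $h\colon \Cuf_i(Y) \to \Cuf_{i+1}(Y)$ for $i \le q$ satisfying $\partial h + h\partial = \id - \iota\Pi$: having defined $h$ in lower degrees, for each $\bar y \in Y^{i+1}$ the element $\bar y - \iota\Delta_{\bar y} - h(\partial\bar y)$ is an $i$-cycle (by the chain-map property of $\Delta_{(\cdot)}$ together with the inductive hypothesis for $h$), supported within a ball around $\bar y$ whose radius depends only on $\length\bar y$. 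Uniform $i$-connectedness of $X$ -- implied by polynomial $k$-connectedness, since $i \le q \le k$ -- then fills this cycle with an $(i+1)$-chain of uniformly controlled support, which becomes $h(\bar y)$.

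\textbf{Estimate and conclusion.} Estimate \eqref{eq:crucial_estimate} from the proof of Theorem~\ref{thm:bounded_below} gives $\|\Pi(\sigma)\|_\infty \le C\cdot\|\sigma\|_{\infty,n}$ with $n = Mq(N+1)+2$ as determined there. Combined with the identity above,
\[|\langle\phi,\sigma\rangle| \;\le\; \|\tilde\phi\|_{\ell^1} \cdot \|\Pi(\sigma)\|_\infty \;\le\; C_\phi\cdot\|\sigma\|_{\infty,n},\]
which is a continuous seminorm of the Fréchet topology on $\Cufpol_q(Y)$. Thus the pairing extends continuously to $\Cufpol_q(Y)$, and since the original pairing already vanishes on boundaries it descends to $\HX^q(Y) \times \Hufpol_q(Y) \to \IC$.

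\textbf{Main obstacle.} The technical heart of the argument is the inductive construction of $h$: for the family $\{h(\bar y)\}_{\bar y \in Y^{i+1}}$ to assemble into a genuine uniformly finite chain one must choose the fillings of the $i$-cycles $\bar y - \iota\Delta_{\bar y} - h(\partial\bar y)$ uniformly in $\bar y$, with uniform bounds on both coefficients and propagation. Existence of a filling is automatic from $k$-connectedness, but the uniform choice requires the same uniform-boundedness bookkeeping that went into constructing the $\Delta_{\bar y}$ themselves.
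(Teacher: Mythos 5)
Your argument is essentially the paper's proof: the paper likewise rewrites $\langle\phi,\sigma\rangle$ as a pairing of a compactly supported cochain on $X$ (via the character map $c$, which plays the role of your $\tilde\phi$) with the simplicial chain $\Delta_\sigma = \Pi(\sigma)$ from the proof of Theorem~\ref{thm:bounded_below}, and then applies estimate \eqref{eq:crucial_estimate} to get $|\langle\phi,\sigma\rangle| \le C_\phi \cdot \|\sigma\|_{\infty,n}$. Your chain-homotopy justification of the identity $\langle\phi,\sigma\rangle = \langle\tilde\phi,\Pi(\sigma)\rangle$ is exactly the step the paper asserts without elaboration (it is the standard uniform-filling argument underlying Proposition~\ref{prop:iso_Huf} and the construction of the $\Delta_{\bar y}$), so your proposal is correct and follows the same route.
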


\begin{proof}
Let us first assume that $X$ is a triangulation of a manifold $M$ of bounded geometry (in the sense of Attie, see Footnote~\ref{footnotejkds32} on Page~\pageref{footnotejkds32}).

Let $\phi \in \CX^q(Y)$ and $\sigma \in \Cuf_q(Y)$ and we need an estimate for $\langle \phi, \sigma\rangle$. Now note that the latter is the same as $\langle c(\phi), \Delta_\sigma\rangle$ for closed cochains $\phi$, where $c\colon \CX^q(Y) \to C^q_{c,\mathrm{dR}}(M)$ is the character map \eqref{eqjew999} and $\Delta_\sigma \in C_q^\infty(X)$ is the simplicial chain that we constructed in the proof of Theorem \ref{thm:bounded_below}. Now we have
\[|\langle c(\phi),\Delta_\sigma\rangle| \le C \cdot \|\Delta_\sigma\|_\infty \le C^\prime \cdot \|\sigma\|_{\infty,n},\]
where $C$ is a constant which only depends on $\phi$ (basically its corresponding operator norm), the second estimate is \eqref{eq:crucial_estimate} from the proof of Theorem \ref{thm:bounded_below}, and $n \in \IN$ is fixed (i.e., depends only on $X$ and $q$).

If $X$ is not a triangulation of a manifold, we have to use a character map $c(\largecdot)$ which maps into compactly supported cohomology of $X$ (instead of the one from above that maps into compactly supported de Rham cohomology of the manifold). Such a character map was defined by Roe \cite[Paragraph 2.11 on Page 10]{roe_coarse_cohomology}.
\end{proof}

Let us recall the index theorem associated to the above coarse pairings. The case of Dirac operators is due to J.~Roe \cite{roe_coarse_cohomology} and the extension to pseudodifferential operators is due to the author \cite{engel_indices_UPDO}. But originally this kind of index theorem goes back to Connes--Moscovici \cite{connes_moscovici}.

\begin{thm}\label{thmoi2343rew}
Let $M$ be a polynomially $k$-connected Riemannian manifold of bounded geometry and of polynomial volume growth, let $P$ be a symmetric and elliptic uniform pseudodifferential operator, and let $\phi \in \HX^q(Y)$ be a coarse cohomology class for $q \le k$.

Then we have
\[\langle \phi, (\chi_\ast \circ \ch_\ast)(\mu_\ast^u [P])\rangle = \int_M c(\phi) \wedge \ind_t (P),\]
where $\mu_\ast^u [P] \in K_\ast(C_u^\ast M)$ is the rough index class of $P$ and $\ind_t (P) \in \HbdR^\ast(M)$ denotes the topological index class of $P$.
\end{thm}

\subsection{Rough pairings}

The definition of rough cohomology and the proofs of its basic properties are all due to Mavra \cite{mavra}.

\begin{defn}[Rough cohomology, {\cite[Section 4.2]{mavra}}]
Let $Y$ be a uniformly discrete metric space of bounded geometry.

Let $R^q(Y)$ be the space of all sequences $(\phi_n)_{n \in \IN}$ of functions $Y^{q+1} \to \IC$ satisfying:
\begin{itemize}
\item For all $n \in \IN$ and $R > 0$ the set
\[\supp_R(\phi_n) := \supp(\phi_n) \cap B_R(\Delta_Y^{q+1})\]
is bounded. Here $\Delta_Y^{q+1}$ denotes the multi-diagonal in $Y^{q+1}$.
\item For every $R > 0$ we have
\begin{equation}
\label{eqjk243tre}
\| \phi_n \|_R := \sum_{\bar y \in B_R(\Delta_Y^{q+1})} |\phi_n(\bar y)| \in \ell^\infty(\IN).
\end{equation}
\end{itemize}

Define $R_c^q(Y) := \{ (\phi_n)_{n \in \IN} \in R^q(Y) \colon \text{for all } R > 0 \text{ we have } \|C^1(\phi_n)\|_R \in c_0(\IN)\}$. Here $c_0(\IN)$ denotes the sequences converging to $0$, and $C^1(\largecdot)$ is Ces\'{a}ro $C^1$-summation, i.e., $C^1(\phi_n)$ is the sequence $( \sfrac{1}{n}( \phi_1 + \cdots + \phi_n ) )_{n \in \IN}$.

The rough complex of $Y$ is defined to be $\CR^\ast(Y) := R^\ast(Y) / R^\ast_c(Y)$, where we use the Alexander--Spanier coboundary \eqref{eqjkwe99}. The cohomology of this complex is called the rough cohomology $\HR^\ast(Y)$.
\qed
\end{defn}

Rough cohomology is functorial for rough maps and is invariant under rough equivalences. Therefore we can define $\HR^\ast(X)$ for a metric space $X$ of bounded geometry as $\HR^\ast(Y)$ for any discretization $Y \subset X$.

\begin{defn}[{\cite[Section 4.2]{mavra}}]\label{defnrough9023}
Given any $\phi \in R^q(Y)$ and $\sigma \in \Cuf_q(Y)$ we define
\[\langle \phi, \sigma\rangle_\omega := {\lim}_\omega \sum_{\bar y \in Y^{q+1}} C^1(\phi_n)(\bar y) \cdot \sigma(\bar y),\]
where ${\lim}_\omega \in (\ell^\infty)^\ast$ is a choice of an ultra-limit.

This decends to a bilinear pairing $\langle \cdot, \cdot \rangle \colon \HR^q(Y) \times \Huf_q(Y) \to \IC$.
\qed
\end{defn}

We have a natural map $\HX^\ast(Y) \to \HR^\ast(Y)$ given by considering a coarse cochain $\phi$ as the constant sequence $(\phi)_{n \in \IN}$. It is clear that the pairing $\HX^q(Y) \times \Huf_q(Y) \to \IC$ factors through the pairing $\HR^q(Y) \times \Huf_q(Y) \to \IC$ via this map $\HX^\ast(Y) \to \HR^\ast(Y)$.

We want to use Formula~\eqref{eqjew999} to get a compactly supported differential form out of a function $\phi_n$. Since we assume the manifold $M$ to have bounded geometry, we can choose a partition of unity with the additional property that its derivatives are uniformly bounded. Then we get the estimate $\| c(\phi_n) \|_{L^1} \le D \| \phi_n\|_R$, where the constant $D$ depends only on the chosen partition of unity and $\|-\|_R$ is defined in \eqref{eqjk243tre}. Fixing a choice of an ultra-limit $\omega$, we can therefore define $c_\omega \colon \CR^q(Y) \to ( \Omega_{b,\mathrm{dR}}^{m-q}(M) )^\ast$ by setting
\begin{equation}
\label{eq24354tfsd}
c_\omega(\phi_n)(\alpha) := {\lim}_\omega \int_M \alpha \wedge c(C^1(\phi_n)).
\end{equation}
This descends to classes, i.e., we get a map $c_\omega \colon \HR^q(Y) \to (\HbdR^{m-q}(M))^\ast$.

\begin{thm}\label{thm9999}
Let $M$ be a polynomially $k$-connected manifold of bounded geometry, let $Y \subset M$ be a discretization of $M$, and let $Y$ have polynomial growth.

Then the pairing of $\HR^q(Y)$ with $\Huf_q(Y)$ is continuous for all $q \le k$ and therefore extends to a pairing with $\Hufpol_q(Y)$.
\end{thm}

\begin{proof}
Analogous as the proof of Theorem~\ref{thm:always_cont_pairing}.
\end{proof}

There is of course also an analogue of Theorem~\ref{thmoi2343rew}, which reads exactly the same but the formula is now
\begin{equation}
\label{eq13243reweq}
\langle \phi, (\chi_\ast \circ \ch_\ast)(\mu_\ast^u [P])\rangle_\omega = c_\omega(\phi)(\ind_t (P)).
\end{equation}

\subsection{Applications}\label{secui23ref}

In order to apply the index theorems from the previous two sections, we need interesting ways of constructing coarse, resp.~rough cohomology classes. We will give two example of how to do this in different geometric situations.

\begin{thm}
Let $M$ be a Riemannian manifold of bounded geometry. If $M$ is polynomially contractible (i.e., polynomially $k$-connected for every $k \in \IN$) and has polynomial volume growth and if $D$ is a Dirac operator over $M$, then $\mu_\ast^u [D] \not= 0 \in K_\ast(C_u^\ast M)$.
\end{thm}

\begin{proof}
Since $M^m$ is polynomially contractible, the map $c\colon \HX^\ast(M) \to \HcdR^\ast(M)$ is an isomorphism, Roe \cite[Proposition~3.33]{roe_coarse_cohomology}. So we can find a lift $\Theta \in \HX^m(M)$ of the generator $\vartheta \in \HcdR^m(M)$. Using Theorem~\ref{thmoi2343rew} we find that
\[\langle \Theta, (\chi_\ast \circ \ch_\ast)(\mu_\ast^u [D]) \rangle = \int_M \vartheta = 1,\]
since the degree-zero component of $\ind_t (D)$ is $1 \in \HbdR^0(M)$.
\end{proof}

The following corollary is immediate from the above theorem since changing the metric to a strictly quasi-isometric one (Roe \cite[End of Section 3]{roe_index_1}) induces the identity on $K_\ast(C_u^\ast M)$ and does not change the rough index class of the spin Dirac operator $\slashed D$, and since the metric having uniformly positive scalar curvature implies $\mu_\ast^u [\slashed D] = 0$.

\begin{cor}
Let $M$ be a polynomially contractible manifold of bounded geometry and of polynomial volume growth. Then $M$ does not have a metric of uniformly positive scalar curvature in its strict quasi-isometry class.
\end{cor}

Hanke--Pape--Schick \cite{hanke_pape_schick} proved the following theorem: if $M$ is a closed, connected spin manifold with $\pi_2(M) = 0$, and $N \subset M$ a codimension two submanifold with trivial normal bundle and such that the induced map $\pi_1(N) \to \pi_1(M)$ is injective, then $\alpha(N) \in K_\ast(C^\ast \pi_1(N))$ is an obstruction against the existence of a psc-metric on $M$.

We will generalize this now to higher codimensions, but we will only have the weaker obstruction $\hat{A}(N)$ instead of $\alpha(N)$. We will also have the restriction that $\pi_1(M)$ has to be virtually nilpotent due to the restrictions of our technique, i.e., we need that the universal cover of $M$ satisfies the assumptions of Theorem \ref{thm9999}.

Note that the case that the codimension of $N$ equals the dimension of $M$ is allowed in our next theorem and is nothing else but the well-known conjecture that no closed aspherical manifold admits a psc-metric.

\begin{thm}\label{thm:codim_obstruction}
Let $M$ be a closed, connected manifold with $\pi_1(M)$ virtually nilpotent and $\pi_i(M) = 0$ for $2 \le i \le q$. Assume furthermore that $N \subset M$ is a connected submanifold of codimension $q$ and with trivial normal bundle.

If $\hat A(N) \not= 0$ then $M$ does not admit a metric of positive scalar curvature.
\end{thm}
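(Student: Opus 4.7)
The plan is to argue by contradiction. Suppose $M$ admits a metric of positive scalar curvature, and pull it back to the universal cover $p\colon\widetilde M \to M$. The lifted metric on $\widetilde M$ is of bounded geometry (since $M$ is closed) and has positive scalar curvature uniformly bounded below, so, assuming $M$ is spin (whereby $N$ inherits a spin structure from its trivial normal bundle), Lichnerowicz's formula forces the spin Dirac operator $\slashed D$ on $\widetilde M$ to be invertible, hence $\ind(\slashed D) = 0 \in K_\ast(C_u^\ast(\widetilde M))$. By Example \ref{ex:spaces_polynomial}, the hypotheses on $\pi_1(M)$ and the vanishing of $\pi_2,\ldots,\pi_q$ ensure that $\widetilde M$ is a polynomially $q$-connected manifold of bounded geometry and of polynomial volume growth, so the continuous pairing machinery of Theorems \ref{thm:bounded_below}, \ref{thm:always_cont_pairing} and \ref{thm:multipartitioned} is available in degrees $\le q$.

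The core task is to produce a coarse cohomology class $\phi \in \HX^q(\widetilde M)$ pairing non-trivially against $\chi(\ind(\slashed D))$, which contradicts $\ind(\slashed D) = 0$. Mimicking the strategy of Theorem \ref{thm:multipartitioned}, I would build a polynomially rough map $f\colon\widetilde M \to \IR^q$ having a chosen compact lift $\widetilde N \subset p^{-1}(N)$ as the regular preimage $f^{-1}(0)$, and set $\phi := f^\ast\Theta$ for $\Theta \in \HX^q(\IR^q)$ the standard generator. On a tubular neighbourhood of $\widetilde N$ the map $f$ is just the normal projection coming from the (lifted) trivialization of the normal bundle of $N$; to extend $f$ over the complement so that $|f(x)| \to \infty$ polynomially in $d(x,\widetilde N)$ --- which is what polynomial roughness of $f$ amounts to --- one uses the homotopical $q$-connectedness of $\widetilde M$ to kill the standard obstructions to extending the transverse boundary map $\widetilde N \times S^{q-1} \to S^{q-1}$ with target $\IR^q \setminus \{0\}$, and the polynomial rate of $q$-connectedness to control the growth of $|f|$ quantitatively. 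Once $f$ is built, Theorem \ref{thm:multipartitioned} gives
\[\bigl\langle f^\ast\Theta,\ \chi(\ind(\slashed D))\bigr\rangle \;=\; \ind\!\bigl(D^{\widetilde N}\bigr) \;=\; \hat A[\widetilde N] \;=\; \hat A(N) \ne 0,\]
contradicting the vanishing of the left-hand side.

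The main obstacle is the construction of the polynomially rough $f$: the purely topological extension of the normal projection is standard obstruction theory, but enforcing polynomial roughness simultaneously requires genuinely using the \emph{polynomial} (not merely uniform) $q$-connectedness of $\widetilde M$, which is precisely the content of the hypothesis on the homotopy groups of $M$ together with the virtual nilpotency of $\pi_1(M)$. A secondary subtlety is that the chosen lift $\widetilde N$ must be compact for Theorem \ref{thm:multipartitioned} to apply; this is tacit in the statement and holds, for instance, whenever $\pi_1(N) \to \pi_1(M)$ has finite image, in particular if $N$ is simply connected, or when the codimension equals the dimension so that $N$ is a point --- the latter case specialising to the well-known conjecture that no closed aspherical manifold with virtually nilpotent fundamental group admits a psc-metric.
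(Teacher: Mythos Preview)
Your overall architecture --- pass to the universal cover $\widetilde M$, produce a coarse cohomology class Poincar\'e-dual to a lift of $N$, pair it continuously against $\ind(\slashed D)$, and compute the pairing to be $\hat A(N)$ --- is exactly the paper's. The gap is in how you manufacture that class. You propose to build a polynomially rough map $f\colon\widetilde M\to\IR^q$ and invoke Theorem~\ref{thm:multipartitioned} as a black box, but a rough map in the sense of Definition~\ref{defn:coarse_rough_maps} is in particular a coarse \emph{embedding}: the second condition forces preimages of bounded sets to have \emph{uniformly} bounded diameter. Since $\dim\widetilde M=m>q$ in every nondegenerate case, no such map exists --- already $\IR^m$ does not coarsely embed into $\IR^q$, by asymptotic dimension. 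Your gloss that polynomial roughness ``amounts to $|f(x)|\to\infty$ polynomially in $d(x,\widetilde N)$'' is only polynomial properness, which is strictly weaker than the second roughness condition.

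The paper sidesteps this entirely. Because $\widetilde M$ is uniformly $q$-connected, Roe's character map $c\colon\HX^q(\widetilde M)\to H^q_c(\widetilde M)$ is an isomorphism \cite[Remark after Proposition~3.33]{roe_coarse_cohomology}, so $PD[\overline N]$ lifts \emph{abstractly} to a class $\theta_{\overline N}\in\HX^q(\widetilde M)$; no map to $\IR^q$ is needed. Since $\widetilde M$ itself satisfies the hypotheses of Theorem~\ref{thm:always_cont_pairing}, $\theta_{\overline N}$ pairs continuously against $\ind(\slashed D)$ directly on $\widetilde M$, and the value is computed by the Connes--Moscovici localized index formula to be $\hat A(\overline N)$. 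Your obstruction-theoretic construction, if relaxed to produce merely a proper, uniformly bornologous $f$, would give an explicit representative $f^\ast\Theta$ of this same $\theta_{\overline N}$; but the abstract lift is both easier and sufficient, and the continuity of the pairing must in any case come from Theorem~\ref{thm:always_cont_pairing} applied to $\widetilde M$ rather than from pushing forward to $\IR^q$ as in the proof of Theorem~\ref{thm:multipartitioned}.

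Finally, compactness of the lift is not tacit: the theorem as stated places no hypothesis on $\pi_1(N)\to\pi_1(M)$. The paper treats trivial and finite image as you sketch, but when the image is infinite the lift $\overline N$ is a noncompact cover of $N$, and one must invoke a \Folner argument (Roe's amenable index theorem together with ideas from Mavra's thesis) to recover $\hat A(N)$ from the pairing.
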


\begin{proof}
Assume that $M$ has a metric of positive scalar curvature, and assume furthermore that the inclusion $N \to M$ induces the trivial map $\pi_1(N) \to \pi_1(M)$; we will drop the latter condition later. Let $X$ be the universal cover of $M$ equipped with the pull-back metric and choose an isometric lift $\overline{N} \subset X$ of $N$. The \Poincare dual of $\overline{N}$ defines a class $P \! D[\overline{N}] \in \HcdR^q(X)$ and because $X$ is polynomially $q$-connected by Lemma~\ref{lem:spaces_polynomial} the character map $c\colon\HX^i(X) \to \HcdR^i(X)$ is by Roe \cite[Remark after Proposition 3.33]{roe_coarse_cohomology} an isomorphism for every $i \le q$. So there exists a unique lift of $P \! D[\overline{N}] \in \HcdR^q(X)$ to a class $\theta_{\overline{N}} \in \HX^q(X)$. So by Theorem \ref{thm:always_cont_pairing} the class $\theta_{\overline{N}}$ pairs continuously with $\mu_\ast^u [\slashed D]$, where $\slashed D$ denotes the spin Dirac operator of $X$. Because we assumed $M$ to have positive scalar curvature, $X$ has uniformly positive scalar curvature and therefore the index class vanishes, i.e., $\mu_\ast^u [\slashed D] = 0$. But on the other hand we have by Theorem~\ref{thmoi2343rew}
\[\langle\theta_{\overline{N}}, (\chi_\ast \circ \ch_\ast)(\mu_\ast^u [\slashed D])\rangle = \int_M c(\theta_{\overline{N}}) \wedge \hat{A}_X = \int_M P \! D[\overline{N}] \wedge \hat{A}_X = \hat{A}(\overline{N}) = \hat{A}(N),\]
where the second-to-last equality is due to the triviality of the normal bundle of $\overline{N} \subset X$. So if $\hat{A}(N) \not= 0$, then we must have $\mu_\ast^u [\slashed D] \not= 0$.

In general $\overline{N}$ will be a cover of $N$. Since the group of deck transformations of this cover $\overline{N} \to N$ is a subgroup of $\pi_1(M)$, this deck transformations form also a virtually nilpotent group and therefore especially an amenable one. Hence we can choose a \Folner sequence for $\overline{N}$ compatible with the covering homomorphism, Roe \cite[Proposition~6.6]{roe_index_1}, and interpret their \Poincare duals as a sequence of compactly supported de Rham cohomology classes of degree $q$ such that pairing with bounded de Rham cohomology and taking an ultra-limit ${\lim}_\omega$ gives us a functional which we will denote by $P \! D_\omega[\overline{N}] \in (\HbdR^{m-q}(M))^\ast$. Using again that $X$ is polynomially $q$-connected we can find a rough cohomology class $\theta_{\overline{N}} \in \HR^q(X)$ such that $c_\omega(\theta_{\overline{N}}) = P \! D_\omega[\overline{N}]$, see \eqref{eq24354tfsd}. By Theorem~\ref{thm9999} and Equation~\eqref{eq13243reweq} we then again conclude as in the above special case (which was that $\pi_1(N) \to \pi_1(M)$ is trivial) that $\mu_\ast^u [\slashed D] \not= 0$ if $\hat{A}(N) \not= 0$.
\end{proof}

\section{Homology of the uniform Roe algebra}
\label{seciu234e}

In this section we will show that the character map $\chi_\ast\colon \PHCocont_\ast(\Cpol(M)) \to \Hufpol_\ast(Y)$ is an isomorphism if $M$ has polynomial volume growth, where we are using the trivial line bundle to define the algebra $\Cpol(M)$.

In \cite[Section~2]{yu_cyclic} arguments are given for the map $\PHC_\ast(B_M^u) \to \Huf_\ast(Y)$ being an isomorphism, where $B_M^u$ denotes an uncompleted version of the uniform Roe algebra consisting of finite propagation operators having a uniformly bounded smooth kernel. He also claims an isomorphism $\PHC_\ast(B_M) \cong \HX_\ast(Y)$, where $B_M$ denotes an uncompleted version of the Roe algebra consisting of locally traceable, bounded operators on $L^2(M)$ having finite propagation and $\HX_\ast(Y)$ is the coarse homology of $Y$.

So one might think that all we have to do in this section to prove the isomorphism $\PHCocont_\ast(\Cpol(M)) \cong \Hufpol_\ast(Y)$ would be to show that all the maps that G.~Yu constructs in his arguments are continuous and therefore everything extends to the completions. Unfortunately, the arguments are incomplete and one can actually find counter-examples to the results as stated, so that in our proof in this section we will additionally have to incorporate corrections to the arguments and also add the missing step. Let us explain the objections to the argument in more detail now.\footnote{That the arguments in \cite[Section~2]{yu_cyclic} are insufficient, resp.~not even quite correct, was noticed by Ulrich Bunke and Luigi Caputi, who told it to the author.}

The main problem is that G.~Yu works purely in the algebraic setting (i.e., algebraic cyclic homology is used and no topology is put onto the algebras $B_M^u$ and $B_M$), but this leads to all the following problems. First of all, note that the claim $\PHC_\ast(B_M) \cong \HX_\ast(Y)$ is wrong in general. We can see this by taking $M$ to be a compact manifold: then the claimed isomorphism is that $\PHC_\ast(\Tr(H)) \cong \IC$ for $H$ a separable, infinite-dimensional Hilbert space. But this is only true is we put the trace norm on $\Tr(H)$ and use continuous cyclic homology (see, e.g., Cuntz \cite[Proposition 3.5]{cuntz_encyc_math_sci}). But in the purely algebraic setting $\HC_0(\Tr(H))$ is highly non-trivial (Dykema--Figiel--Weiss--Wodzicki \cite{DFWW}) and the higher cyclic homology groups are unknown. A similar problem occurs in the claim that $\PHC_\ast(B_M^u) \cong \Huf_\ast(Y)$.

Let us explain the missing step in the proposed proof of the above claims and why the author thinks that this missing step can not be carried out in the algebraic setting. In the argument algebraic tensor products $A_0 \otimes \cdots \otimes A_n$ of operators from $B_M^u$ are identified with smooth functions on $M^{2n+2}$ by using kernel representations. This construction works well in one direction, i.e., associating an algebraic tensor product of operators a smooth function on $M^{2n+2}$, but the other direction is completely unclear in the algebraic situation: given a function on $M^{2n+2}$, it seems to the author in general not possible to determine if it comes from a linear combination of algebraic tensor products of kernel operators. In general one needs to pass to a completion of the algebraic tensor product to get the needed surjectivity (see Section~\ref{secu2343rew} for how we solve the problem in this paper). This lifting of functions on $M^{2n+2}$ to algebraic tensor products of kernel operators is the crucial missing step and where the author thinks that it is not possible to accomplish it in general and still stay in the algebraic setting.

The above discussed problems disappear by passing to the topological world, and this is the reason why we are able to carry them out here in this paper. But by passing to completions we loose the finite propagation of the operators and hence immediately end up struggling with Novikov-type problems (which is the reason why we restrict ourselves only to the case of polynomial volume growth). Not loosing finite propagation is probably also exactly the reason why G.~Yu wanted to stay purely in the algebraic world.

Recall Definition~\ref{defnrtzu} of the space of kernels $\WDpol(E \boxtimes E^\ast)$. We write $\WDpol(M \times M)$ in the case of the trivial bundle $E = \IC$. Recall that in Theorem~\ref{thmjnksd9023} we have shown that $\Cpol(M) \xrightarrow{\cong} \WDpol(M \times M)$ if $M$ has polynomial volume growth. Therefore the claimed isomorphism $\PHCocont_\ast(\Cpol(M)) \to \Hufpol_\ast(Y)$ will follow from the next theorem.

\begin{thm}\label{thmi34werd}
Let $M$ be a manifold of bounded geometry with polynomial volume growth, and let $Y \subset M$ be a discretization.

Then the character map $\chi$ from Section~\ref{sec:rough_character} induces an isomorphism
\[\chi_\ast \colon \PHCocont_\ast(\WDpol(M \times M)) \xrightarrow{\cong} \Hufpol_\ast(Y),\]
where $\ast$ is either $\ast = 0$ on $\PHC$-theory and $\ast = \text{even}$ on homology, or it is $\ast = 1$ on $\PHC$-theory and $\ast = \text{odd}$ on homology.
\end{thm}

The proof will occupy the whole of this Section~\ref{seciu234e} and will mainly consist of constructing various maps in order to construct at the end a map which will be an inverse to $\chi$ up to chain homotopy.

We will consider the following factorization (up to the total anti-symmetrization) of the character map $\chi$. We will explain the occuring maps further below. The arrows for the maps $\sigma$ and $\gamma$ are dashed since these maps will not be defined on all of $C_b^\infty(M^{n+1})$, but only on a certain subspace of it (on those functions which are supported in a uniform neighbourhood of the diagonal).
\begin{equation}
\label{eq2w43er2}
\xymatrix{
\WDpol(M \times M)^{\hatotimes (n+1)} \ar[r]^-{\alpha} & C_b^\infty((M\times M)^{n+1}) \ar[r]^-{\beta} & C_b^\infty(M^{n+1}) \ar@{-->}[r]^-{\gamma} \ar@/^1.5pc/[l]_-{\vartheta} \ar@/_2.0pc/@{-->}[ll]^-{\sigma} & \Cuf_n(Y) \ar@/^1.5pc/[l]_-{\omega}
}
\end{equation}

\subsection{The map \texorpdfstring{$\alpha$}{alpha}}

The map $\alpha$ is induced from the map
\begin{align}
\WDpol(M \times M)^{\otimes_\alg (n+1)} & \to C_b^\infty((M\times M)^{n+1}),\label{eq34re2398fd}\\
k_0 \otimes \cdots \otimes k_n & \mapsto \big( ( x_0, x_0^\prime, \ldots, x_n, x_n^\prime ) \mapsto k_0(x_0, x_0^\prime) \cdots k_n(x_n, x_n^\prime) \big)\notag
\end{align}
which is well-defined by the Sobolev embedding theorem if $M$ has bounded geometry.

Recall the definition of the projective tensor product: given normed spaces $E$ and $F$, the projective tensor product norm on $E \otimes_\alg F$ is given by
\begin{equation}
\label{eqjk34r9ij8efd}
\|u\|_{E \hatotimes F} = \inf \{ \sum \|x_i\|_E \|y_i\|_F\},
\end{equation}
where the infimum ranges over all the representations $u = \sum_i x_i \otimes y_i$. If the topologies on $E$ and $F$ are defined by families of semi-norms $(p_i)_{i \in I}$ and $(q_j)_{j \in J}$, respectively, then the projective tensor product topology on $E \otimes_\alg F$ is defined by using the family of semi-norms $(p_i \otimes q_j)_{(i,j) \in I \times J}$, where $p_i \otimes q_j$ is defined by \eqref{eqjk34r9ij8efd}.

By \eqref{eqo24ew} we see that \eqref{eq34re2398fd} is continuous with respect to the projective tensor product topology and hence extends continuously to $\WDpol(M \times M)^{\hatotimes (n+1)}$. This continuous extension is the map $\alpha$.

\begin{lem}\label{lemui2983e}
The maps \eqref{eq34re2398fd} and $\alpha$ are injective.
\end{lem}

\begin{proof}
Since $\WDpol(M \times M)$ is a Hausdorff space, by Treves \cite[Proposition~43.3]{treves} the projective tensor product topology on $\WDpol(M \times M)^{\otimes_\alg(n+1)}$ is also Hausdorff. So
\begin{equation*}
\label{eqiu43er}
\WDpol(M \times M)^{\otimes_\alg(n+1)} \to \WDpol(M \times M)^{\hatotimes(n+1)}
\end{equation*}
is injective\footnote{If the projective tensor product topology on $\WDpol(M \times M)^{\otimes_\alg(n+1)}$ would not be Hausdorff, we would have taken the quotient by the closure of $0$ in the definition of $\WDpol(M \times M)^{\hatotimes(n+1)}$ probably forcing the map \eqref{eqiu43er} to not be injective anymore.} and therefore injectivity of \eqref{eq34re2398fd} follows from injectivity of $\alpha$.

Let us consider the composition
\begin{equation*}
\label{eq453erfd}
\WDpol(M \times M)^{\hatotimes(n+1)} \xrightarrow{\alpha_0} C_b(M \times M)^{\hatotimes(n+1)} \xrightarrow{\alpha_1} C_b((M \times M)^{n+1})
\end{equation*}
and show that both of these maps are injective. Since $\alpha$ is the composition $\alpha_1 \circ \alpha_0$, we can then conclude that $\alpha$ is also injective.

Since $\WDpol(M \times M) \to C_b(M \times M)$ is injective, we get by Treves~\cite[Exercise~43.2]{treves} that the map $\alpha_0$ is also injective.

To prove injectivity of the map $\alpha_1$ we consider the following:
\begin{align*}
C_b(M \times M)^{\hatotimes(n+1)} & \hookrightarrow C_b(M \times M)^{\otimes_{\mathrm{min}}(n+1)} \cong C(\beta_{M \times M})^{\otimes_{\mathrm{min}}(n+1)} \cong C((\beta_{M \times M})^{n+1})\\
& \hookrightarrow C(\beta_{(M \times M)^{n+1}}) \cong C_b((M \times M)^{n+1}),
\end{align*}
where $\beta_{\largecdot}$ denotes the Stone--\v{C}ech compactification and $\otimes_{\mathrm{min}}$ the minimal $C^\ast$-algebra tensor product. We have to justify the first inclusion (the second one comes from the surjection $\beta_{X \times Y} \twoheadrightarrow \beta_X \times \beta_Y$): by Kumar--Sinclair \cite[Theorem 6.1]{kumar_sinclair} the projective tensor product $\hatotimes$ is equivalent to the Haagerup tensor product $\otimes_h$ for infinite-dimensional sub-homogeneous $C^\ast$-algebras (the special case of commutative $C^\ast$-algebras was originally proven by Grothendieck \cite{grothendieck} --- note that the Haagerup tensor product is not a tensor product of $C^\ast$-algebras). Now we use a result of Blecher \cite[Proposition~4.2.1]{blecher} stating that we always have an injection $A \otimes_h B \hookrightarrow A \otimes_{\mathrm{min}} B$ for any $C^\ast$-algebras $A,B$.
\end{proof}

\subsection{The maps \texorpdfstring{$\beta$}{beta} and \texorpdfstring{$\vartheta$}{theta}}

The map $\beta\colon C_b^\infty((M\times M)^{n+1}) \to C_b^\infty(M^{n+1})$ in \eqref{eq2w43er2} is defined in the following way:
\begin{align*}
\beta(f)(x_0, \ldots, x_n) := f(x_n, x_0, x_0, x_1, x_1, \ldots, x_{n-1}, x_{n-1}, x_n).
\end{align*}

To define $\vartheta$ we choose a smooth partition of unity $(g_i)_{i \in I}$ on $M$ such that the derivatives of these functions are uniformly bounded in $i$ and the corresponding covering of $M$ by their supports has finite multiplicity (later we will also want that their supports have uniformly bounded diameters). We then define $\vartheta\colon C_b^\infty(M^{n+1}) \to C_b^\infty((M\times M)^{n+1})$ by
\begin{align*}
\vartheta(& h) (x_0, x_0^\prime, x_1, x_1^\prime, \ldots, x_n, x_n^\prime)\\
& := \sum_{i_0, \ldots, i_n \in I} \sqrt{g_{i_n}(x_0) g_{i_0}(x_0^\prime) g_{i_0}(x_1) g_{i_1}(x_1^\prime) g_{i_1}(x_2) \cdots g_{i_{n-1}}(x_n) g_{i_n}(x_n^\prime)} \cdot h(x_0^\prime, x_1^\prime, \ldots, x_n^\prime).
\end{align*}
This map $\vartheta$ is a continuous right-inverse of the map $\beta$, i.e., we have $\beta \circ \vartheta = \id$.

\subsection{The maps \texorpdfstring{$\gamma$}{gamma} and \texorpdfstring{$\omega$}{omega}}

The map $\gamma\colon C_b^\infty(M^{n+1}) \dashrightarrow \Cuf_n(Y)$ is defined by
\[\gamma(f)(y_0, \ldots, y_n) := \int_{V_{y_n}} \hspace*{-0.7em} dx_n \cdots \int_{V_{y_0}} \hspace*{-0.7em} dx_0 \ f(x_0, \ldots, x_n),\]
where the subsets $V_y \subset M$ are defined in \eqref{eqjkb23wfe}. The arrow is dashed since the domain of $\gamma$ consists only of all the functions which are supported in a uniform neighbourhood of the diagonal. The map $\gamma$ is continuous if we put the norm $\|-\|_{\infty,0}$ from Definition~\ref{defn_pol_chains} on $\Cuf_n(Y)$. If we put any other norm $\|-\|_{\infty,n}$ on $\Cuf_n(Y)$ for $n \ge 1$, then $\gamma$ will not be continuous (provided $M$ is non-compact).

To define $\omega\colon \Cuf_n(Y) \to C_b^\infty(M^{n+1})$ we choose smooth positive functions $(\phi_y)_{y \in Y}$ with the following properties: $\supp \phi_y \subset V_y$ for $V_y$ as in \eqref{eqjkb23wfe}, $\int \phi_y = 1$, and all their derivatives are bounded uniformly in $y$. Given now a uf-chain $c = \sum_{\bar y \in Y^{n+1}} a_{\bar y} {\bar y} \in \Cuf_n(Y)$ we define a function on $M^{n+1}$ by
\[(x_0, \ldots, x_n) \mapsto \sum_{\bar y \in Y^{n+1}} a_{\bar y} \cdot \phi_{y_1}(x_0) \phi_{y_0}(x_1) \cdots \phi_{y_n}(x_n),\]
where we have written $\bar y = (y_0, \ldots, y_n)$. Since the subsets $V_y$ are mutually disjoint from each other (up to sets of measure zero), we get that $\omega$ is a right-inverse of $\gamma$, i.e., we have $\gamma \circ \omega = \id$. The map $\omega$ is continuous if we put on $\Cuf_n(Y)$ the \Frechet topology from Definition~\ref{defn_pol_chains} (we actually just have to put only the norm $\|-\|_{\infty,0}$ on $\Cuf_n(Y)$ to get this continuity).

\subsection{The map \texorpdfstring{$\sigma$}{sigma}}
\label{secu2343rew}

Note that the map $\alpha$ is not surjective. It is likely not even surjective onto the image of $\vartheta \circ \omega$ (in the case $n \ge 1$), which is contained in the space of all functions supported in a uniform neighbourhood of the diagonal in $(M\times M)^{n+1}$ (provided we have chosen the partitions of unity in the definition of $\vartheta$ such that they have uniformly bounded diameters). But we will show that the composition $\beta \circ \alpha$ is surjective onto the subspace of $C_b^\infty(M^{n+1})$ consisting of all functions supported in a uniform neighbourhood of the diagonal in $M^{n+1}$ (this subspace contains the image of $\omega$). We will show this surjectivity by constructing the map $\sigma$ which will be defined on this subspace (i.e., $\sigma$ will not be defined on all of $C_b^\infty(M^{n+1})$ which is the reason why its arrow in \eqref{eq2w43er2} is dotted).

Fix an $R > 0$. We define $\sigma$ now on the subspace of $C_b^\infty(M^{n+1})$ consisting of functions which are supported in the $R$-neighbourhood of the diagonal. Let $g \in C_b^\infty(M^{n+1})$ be such a function. We first apply $\vartheta$ to $g$ in order to get a function $f := \vartheta(g)$ on $C_b^\infty((M\times M)^{n+1})$ which is supported on a uniform neighbourhood of the diagonal (we assume that the partition of unity we picked for the definition of $\vartheta$ consists of functions whose supports have uniformly bounded diameters --- by enlarging $R$ if needed, we can without loss of generality assume that these diameters are bounded from above by $R$).

Since $M$ has bounded geometry we can find a uniformly locally finite covering $\{U_i\}_{i \in I}$ of $M$ by open subsets of uniformly bounded diameters such that the Lebesgue number of this covering is at least $R$. The choice of Lebesgue number ensures that the support of $f$ is contained in $\bigcup_{i \in I} (U_i \times U_i) \times \cdots \times (U_i \times U_i)$.

Note that $\{V_i := (U_i \times U_i) \times \cdots \times (U_i \times U_i)\}_{i \in I}$ is a uniformly locally finite covering of the $R$-neighbourhood of the diagonal of the manifold $(M\times M)^{n+1}$, which has bounded geometry. Therefore we can find a subordinate partition of unity $\{h_i\}$ with uniformly bounded derivatives.

On each $V_i$ we can find $k_i \in \WDpol(U_i \times U_i)^{\hatotimes(n+1)}$ with $\alpha(k_i) = h_i f$ (this equality only holds on $V_i$) since $U_i$ is compact.\footnote{This is the step which crucially needs that we work in the completion of the projective tensor product, i.e., $k_i$ will in general not live in the algebraic tensor product.}
The expression $k := \sum k_i$ (we will explain this in more detail in the following paragraph) defines an element from $\WDpol(M \times M)^{\hatotimes(n+1)}$ since the sets $U_i$ have uniformly bounded diameters and the norms of the functions $k_i$ are uniformly bounded. Now this $k$ is the sought preimage under $\alpha$ of the function $f = \vartheta(g)$, where the equality only holds on the $R$-neighbourhood of the diagonal of $(M \times M)^{n+1}$. Since the support of $g$ is contained in the $R$-neighbourhood of the diagonal in $M^{n+1}$, we have $(\beta \circ \alpha)(k) = g$.

What we mean in the previous paragraph by $\sum k_i$ is the following: suppose that we have $k_i = k_{i,0} \otimes \cdots \otimes k_{i,n}$ for each $i$ (i.e., for simplicity we assume here that each $k_i$ is an elementary tensor). Then we mean by $\sum k_i$ the expression
\[\sum k_{i,0} \otimes \cdots \otimes \sum k_{i,n}.\]
Note that cross terms like $k_{i,0} \otimes k_{i^\prime,1} \otimes \cdots$ are responsible for the equality $f = \vartheta(g)$ to hold only on the $R$-neighbourhood of the diagonal of $(M \times M)^{n+1}$, i.e., they are the reason why this construction here does not provide a right-inverse to $\alpha$, but only to $\beta \circ \alpha$ on the subspace of functions supported on a uniform neighbourhood of the diagonal. We assumed here that $k_i$ is an elementary tensor. In general, up to an $\varepsilon$, the element $k_i$ will be a finite sum of elementary tensors. Here now we need bounded geometry of $M$ in order to have a bound (independent of $i$) on the number of summands.

Now if we change our fixed value of $R$, the choice of covering $\{U_i\}_{i \in I}$ or choice of partition of unity $\{h_i\}$, get a priori different lifts $k$ and $k^\prime$. But one can see that we have $\alpha(k) = \alpha(k^\prime)$ and therefore, since $\alpha$ is injective by Lemma~\ref{lemui2983e}, we get $k=k^\prime$. Therefore we can define $\sigma$ on the whole subspace of all functions in $C_b^\infty(M^{n+1})$ which are supported in some uniform neighbourhood of the diagonal.

The composition $\sigma \circ \omega$ is continuous if the manifold $M$ has polynomial volume growth. Recall from Definition~\ref{defnrtzu} of the norms on $\WDpol(M \times M)$ that they are a supremum over polynomially weighted $L^1$-norms, whereas the norms on $C_n^\uf(Y)$ are by Definition~\ref{defn_pol_chains} a supremum over polynomially weighted $L^\infty$-norms --- so we necessarily need polynomial growth of $M$ to conclude continuity of $\sigma \circ \omega$ (the norms on $\WDpol(M \times M)$ also take the derivatives of the kernels into account, but this is no problem for the continuity of $\sigma \circ \omega$ since in the definitions of both $\omega$ and $\sigma$ we are choosing partitions of unity with uniformly bounded derivatives).

\subsection{Proof of Theorem~\ref{thmi34werd}}

Let us prove Theorem~\ref{thmi34werd}. As usual, it suffices to prove that we have isomorphisms on Hochschild homology (see, e.g., Loday~\cite[Corollary~2.2.3]{loday_cyclic_hom}).

If we restrict the domain of $\alpha$ to elementary tensors $k_0 \otimes \cdots \otimes k_n$, where each kernel function $k_i$ is supported in a uniform neighbourhood of the diagonal, then the composition $\gamma \circ \beta \circ \alpha$ is the character map $\chi$ (up to an identification of the domain using Theorem~\ref{thmjnksd9023}). From Lemma~\ref{lem982u3iwed} we know that the map $\chi$ is continuous and therefore extends to a map $\chi\colon \WDpol(M \times M)^{\hatotimes (n+1)} \to \Cufpol_n(Y)$.

We already know that the composition $\sigma \circ \omega$ is continuous on $\Cuf_n(Y)$ if $M$ has polynomial volume growth, and therefore it extends to a map on $\Cufpol_n(Y)$ which we will also denote by $\sigma \circ \omega$. It is quickly verified that this is a chain map. Since by construction $\omega$ is a right-inverse to $\gamma$ and $\sigma$ is a right-inverse to $\beta \circ \alpha$, we conclude that the composition $\chi \circ (\sigma \circ \omega)$ is the identity on $\Cufpol_n(Y)$.

We want to prove that $\sigma \circ \omega$ is also a left-inverse to $\chi$, but only up to chain homotopy this time. This would finish the proof of Theorem~\ref{thmi34werd}. Now G.~Yu already constructed a chain homotopy $D$ in his proposed proof \cite[Page~450]{yu_cyclic} and fortunately for us, the formulas also make sense if interpreted as if defined on $\WDpol(M \times M)^{\hatotimes (n+1)}$ (we have to regard the occuring products $\psi_{\gamma_i}(x_i^\prime) \psi_{\gamma_i}(x_{i+1})$ as the single kernel function $\psi_{\gamma_i} \otimes \psi_{\gamma_i}$). So basically we are defining our chain homotopy as the composition of $\alpha$ with G.~Yu's chain homotopy and then we lift the result back to $\WDpol(M \times M)^{\hatotimes (n+1)}$. We see that this chain homotopy is continuous and therefore extends to all of $\WDpol(M \times M)^{\hatotimes (n+1)}$ (to be very concrete, our recipe for the chain homotopy a priori only works on the algebraic tensor product of finite propagation kernels). Now the fact that this chain homotopy does the job follows from the corresponding computation of G.~Yu together with the map $\alpha$ being injective (Lemma~\ref{lemui2983e}).

\section{Final remarks and open questions}\label{sec:final_remarks}

Let us collect in this last section some open questions arising out of this paper and which the author thinks are worth persuing.

\subsection{Homological rough Novikov conjecture}

We have shown in Section~\ref{secdefnonrms} that $\Huf_\ast(Y) \cong \Hufpol_\ast(Y)$ if $Y \subset X$ is the vertex set of a simplicial complex $X$ of bounded geometry and of polynomial growth and if we assume that $X$ is polynomially contractible. We needed the polynomial contractibility, because in our proof we compared with $H_\ast^\infty(X)$ which sees the topology of $X$. But both $\Huf_\ast(Y)$ and $\Hufpol_\ast(Y)$ are quasi-isometry invariants of $Y$ and the question whether $\Huf_\ast(Y) \to \Hufpol_\ast(Y)$ is an isomorphism makes sense without mentioning any topology at all. Therefore we would actually expect that we should not need the assumption that $X$ is polynomially contractible (and we should actually not even need to assume that there is such a space $X$ at all in the background).

\begin{question}\label{quesjkn243re}
Let $Y$ be uniformly discrete metric space of bounded geometry and let it have polynomial volume growth. Is $\Huf_\ast(Y) \to \Hufpol_\ast(Y)$ an isomorphism?
\end{question}

Another approach to investigate polynomially decaying uf-homology is to show a result analogous to \cite[Lemma 9.5]{roe_index_coarse}:

\begin{question}
Is the functor $Y \mapsto \Hufpol_\ast(\mathcal{O}Y)$ a reduced homology theory on the category of finite polyhedra? ($\mathcal{O}Y$ denotes the Euclidean cone over $Y$.)
\end{question}

\subsection{Sparse index classes}

Theorem A states that under certain assumptions on $M$ coarse and rough cohomology pair continuously with the $K$-theory of the uniform Roe algebra. This pairing factors via the character map $\chi_\ast$ through $\Hufpol_\ast(Y)$ for $Y \subset M$ a discretization. But $\Hufpol_\ast(Y)$ is usually a non-Hausdorff space which means that, due to the continuity of the pairings, elements in the closure $\overline{\{0\}}\subset \Hufpol_\ast(Y)$ can not be detected by these pairings. Combining Theorem F with Theorem H we get the isomorphism $K_\ast(C_u^\ast(M)) \barotimes \IC \cong \Hufpol_\ast(Y)$ and therefore we know that there are elements in $K_\ast(C_u^\ast(M)) \barotimes \IC$ not detectable by coarse or rough index pairings. The question is now whether we can find such elements already in $K_\ast(C_u^\ast(M)) \otimes \IC$ or if they arise only through the procedure of taking the completion in the tensor product, and if there are such elements, how to characterize them.

\begin{question}
Are there elements in $K_\ast(C_u^\ast(M)) \otimes \IC$ which are mapped to elements in the closure of $\{0\}$ in $\Hufpol_\ast(Y)$?

If yes, can we characterize these elements and what is their index theoretic value?
\end{question}

\subsection{Assembly map on the level of cyclic (co-)homology}
\label{subsec234ter}

The left vertical map in Diagram \eqref{eq:main_diag}, i.e., the uniform homological Chern character, was defined by the author \cite[Section 5]{engel_indices_UPDO} by factoring it through the continuous periodic cyclic cohomology $\HPucont^\ast(W^{\infty,1}(M))$ of $W^{\infty,1}(M)$ which denotes the Sobolev space on $M$ of infinite order and $L^1$-integrability. So the question whether there is a rough assembly map on the level of cyclic (co-)homology immediately arises.

\begin{question}
Is there an assembly map $\HPucont^\ast(W^{\infty,1}(M)) \to \HP_\ast(\IC^\ast_{-\infty}(M))$?

If $M$ is equicontinuously contractible, is this map an isomorphism?
\end{question}

The next question is then if the analogues of our results from Section~\ref{secdefnonrms} are true:

\begin{question}
If $M$ is a polynomially contractible manifold of bounded geometry and of polynomial growth, is then $\HPucont^\ast(W^{\infty,1}(M)) \to \PHCocont_\ast(\Cpol(M))$ an isomorphism?

If $M$ is of bounded geometry and of polynomial volume growth, is then the completion map $\PHC_\ast(\IC_{-\infty}^\ast(M)) \to \PHCocont_\ast(\Cpol(M))$ an isomorphism?
\end{question}

One has to be a bit cautious with the above questions, since we stated them using the \emph{algebraic} periodic cyclic homology of $\IC_{-\infty}^\ast (M)$. It might turn out that this is maybe not appropriate due to reasons explained at the beginning of Section~\ref{seciu234e}.

\subsection{Rough assembly map}

In the previous subsection we asked about an assembly map on the level of cyclic homology in order to have a more complete picture of Diagram \eqref{eq:main_diag}. But in this diagram is another deficiency: we expect that the rough assembly map $\mu_\ast^u$ factors through $K_\ast^\alg(\uRoe(E))$.

\begin{question}
Does the rough assembly map factor through $K_\ast^\alg(\uRoe(E))$?

If yes, is the resulting algebraic rough assembly map $K_\ast^u(M) \to K_\ast^\alg(\uRoe(E))$ always an isomorphism when $M$ is equicontinuously contractible?
\end{question}

Theorem H states the isomorphism $K_\ast(C_u^\ast(M)) \barotimes \IC \xrightarrow{\cong} \PHCocont_\ast(\Cpol(M))$ if $M$ is polynomially contractible and of polynomial volume growth. But this result should be independent of the topological assumption on $M$ that it is polynomially contractible; compare this to Theorem F and to Question~\ref{quesjkn243re}.

\begin{question}
Let $M$ be a manifold of bounded geometry and of polynomial growth.

Do we have an isomorphism $\ch_\ast\colon K_\ast(C_u^\ast(M)) \barotimes \IC \xrightarrow{\cong} \PHCocont_\ast(\Cpol(M))$?
\end{question}

\bibliography{./Bibliography_Rough_index_theory_polynomial}
\bibliographystyle{amsalpha}

\end{document}